\documentclass[11pt]{article}
\usepackage[margin=1in]{geometry}
\usepackage{amsmath,amssymb,amsthm,mathtools,bm}
\usepackage{booktabs,longtable,array}
\usepackage{microtype}
\usepackage{enumitem}
\usepackage{hyperref}
\usepackage{xcolor}
\usepackage{graphicx}
\usepackage{authblk}
\usepackage{algorithm}
\usepackage{algpseudocode}
\usepackage[numbers,sort&compress]{natbib}
\hypersetup{colorlinks=true,linkcolor=blue,citecolor=blue,urlcolor=blue}

\newtheorem{definition}{Definition}[section]
\newtheorem{proposition}[definition]{Proposition}
\newtheorem{theorem}[definition]{Theorem}
\newtheorem{lemma}[definition]{Lemma}
\newtheorem{corollary}[definition]{Corollary}
\newtheorem{remark}[definition]{Remark}
\newtheorem{assumption}[definition]{Assumption}
\newtheorem{principle}[definition]{Design Principle}

\newcommand{\Rel}{\mathbf{Rel}}
\newcommand{\Vect}{\mathbf{Vect}}

\newcommand{\Cl}{\operatorname{Cl}}

\newcommand{\Path}{\operatorname{Path}}
\newcommand{\Rep}{\operatorname{Rep}}
\newcommand{\Hom}{\operatorname{Hom}}
\newcommand{\Ext}{\operatorname{Ext}}
\newcommand{\im}{\operatorname{im}}
\newcommand{\id}{\operatorname{id}}

\newcommand{\Score}{\operatorname{Score}}

\newcommand{\Prov}{\operatorname{Prov}}
\newcommand{\At}{\operatorname{At}}
\newcommand{\Reach}{\operatorname{Reach}}
\newcommand{\Iso}{\operatorname{Iso}}
\newcommand{\Energy}{\mathcal{E}}
\newcommand{\KG}{\mathcal{K}}
\newcommand{\T}{\mathcal{T}}
\newcommand{\A}{\mathcal{A}}
\newcommand{\Lcal}{\mathcal{L}}
\newcommand{\F}{\mathcal{F}}
\newcommand{\M}{\mathcal{M}}

\title{PAGR: Proof-Carrying Algebraic-Geometric Retrieval:\\
A Quiver-, Provenance-, and Sheaf-Theoretic Framework for Grounded LLM Retrieval}
\author[1]{Xingting Wang}
\author[2]{Min Wu}
\affil[1]{Department of Mathematics, Louisiana State University, Baton Rouge, LA 70803, USA \quad \texttt{xingtingwang@lsu.edu}}
\affil[2]{Independent Researcher \quad \texttt{min.wu.2020@gmail.com}}

\date{September 2026}

\begin{document}
\maketitle

\begin{abstract}
Retrieval-augmented generation (RAG) is usually formulated as a statistical information-retrieval problem: a query is embedded, nearby text is retrieved, and a language model is conditioned on the result. Graph-based RAG adds relational structure, but the mathematical status of that structure is often underspecified. In particular, three distinct questions are frequently conflated: which statements are certified as knowledge, which latent representations are useful for retrieval, and which multi-hop compositions are semantically admissible.

We propose a use-case-agnostic framework called \emph{Proof-Carrying Algebraic-Geometric Retrieval} (PAGR), which separates and then couples these questions through four mathematical layers. The \emph{symbolic layer} is a finite many-sorted relational theory generated by a typed quiver, path equations, and positive Horn inclusions. The \emph{algebraic representation layer} maps the quiver into finite-dimensional inner-product spaces, so relation types act as linear operators and compositional constraints become operator identities or regularizers. The \emph{geometric layer} equips entity types with metric or mixed-geometry representation spaces for semantic seeding and inductive retrieval. The \emph{sheaf layer} encodes local compatibility and supplies a local-to-global consistency energy. A fifth, orthogonal mechanism---semiring provenance---attaches machine-checkable derivation certificates to symbolic inferences.

The central architectural constraint is an epistemic separation principle: learned geometry may rank, propose, and organize evidence, but it may not promote a hypothesis into certified ground truth. Certification is reserved for source-attested base facts and derivations admitted by a sound symbolic theory. We formalize this distinction and prove four results: a \emph{non-interference} theorem stating that the certification boundary---the criterion by which a retrieved fact qualifies as certified---is invariant under arbitrary replacement of every learned component, even though which certified facts are retrieved is not, and from which soundness relative to an intended model follows as a corollary; a conditional \emph{completeness} bound that characterizes exactly which certified facts geometric seeding can fail to reach; a gauge-invariance theorem identifying the isometry group---rather than the full base-change group---as the symmetry that retrieval scores actually respect; and a proposition showing that zero sheaf energy on a retrieved subgraph forces all declared path equations to hold along it, which couples the sheaf and representation layers instead of leaving them parallel. Three further groups of results sharpen and delimit the construction. Sheaf cohomology separates whether a particular retrieved configuration is consistent from whether any consistent configuration exists at all---the latter measured by the dimension of global sections, and assembled across overlapping neighborhoods by a Mayer--Vietoris gluing theorem. Bounded bisimulation yields a minimal index that decides admissible-path realizability exactly, while provenance decides which base facts are redundant. Finally, a scope theorem gives a structural criterion for when a schema leaves the finite and tame representation regimes, delimiting which classical tools the algebraic layer may legitimately invoke. We then define a hybrid retrieval operator combining metric seeding, admissible path expansion, provenance, representation residuals, sheaf consistency, and graph diffusion. The result is a mathematical blueprint for LLM retrieval systems in which geometry improves recall and organization while algebra and proof constrain what may be treated as ground truth.
\end{abstract}

\section{Introduction}

Large language models possess broad parametric knowledge but provide weak guarantees about factuality, attribution, freshness, or the provenance of individual claims. Retrieval-augmented generation \citep{lewis2020} addresses this by conditioning generation on external evidence. Modern graph-based variants extend retrieval from isolated chunks to relational neighborhoods, subgraphs, and multi-hop structures \citep{edge2024,han2025,hu2024,zhu2025,gutierrez2024}. This is a substantial improvement for questions whose answers depend on relational context. Yet it leaves a foundational issue unresolved: a graph edge can mean at least three different things.

First, an edge may be a \emph{certified assertion}: a source explicitly states a relation. Second, an edge may be a \emph{logical consequence}: it follows from certified assertions under a declared rule system. Third, an edge may be a \emph{statistical hypothesis}: a learned model predicts that the relation is likely. These objects are useful for different purposes, but treating them as interchangeable makes the phrase ``knowledge graph'' mathematically ambiguous and makes ``grounded generation'' difficult to define.

The same ambiguity appears in geometry. Knowledge-graph embedding (KGE) methods associate entities and relations with vectors, rotations, projections, tensors, or transformations and can model symmetry, inversion, hierarchy, and composition \citep{bordes2013,trouillon2016,sun2019,li2024golde,yusupov2025}. These representations are extremely useful for retrieval and link prediction. But a small embedding distance is not a proof that a relation is true. Conversely, a symbolic graph with no geometry can be exact but brittle: it may fail to retrieve semantically relevant regions when the user's language does not match the graph's vocabulary.

This paper develops a mathematical framework in which symbolic and geometric knowledge coexist without being conflated. The framework is deliberately independent of any particular domain. It can represent scientific claims, software dependencies, corporate knowledge, legal authorities, biomedical facts, product catalogs, educational concepts, or any other corpus for which typed relations and source evidence exist.

Our proposal uses several established theories, but assigns each a sharply delimited role:
\begin{enumerate}[leftmargin=*]
    \item \textbf{Typed quivers and path algebras} represent the grammar of relation composition.
    \item \textbf{Positive Horn rules and relational fixed points} define certified symbolic closure.
    \item \textbf{Quiver representations} assign vector spaces to types and linear maps to relation generators, making compositional geometry explicit.
    \item \textbf{Metric and mixed-geometry embeddings} support semantic seeding and inductive ranking.
    \item \textbf{Cellular sheaves} encode local compatibility and quantify whether local representations glue coherently.
    \item \textbf{Semiring provenance} records derivations independently of any one numerical confidence convention.
    \item \textbf{Proof-carrying admission} requires a checkable witness before an inferred statement may enter the certified retrieval context.
\end{enumerate}

None of these ingredients is new in isolation. Quiver representations are classical, have been used to analyze neural architectures \citep{armenta2020,ganev2022}, and admit geometric moduli-space formulations \citep{jeffreys2021}. Cellular sheaves have recently been used to generalize graph neural networks and knowledge-graph embeddings \citep{hansen2020,gebhart2023,braithwaite2024}. Provenance semirings provide a general algebra of derivations \citep{green2007,grahne2020}. The contribution here is a unified epistemic architecture for retrieval: symbolic truth, learned geometry, relational algebra, local consistency, and generative synthesis are explicitly separated and connected by typed interfaces.

\paragraph{Core thesis.} A mathematically trustworthy retrieval system should use geometry to answer \emph{where should we look?}, algebra and logic to answer \emph{what compositions are admissible?}, provenance to answer \emph{why is this conclusion present?}, and a generator only to answer \emph{how should the certified context be communicated?}

\section{Related Work and Positioning}

\subsection{Graph-based retrieval augmented generation}
GraphRAG systems retrieve or construct graph-structured context rather than treating text chunks as independent units. Recent frameworks build LLM-derived entity graphs and summarize communities for global queries \citep{edge2024}, retrieve textual subgraphs \citep{hu2024}, expand seed chunks using graph neighborhoods \citep{zhu2025}, or run personalized PageRank from query-linked seeds over an extracted knowledge graph \citep{gutierrez2024}; \citet{han2025} survey the space. Our Stage V is deliberately the same diffusion primitive used by \citet{gutierrez2024}, which makes the comparison sharp: the difference is not the diffusion but what the diffused subgraph is permitted to license. We adopt the high-level GraphRAG motivation but focus on a different problem: the semantics of graph edges and the conditions under which graph-derived information is admissible as ground truth.

\subsection{Knowledge graph embeddings and relational geometry}
Latent KGE models encode relational regularities using algebraic transformations. TransE models relations as translations \citep{bordes2013}; ComplEx uses complex-valued factorization and can capture symmetric and antisymmetric relations \citep{trouillon2016}; RotatE models relations as complex rotations and supports inversion and composition patterns \citep{sun2019}. More recent approaches use hyperbolic geometry for hierarchy \citep{chami2020}, higher-dimensional orthogonal parameterizations, or mixed Euclidean/hyperbolic geometry \citep{li2024golde,yusupov2025}. Theoretical work also shows that not every relation is equally representable under simple Euclidean embedding families \citep{bhattacharjee2020}. Our framework therefore treats the choice of geometry as a model class, not as an ontological commitment.

\subsection{Quivers, representations, and machine learning}
A quiver is a directed multigraph; a representation assigns a vector space to each vertex and a linear map to each arrow \citep{derksen2017,schiffler2014}. Neural networks themselves can be described using quiver representations augmented with nonlinearities \citep{armenta2020,ganev2022}. We instead apply quiver representation theory to the \emph{knowledge schema}: entity types are vertices, relation types are arrows, and learned relation operators form a representation of that schema.

\subsection{Sheaf-theoretic knowledge representations}
Cellular sheaves attach vector spaces to cells and restriction maps to incidences; the spectral theory of the associated sheaf Laplacian is developed by \citet{hansen2019spectral}. Sheaf neural networks replace ordinary graph diffusion by a sheaf Laplacian, allowing edge-dependent and asymmetric compatibility structure \citep{hansen2020,bodnar2022,braithwaite2024}. Knowledge Sheaves formulates KGE as approximate global-section finding and uses schema constraints to model composite relations \citep{gebhart2023}. Section~\ref{sec:sheaf} makes the relationship to our representation layer explicit: the quiver representation is recovered as the special case in which restriction maps are \emph{tied by relation type} rather than free per edge, and we show that this tying makes sheaf consistency imply representation-layer compositional consistency. We keep the sheaf layer epistemically subordinate to certified symbolic facts: sheaf consistency is evidence of representational coherence, not itself a proof of truth.

\subsection{Provenance and proof-carrying computation}
Provenance semirings annotate base facts and propagate annotations through query derivations \citep{green2007}; see \citet{abiteboul1995} for the underlying Datalog fixed-point semantics. Extensions address regular path queries and recursive Datalog semantics \citep{grahne2020,bourgeois2022,khamis2021}, and $m$-semirings equip the framework with a monus operator supporting deletion \citep{geerts2010}. Our admission principle is conceptually related to proof-carrying code \citep{necula1997}: a consumer accepts an object only together with a mechanically checkable witness that it satisfies a declared policy. Here the object is an inferred fact rather than executable code. The invariance form of our central theorem (Section~\ref{sec:noninterference}) is a non-interference statement in the sense of \citet{goguen1982}, with epistemic status playing the role of a security level.

\subsection{Positioning against concurrent neuro-symbolic retrieval}
\label{sec:concurrent}
Two concurrent lines of work share our vocabulary and must be distinguished explicitly. \citet{koomullil2026} propose proof-carrying certificates for LLM pipelines with a Lean~4 trust boundary, verifying the deterministic computation \emph{surrounding} a model, including bilattice grounding and embedding-sensitivity certificates. That work is about kernel-checkable certificates for pipeline steps; ours is about the algebraic and geometric structure of the knowledge object being retrieved, and in particular about which multi-hop compositions are admissible. The two are complementary: a PAGR certificate $\delta$ is exactly the kind of artifact a Lean-style checker would consume. \citet{gann2026} synthesize attributable Prolog modules from retrieved chunks and obtain execution traces linking reasoning steps to sources; this realizes the provenance and certificate layers empirically, but does not separate the representation and consistency layers, does not type the composition grammar with a schema quiver, and does not address the geometric seeding stage. We regard NeSy-RAG as the closest empirical instantiation of Stages~II--III and a natural baseline for the hypotheses in Section~\ref{sec:hypotheses}. Finally, citation-level generation evaluation \citep{gao2023} measures whether a generator's claims are supported by its context; our Design Principles constrain what enters that context in the first place, so the two are orthogonal and should be reported together.

\section{Mathematical Preliminaries}

\subsection{Typed quivers and the free path category}

\begin{definition}[Typed schema quiver]
A \emph{schema quiver} is a finite directed multigraph
\[
Q=(Q_0,Q_1,s,t),
\]
where $Q_0$ is a finite set of entity sorts, $Q_1$ is a finite set of primitive relation types, and $s,t:Q_1\to Q_0$ assign the source and target sort of each relation.
\end{definition}

A path $p=a_n\cdots a_1$ is a composable sequence of arrows with $t(a_i)=s(a_{i+1})$. The length-zero path at $i\in Q_0$ is denoted $e_i$. Paths form the free category $\Path(Q)$ under concatenation.

Fix a field $k$, usually $\mathbb{R}$ or $\mathbb{C}$.

\begin{definition}[Path algebra]
The \emph{path algebra} $kQ$ is the $k$-vector space with basis all finite paths in $Q$, equipped with bilinear multiplication given by concatenation when paths are composable and zero otherwise.
\end{definition}

The idempotents $e_i$ satisfy $e_i^2=e_i$ and $e_ie_j=0$ for $i\neq j$. They encode the type decomposition algebraically.

\subsection{Equations versus inclusions}
Two kinds of semantic constraint must be separated.

\paragraph{Path equations.} If two compositions are intended to denote the same operation, we may impose $p=q$. Let $J\triangleleft kQ$ be the ideal generated by differences $p-q$. Then
\[
\A_Q=kQ/J
\]
is the corresponding quotient of the path algebra. This is the correct use of quotient/path-algebra machinery.

\begin{remark}[Admissibility]
\label{rem:admissible}
The classical theory of \emph{bound quiver algebras} requires $J$ to be admissible, i.e.\ $R^m\subseteq J\subseteq R^2$ for some $m$, where $R\triangleleft kQ$ is the arrow ideal \citep{derksen2017,schiffler2014}. Admissibility guarantees, among other things, that $kQ/J$ is finite dimensional and that $Q$ is recovered as its Gabriel quiver. Some schema constraints of practical interest violate this condition. In particular the inverse-pair equations $ba=e_i$, $ab=e_j$ of Section~\ref{sec:constraint-reg} involve length-zero paths and so generate a non-admissible ideal; $kQ/J$ is then a groupoid-algebra-like quotient rather than a bound quiver algebra. Nothing in this paper requires admissibility---we only ever use $\A_Q$ through the factorization criterion of Proposition~\ref{prop:factorization}, which holds for an arbitrary two-sided ideal---but the classification results of representation theory should not be invoked unless $J$ is admissible. We therefore write ``quotient path algebra'' rather than ``bound path algebra'' throughout.
\end{remark}

\paragraph{Relational inclusions.} A rule such as
\[
R_1;R_2\subseteq R_3
\]
does \emph{not} assert equality of paths. It is a positive Horn implication. Such rules are handled in a relational semantics rather than by quotienting $kQ$.

\begin{principle}[Equation--inclusion separation]
\label{prin:eq-incl}
Use path-algebra quotients only for identities. Use Horn closure for one-way semantic implications. Conflating the two changes the meaning of the knowledge system.
\end{principle}

\subsection{Relational structures}
For each sort $i\in Q_0$, let $P_i$ be a finite set of entities. For an arrow $a:i\to j$, a concrete relation is
\[
E_a\subseteq P_i\times P_j.
\]
Composition is ordinary relational composition. The collection of sets and relations lives in the locally ordered category $\Rel$, where hom-sets are ordered by inclusion.

\section{The PAGR Framework}

\begin{definition}[PAGR knowledge system]
\label{def:pagr}
A \emph{PAGR knowledge system} is a tuple
\[
\KG=(Q,P,E,\T,J,\Phi,\rho,\F,\eta),
\]
where:
\begin{enumerate}[leftmargin=*]
    \item $Q$ is a finite schema quiver;
    \item $P=\bigsqcup_{i\in Q_0}P_i$ is the finite typed entity universe;
    \item $E=\{E_a\}_{a\in Q_1}$ is the extensional base relation set;
    \item $\T$ is a finite set of positive Horn relation rules;
    \item $J$ is an optional ideal of path equations;
    \item $\Phi=\{\phi_i:P_i\to \M_i\}$ embeds entities into metric or Riemannian spaces;
    \item $\rho$ is a finite-dimensional representation of the schema quiver;
    \item $\F$ is a cellular sheaf encoding local compatibility data;
    \item $\eta$ maps certified base facts to provenance generators and source evidence.
\end{enumerate}
\end{definition}

This definition intentionally contains both discrete and continuous objects. Their responsibilities differ. $E,\T,J$ define certified symbolic semantics. $\Phi,\rho,\F$ define retrieval and consistency geometry. $\eta$ supplies evidence and derivation identity.

\section{Certified Symbolic Semantics}
\label{sec:certified}

\subsection{The extensional base and closure}
Let $\mathfrak{R}$ be the finite lattice of all typed relation families over $P$. A positive Horn rule has the form
\[
R_{a_1};\cdots;R_{a_m}\subseteq R_b,
\]
possibly with repeated variables and type-compatible joins.

Define the immediate-consequence operator $T_{\T}:\mathfrak{R}\to\mathfrak{R}$ by
\[
T_{\T}(X)=X\cup\{\text{every tuple licensed by one application of a rule in }\T\text{ to }X\}.
\]
\begin{remark}[Inflationarity]
\label{rem:inflationary}
$T_{\T}$ is \emph{inflationary}: $X\subseteq T_{\T}(X)$ for every $X\in\mathfrak{R}$, since the definition retains all elements of $X$ in the union. This is what makes the ascending chain below well-defined; it is stated separately because monotonicity alone does not imply it.
\end{remark}

\begin{proposition}[Monotonicity]
$T_{\T}$ is monotone: if $X\subseteq Y$, then $T_{\T}(X)\subseteq T_{\T}(Y)$.
\end{proposition}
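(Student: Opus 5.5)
The plan is to decompose $T_{\T}$ into its two summands and show each is monotone. I write
\[
T_{\T}(X)=X\cup C(X),\qquad C(X)=\bigcup_{r\in\T} C_r(X),
\]
where $C_r(X)$ is the set of tuples licensed by one application of rule $r$ to $X$. The identity summand $X\mapsto X$ is trivially monotone. A union of monotone maps is monotone, because if $f(X)\subseteq f(Y)$ and $g(X)\subseteq g(Y)$, then $f(X)\cup g(X)\subseteq f(Y)\cup g(Y)$. So the statement reduces to showing that each single-rule operator $C_r$ is monotone.

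For a fixed rule $r$ of the form $R_{a_1};\cdots;R_{a_m}\subseteq R_b$, I will make the word ``licensed'' precise. A tuple $(x_0,x_m)$ belongs to the $b$-component of $C_r(X)$ exactly when there exist type-correct witnesses $x_1,\dots,x_{m-1}$ with $(x_{k-1},x_k)\in X_{a_k}$ for every $k$. Rules with repeated variables or joins are handled the same way: the condition is a finite conjunction of atomic memberships $\bar{x}\in X_{c}$, existentially quantified over witnesses. Now suppose $X\subseteq Y$ componentwise and $(x_0,x_m)\in C_r(X)_b$. The same witnesses satisfy every atomic condition with $Y$ in place of $X$, since each $X_{a_k}\subseteq Y_{a_k}$. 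Hence $(x_0,x_m)\in C_r(Y)_b$, and $C_r(X)\subseteq C_r(Y)$.

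Equivalently, the $b$-component of $C_r(X)$ is the relational composite $X_{a_m}\circ\cdots\circ X_{a_1}$ in $\Rel$ (or its join variant). Relational composition is monotone in each argument, because hom-sets in $\Rel$ are locally ordered by inclusion.

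The only real obstacle is the informal phrase ``one application of a rule.'' The argument needs rule bodies to be \emph{positive}: existentially quantified conjunctions of atoms, with no negation and no inequality side-conditions on $X$. That is exactly what the definition of positive Horn rules in \S\ref{sec:certified} guarantees, and I will state this dependence explicitly. Inflationarity (Remark~\ref{rem:inflationary}) is not needed for this proof; it is a separate property.
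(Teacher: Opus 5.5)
Your proof is correct and takes the same route as the paper's: the copied part is handled by $X\subseteq Y$, and each newly licensed tuple stays licensed because positive rule bodies have witnesses in $X$ that remain witnesses in $Y$. You make explicit the per-rule decomposition of $T_{\T}$ and the existentially quantified conjunctive form of ``licensed,'' both of which the paper leaves implicit.
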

\begin{proof}
Every rule in $\T$ is positive: premises contain no negation. Any witness tuple satisfying a premise in $X$ also satisfies it in $Y$. Hence every consequence added from $X$ is also added from $Y$, and $X\subseteq Y$ handles the copied part.
\end{proof}

\begin{theorem}[Finite least closure]
\label{thm:closure}
For finite $P$, the sequence
\[
E\subseteq T_{\T}(E)\subseteq T_{\T}^{2}(E)\subseteq\cdots
\]
stabilizes after finitely many steps at a unique least fixed point of $T_{\T}$ above $E$,
\[
\Cl_{\T}(E)=\operatorname{lfp}(T_{\T}).
\]
\end{theorem}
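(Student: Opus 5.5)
The plan is to combine the two facts already established about $T_{\T}$, namely inflationarity (Remark~\ref{rem:inflationary}) and monotonicity (the preceding Proposition), with the finiteness of the lattice $\mathfrak{R}$.

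\textbf{Step 1: the chain is ascending.} By inflationarity, $T_{\T}^{n}(E)\subseteq T_{\T}(T_{\T}^{n}(E))=T_{\T}^{n+1}(E)$ for every $n$, so the displayed sequence is an ascending chain in $\mathfrak{R}$.

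\textbf{Step 2: the chain stabilizes.} Since $P$ is finite, $\mathfrak{R}$ is a finite lattice. The number of possible tuples is
\[
N=\sum_{a}|P_{s(a)}|\,|P_{t(a)}|
\]
(summing over relation symbols, including any rule heads). A strictly increasing chain can therefore have at most $N+1$ members. Hence there is some $n_0\le N$ with $T_{\T}^{n_0+1}(E)=T_{\T}^{n_0}(E)$. Once one step is stationary, every later step is stationary by induction. Call the stable value $C$. Then $C$ is a fixed point of $T_{\T}$, and $E\subseteq C$.

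\textbf{Step 3: $C$ is the least fixed point above $E$.} Let $Y$ be any fixed point with $E\subseteq Y$. By induction using monotonicity, $T_{\T}^{n}(E)\subseteq T_{\T}^{n}(Y)=Y$ for all $n$, so $C\subseteq Y$. Uniqueness is then immediate: two least elements of the set of fixed points above $E$ must contain each other.

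\textbf{Step 4: matching the notation $\operatorname{lfp}(T_{\T})$.} Here I read $\operatorname{lfp}(T_{\T})$ as the least fixed point above $E$. Equivalently, it is the least fixed point of the operator $X\mapsto E\cup T_{\T}(X)$, which is also monotone. Note that the unrestricted least fixed point of $T_{\T}$ alone is $\emptyset$ whenever $T_{\T}$ only adds rule consequences. Alternatively, one can check via Knaster--Tarski that $C$ is the meet of all pre-fixed points $Y\supseteq E$ with $T_{\T}(Y)\subseteq Y$, and the Step 3 argument adapts verbatim to that characterization.

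The main obstacle is not technical but definitional. One must fix that "least" is taken relative to fixed points containing $E$, and that $\mathfrak{R}$ includes relation symbols for all rule heads, so that finiteness of $\mathfrak{R}$ really holds. After that, the argument is the standard Kleene iteration.
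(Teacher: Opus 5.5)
Your proposal is correct and follows essentially the same route as the paper: inflationarity makes the chain ascending, finiteness of $\mathfrak{R}$ forces stabilization, and induction with monotonicity places the limit below every fixed point containing $E$. The differences are cosmetic: your stabilization bound via the number of possible tuples is sharper than the paper's $|\mathfrak{R}|$, and your reading of $\operatorname{lfp}(T_{\T})$ as the least fixed point above $E$ is exactly what the paper's parenthetical restriction to $\{X\in\mathfrak{R}:E\subseteq X\}$ encodes.
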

\begin{proof}
The chain is ascending by Remark~\ref{rem:inflationary}, and $\mathfrak{R}$ is a finite lattice, so the chain stabilizes at some $N\le|\mathfrak{R}|$; write $F=T_{\T}^{N}(E)$, which satisfies $T_{\T}(F)=F$ and $E\subseteq F$. For minimality, let $Y$ be any fixed point with $E\subseteq Y$. By induction, $T_{\T}^{0}(E)=E\subseteq Y$, and if $T_{\T}^{n}(E)\subseteq Y$ then monotonicity gives $T_{\T}^{n+1}(E)\subseteq T_{\T}(Y)=Y$. Hence $F\subseteq Y$. (Equivalently, apply Knaster--Tarski to the monotone $T_{\T}$ on the finite lattice $\{X\in\mathfrak{R}:E\subseteq X\}$.)
\end{proof}

Thus certified inference is deterministic and terminating on a finite knowledge base without appealing to learned representations; this is the standard fixed-point semantics of positive Datalog \citep{abiteboul1995}.

\subsection{Model-relative soundness}

Let $\mathfrak{M}$ be a typed relational structure over the same signature, and write
\[
\At(\mathfrak{M})=\{f : f\text{ is a typed atomic statement and }\mathfrak{M}\models f\}
\]
for the set of atoms it satisfies. Comparing a derived fact set to $\At(\mathfrak{M})$, rather than to $\mathfrak{M}$ itself, keeps the containment statements below type-correct.

\begin{assumption}[Source correctness]
There exists an intended relational model $\mathfrak{M}$ such that $E\subseteq\At(\mathfrak{M})$.
\end{assumption}

\begin{assumption}[Rule soundness]
Every rule in $\T$ is valid in $\mathfrak{M}$.
\end{assumption}

\begin{theorem}[Soundness of certified closure]
\label{thm:closure-sound}
Under Source Correctness and Rule Soundness,
\[
\Cl_{\T}(E)\subseteq \At(\mathfrak{M}).
\]
\end{theorem}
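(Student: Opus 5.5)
The plan is to show that $\At(\mathfrak{M})$ is closed under $T_{\T}$, and then either use the leastness clause of Theorem~\ref{thm:closure} or run an induction along the ascending chain. Because $\At(\mathfrak{M})$ is not obviously a fixed point, I will argue by induction on the chain. Regard $\At(\mathfrak{M})$, restricted to typed atoms over the finite universe $P$, as an element of the lattice $\mathfrak{R}$. Then prove by induction on $n$ that $T_{\T}^{n}(E)\subseteq\At(\mathfrak{M})$ for every $n\ge 0$. Theorem~\ref{thm:closure} identifies $\Cl_{\T}(E)$ with $T_{\T}^{N}(E)$ for some finite $N$, so this yields the claim.

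The base case $n=0$ is exactly Source Correctness: $E\subseteq\At(\mathfrak{M})$. For the inductive step, assume $X:=T_{\T}^{n}(E)\subseteq\At(\mathfrak{M})$ and take any tuple $f\in T_{\T}(X)$. Either $f\in X$, and then we are done by hypothesis, or $f$ is licensed by one application of a rule
\[
R_{a_1};\cdots;R_{a_m}\subseteq R_b
\]
to $X$. In the second case there are witness atoms $g_1,\dots,g_m\in X$ that match the premise body under a type-correct assignment of the variables, and $f$ is the instantiated head. By hypothesis each $g_\ell\in\At(\mathfrak{M})$, so $\mathfrak{M}$ satisfies the instantiated premise. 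Rule Soundness says the rule is valid in $\mathfrak{M}$, so $\mathfrak{M}\models f$, that is, $f\in\At(\mathfrak{M})$. Hence $T_{\T}^{n+1}(E)\subseteq\At(\mathfrak{M})$.

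The main obstacle is the one point needing care: making precise that ``valid in $\mathfrak{M}$'' means the universally quantified Horn implication holds under every variable assignment. This includes rules with repeated variables and joins, and existentially chained intermediate entities in a composition. Such intermediate entities are witnessed by atoms of $X$ and therefore by elements of $\mathfrak{M}$. Relational composition is interpreted the same way in $X$ and in $\mathfrak{M}$, so the same witnesses work in both. I also need the entities of $P$ to denote elements of $\mathfrak{M}$ with the correct sorts. I take this as implicit in $\mathfrak{M}$ being a typed structure over the same signature, and I will state it explicitly in the proof.
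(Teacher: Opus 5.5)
Your proposal is correct and is essentially the paper's proof. Both argue by induction along the chain $T_{\T}^{n}(E)$, with Source Correctness as the base case and Rule Soundness for the inductive step, and then use the stabilization in Theorem~\ref{thm:closure}. Your alternative route, which you set aside, would also have worked: since $T_{\T}$ is inflationary, closure of $\At(\mathfrak{M})\cap\mathfrak{R}$ under $T_{\T}$ already makes it a fixed point containing $E$, so the leastness clause applies directly.
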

\begin{proof}
Induct on the stage $n$ of the fixed-point construction of Theorem~\ref{thm:closure}. For $n=0$, $T_{\T}^{0}(E)=E\subseteq\At(\mathfrak{M})$ by Source Correctness. Suppose $T_{\T}^{n}(E)\subseteq\At(\mathfrak{M})$. A fact in $T_{\T}^{n+1}(E)\setminus T_{\T}^{n}(E)$ is licensed by one application of a rule of $\T$ whose premises lie in $T_{\T}^{n}(E)$ and hence hold in $\mathfrak{M}$; Rule Soundness makes the conclusion hold in $\mathfrak{M}$. Since the chain stabilizes at $\Cl_{\T}(E)$, the claim follows.
\end{proof}

The theorem is conditional, as it must be: mathematics cannot establish that a source is factually correct merely from the fact that it was ingested. What the theorem establishes is that the inference engine introduces no conclusions outside the declared semantics.

\section{Epistemic Stratification and Ground Truth}

We now formalize the distinction between certified facts and model hypotheses.

\begin{definition}[Epistemic strata]
For each typed atomic statement $f$, assign one of three statuses:
\[
\mathsf{status}(f)\in\{\mathsf{base},\mathsf{derived},\mathsf{hypothesis}\}.
\]
\begin{itemize}
    \item $\mathsf{base}$: directly supported by admitted source evidence;
    \item $\mathsf{derived}$: belongs to $\Cl_{\T}(E)$ and has a valid derivation certificate;
    \item $\mathsf{hypothesis}$: proposed by geometry, embedding, an LLM, a link predictor, or any process not licensed by $\T$.
\end{itemize}
\end{definition}

\begin{definition}[Certified ground-truth set]
\label{def:gstar}
The internal certified knowledge set is the closure $\Cl_{\T}(E)$ equipped with the status labelling
\[
G^{\star}=\underbrace{E}_{\mathsf{base}}\;\sqcup\;\underbrace{\bigl(\Cl_{\T}(E)\setminus E\bigr)}_{\mathsf{derived}},
\]
subject to the additional requirement that each $\mathsf{derived}$ member carry a valid proof object.
\end{definition}

\begin{remark}
As a set, $G^{\star}=\Cl_{\T}(E)$, since $E\subseteq\Cl_{\T}(E)$; the content of Definition~\ref{def:gstar} is the \emph{partition} into strata and the proof-object requirement on the second block, not the union. We write it as a disjoint union to emphasize that the two blocks are governed by different admission mechanisms: evidence for $\mathsf{base}$, derivation for $\mathsf{derived}$. The complement $\mathfrak{R}\setminus G^{\star}$ carries status $\mathsf{hypothesis}$.
\end{remark}

The term ``ground truth'' throughout means \emph{certified relative to an explicit evidence protocol and a declared rule set}, not metaphysical truth. This distinction is essential in dynamic or contested domains, and should be stated prominently in any system description.

\begin{principle}[No geometric promotion]
\label{prin:no-promotion}
No numerical score, embedding similarity, neural prediction, or LLM assertion can change a fact's status from $\mathsf{hypothesis}$ to $\mathsf{base}$ or $\mathsf{derived}$. Promotion requires new admitted evidence or a valid symbolic derivation.
\end{principle}

This principle is an architectural invariant rather than a statistical calibration rule.

\subsection{Extraction is not attestation}
\label{sec:extraction}

The principle above invites an immediate objection. In any realistic deployment the base set $E$ is populated by running an information-extraction model---frequently an LLM---over a text corpus. If LLM assertions cannot become base facts, how does $E$ acquire any members at all? Confronting this squarely is essential, because it is where most implementations of ``grounded'' retrieval quietly leak.

The resolution is to distinguish two different propositions. Let $\sigma$ be a source span.
\begin{itemize}[leftmargin=*]
    \item \textbf{Attestation:} \emph{the span $\sigma$ asserts $f$.} This is a claim about the text, checkable in principle by re-reading $\sigma$.
    \item \textbf{Extraction:} \emph{a model believes that $\sigma$ asserts $f$.} This is a statistical claim about the model.
\end{itemize}
An extractor produces the second; the admission predicate must decide the first. The framework is therefore consistent with LLM-based ingestion provided that extraction output is treated as a $\mathsf{hypothesis}$ about attestation, and that the admission predicate applies an independent check---span-grounded verification, a second extractor with disagreement routing, human curation, or a schema-and-type validity test---before the pair $(f,\epsilon_f)$ enters $E$.

\begin{principle}[Attestation gate]
\label{prin:attestation}
An extractor may propose $(f,\sigma)$ but may not admit it. Admission is the act of an evidence policy $\operatorname{Admit}$ that is independent of the extractor and that binds $f$ to a stable, re-checkable representation of $\sigma$.
\end{principle}

This weakens no theorem below. Soundness is stated relative to Source Correctness, and Source Correctness is now explicitly a property of the admission policy rather than of the extractor. The practical consequence is a measurable quantity: the \emph{attestation error rate} of the admission policy, which should be reported as a first-class system statistic alongside retrieval metrics.

\section{Provenance as a Derivation Algebra}

Assign each certified base fact $e\in E$ an indeterminate $x_e$. For positive relational queries, provenance can be represented in the polynomial semiring $\mathbb{N}[X]$ \citep{green2007}. Products represent joint use of premises and sums represent alternative derivations.

\begin{definition}[Proof provenance]
For a derived fact $f$, let
\[
\Prov(f)=\sum_{\delta\in D(f)}\prod_{e\in \mathrm{leaves}(\delta)}x_e,
\]
where $D(f)$ is the set of admitted finite derivations of $f$.
\end{definition}

For recursive rules, care is needed because derivation families may be infinite; semiring semantics for recursive Datalog require additional algebraic conditions or finite proof representations \citep{bourgeois2022,khamis2021}. In a bounded-retrieval implementation one may instead retain a finite derivation DAG up to depth $h$.

\begin{definition}[Proof-carrying fact]
A proof-carrying fact is a pair $(f,\delta)$ where $f$ is an atomic statement and $\delta$ is a finite derivation tree whose leaves resolve to admitted source evidence and whose internal nodes instantiate rules in $\T$.
\end{definition}

\begin{proposition}[Cheap verification]
For fixed rule arity and an explicit finite derivation tree $\delta$, verification of $(f,\delta)$ is linear in the size of $\delta$ plus the cost of resolving its leaves.
\end{proposition}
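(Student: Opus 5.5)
The plan is to give a verification procedure that visits each node of $\delta$ once and does only constant work there, apart from leaf resolution. First we fix the data model. The derivation tree $\delta$ is given explicitly: each internal node is labelled by a fact, a rule $r\in\T$, and a variable substitution, and has one child per premise of $r$. Each leaf is labelled by a fact together with a pointer to evidence $\epsilon$. Size $|\delta|$ means the number of nodes plus the total length of these labels. Because $\T$ is fixed and finite, with arity bounded by a constant $k$, every rule has at most $k$ premises, each premise is an atom of bounded width, and each rule has a bounded number of variables.

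Verification is a single traversal, say depth-first post-order, that checks local conditions.
\begin{enumerate}[leftmargin=*]
\item At the root, check that its fact label equals $f$.
\item At each internal node labelled by $(g,r,\theta)$, check three things: that $r\in\T$; that the number of children equals the number of premises of $r$ and that child $i$ is labelled by exactly $\theta(\text{premise}_i)$; and that $\theta(\text{head}(r))=g$. We also check that $\theta$ is type-correct, which amounts to membership of each image in the appropriate $P_i$ for the sort dictated by $Q$.
\item At each leaf labelled $(g,\epsilon)$, call the resolution oracle, which checks that $g\in E$ and that $\eta$ binds $g$ to $\epsilon$ in the sense of the attestation gate (Principle~\ref{prin:attestation}). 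The cost of this call is charged separately to the ``cost of resolving its leaves''.
\end{enumerate}
The number of comparisons per internal node is $O(k\cdot w)$, where $w$ is the maximal atom width, so it is $O(1)$ in the fixed-arity regime. Summing over all nodes gives $O(|\delta|)$ plus the leaf costs.

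Correctness follows by induction on the height of $\delta$. If every local check succeeds, then each node's fact is obtained from its children's facts by one application of a rule of $\T$, in exactly the sense of $T_{\T}$. So every fact appearing at height $n$ lies in $T_{\T}^{n}(E)$ once leaf resolution confirms that the leaves lie in $E$. Hence $f\in\Cl_{\T}(E)$ by Theorem~\ref{thm:closure}. Conversely, any genuine derivation passes all of the checks. This also justifies that the checker accepts exactly the proof-carrying facts.

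The main obstacle is making ``linear'' honest in the cost model. Comparing entity identifiers and checking rule membership must take $O(1)$ time. This needs entities encoded as fixed-size identifiers, or else comparison cost proportional to label length, which is already counted in $|\delta|$. Rules must be indexed by an identifier into the fixed set $\T$. I would state this uniform-cost assumption explicitly. If the paper's intended DAG representation of derivations is used instead of a tree, I would remark that memoizing verified nodes keeps the bound linear in the DAG size.
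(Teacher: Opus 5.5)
The paper states this proposition without proof, treating it as the standard proof-carrying-code observation that checking a derivation is one pass of local rule-instance checks. Your argument makes exactly that observation rigorous and is correct. It also spells out what the paper leaves implicit: the uniform-cost model (constant-time identifier comparison and rule lookup by index), and the soundness induction, in which inflationarity of $T_{\T}$ is what places children of unequal height inside $T_{\T}^{n-1}(E)$.
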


This is the retrieval analogue of the proof-carrying design pattern \citep{necula1997}: inference may be computationally expensive, but consumption requires only proof checking.

\section{Quiver Representations as Relational Geometry}

\subsection{Representation of the schema}

\begin{definition}[Linear representation]
A finite-dimensional representation $\rho$ of $Q$ assigns a finite-dimensional inner-product space $V_i$ to each $i\in Q_0$ and a linear map
\[
A_a:V_{s(a)}\to V_{t(a)}
\]
to every arrow $a\in Q_1$.
\end{definition}

For a path $p=a_n\cdots a_1$, define
\[
A_p=A_{a_n}\cdots A_{a_1}.
\]
Thus relation composition is literal operator composition.

If $J$ is generated by path equations $p-q$, the representation factors through $kQ/J$ precisely when $A_p=A_q$ for every generator of $J$.

\begin{proposition}[Factorization criterion]
\label{prop:factorization}
Extend $\rho$ linearly to $kQ$ by $r=\sum_i \lambda_i p_i \mapsto A_r=\sum_i\lambda_i A_{p_i}$. Then $\rho$ descends to a module over $\A_Q=kQ/J$ if and only if $A_r=0$ for every $r\in J$, and it suffices to check this on a generating set of $J$; equivalently, all declared path equations hold under $\rho$.
\end{proposition}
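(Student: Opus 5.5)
The plan is to make $\rho$ into a $kQ$-module and then apply the standard fact that a module over $kQ$ descends to $kQ/J$ exactly when $J$ annihilates it.

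\emph{Step 1: $\rho$ as a $kQ$-module.} Set $V=\bigoplus_{i\in Q_0}V_i$. Let each path $p$ act on $V$ by $A_p$ on the summand $V_{s(p)}$, followed by inclusion into $V_{t(p)}$, and by zero on the other summands. Let $e_i$ act as the projection onto $V_i$. The assignment $p\mapsto A_p$ respects concatenation, since $A_{pq}=A_pA_q$ by the definition $A_p=A_{a_n}\cdots A_{a_1}$. It also sends non-composable products to zero, because the target summand of $q$ is then killed by $p$. Extending linearly gives an algebra homomorphism $\hat\rho:kQ\to\operatorname{End}_k(V)$, $r\mapsto A_r$. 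Checking multiplicativity on basis paths and extending bilinearly is routine.

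\emph{Step 2: descent criterion.} A $kQ/J$-module structure on $V$ that is compatible with the quotient map $\pi:kQ\to kQ/J$ is the same thing as a homomorphism $\bar\rho:kQ/J\to\operatorname{End}_k(V)$ with $\bar\rho\circ\pi=\hat\rho$. By the universal property of quotient rings, such a $\bar\rho$ exists, and is then unique, if and only if $J\subseteq\ker\hat\rho$. This is the statement that $A_r=0$ for all $r\in J$.

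\emph{Step 3: reduction to generators.} Let $J$ be generated as a two-sided ideal by a set $S$. Every $r\in J$ is a finite sum $\sum_k u_k s_k w_k$ with $u_k,w_k\in kQ$ and $s_k\in S$. Multiplicativity and linearity of $\hat\rho$ give $A_r=\sum_k A_{u_k}A_{s_k}A_{w_k}$, which vanishes whenever every $A_{s_k}=0$. Equivalently, $\ker\hat\rho$ is a two-sided ideal containing $S$, so it contains $J$. When $S$ consists of the differences $p-q$, we have $A_{p-q}=A_p-A_q$, so the condition reads $A_p=A_q$ for every declared path equation.

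\emph{Where care is needed.} The only delicate point is Step 1, specifically making the extension to $kQ$ a genuine algebra homomorphism. This requires the action on $V=\bigoplus_i V_i$, including the idempotents $e_i$, so that non-composable products correctly go to zero. Once that is in place, Steps 2 and 3 are formal and use no admissibility of $J$, consistent with Remark~\ref{rem:admissible}.
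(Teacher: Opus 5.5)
Your proposal is correct and follows the same route as the paper: you regard $\rho$ as a $kQ$-module, note that descent to $kQ/J$ is exactly the condition $J\subseteq\ker\hat\rho$, and use that $\ker\hat\rho$ is a two-sided ideal to reduce to a generating set. The only difference is that you spell out the representation--module correspondence on $V=\bigoplus_{i\in Q_0}V_i$, which the paper simply cites as standard; this also gives meaning to sums $\sum_i\lambda_iA_{p_i}$ whose paths have different endpoints.
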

\begin{proof}
A representation of $Q$ is the same thing as a $kQ$-module, and a $kQ$-module descends to $kQ/J$ exactly when $J$ annihilates it. Since $\ker(\rho)$ is a two-sided ideal, $J\subseteq\ker(\rho)$ holds iff every generator of $J$ lies in $\ker(\rho)$.
\end{proof}

This gives the path algebra a concrete role in learning: exact path equations define hard constraints on the operators, while approximate equality defines structural regularizers. Let
\[
\mathcal{E}_J=\{(p,q): p-q\text{ is a declared generator of }J\}
\]
be the finite set of declared path-equation pairs. Then
\[
\Lcal_{\mathrm{alg}}=\sum_{(p,q)\in\mathcal{E}_J}\|A_p-A_q\|_F^2 .
\]

\subsection{Entity states}
\label{sec:entity-states}
For an entity $x\in P_i$, let $z_x\in V_i$ denote a learned or encoded state. A relational residual for $a:i\to j$ is
\[
r_a(x,y)=\|A_a z_x-z_y\|_{V_j}.
\]
Small residual means the representation predicts compatibility with relation $a$. It does \emph{not} certify $(x,a,y)$ as true.

\subsection{Gauge symmetry}
Choice of basis in each $V_i$ is not semantically meaningful.

\begin{definition}[Unitary gauge transformation]
For each $i\in Q_0$, choose $U_i\in U(V_i)$. Transform
\[
z_x' = U_i z_x,\qquad A_a'=U_{t(a)}A_aU_{s(a)}^{-1}.
\]
\end{definition}

\begin{theorem}[Gauge invariance of relational residuals]
\label{thm:gauge}
For unitary gauge transformations,
\[
\|A_a' z_x'-z_y'\|=\|A_a z_x-z_y\|.
\]
More generally, the same holds for every path residual $\|A_p z_x-z_y\|$.
\end{theorem}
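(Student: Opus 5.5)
The plan is a direct computation: first the single-arrow case, then an extension to paths by showing that path operators transform covariantly. Throughout, fix an arrow $a:i\to j$ and entities $x\in P_i$, $y\in P_j$. The gauge acts on entity states according to the sort of the entity, so $z_x'=U_i z_x$ and $z_y'=U_j z_y$.

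For a single arrow, substitute the definitions:
\[
A_a' z_x' - z_y' = U_j A_a U_i^{-1} U_i z_x - U_j z_y = U_j\bigl(A_a z_x - z_y\bigr).
\]
Since $U_j\in U(V_j)$ preserves the inner product on $V_j$, it preserves the induced norm. Hence $\|U_j w\|=\|w\|$ for $w=A_a z_x-z_y$, which gives the first claim.

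For a path $p=a_n\cdots a_1$ from sort $i_0=s(a_1)$ to sort $i_n=t(a_n)$, write $i_k=t(a_k)$, so that $s(a_{k+1})=i_k$ by composability. The key intermediate identity is the covariance
\[
A_p' = U_{i_n} A_p U_{i_0}^{-1},
\]
where $A_p'=A_{a_n}'\cdots A_{a_1}'$. I would prove it by induction on $n$. The base case $n=1$ is the definition, and for $n=0$ we take $A_{e_i}=\id$, which is trivially covariant. For the inductive step, consider the product of the transformed operators. Each adjacent pair contributes $U_{s(a_{k+1})}^{-1}U_{t(a_k)}=U_{i_k}^{-1}U_{i_k}=\id$, so the internal gauge factors telescope away. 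Once covariance is in hand, the single-arrow computation applies verbatim with $A_a$ replaced by $A_p$ and $U_j$ by $U_{i_n}$. This gives $\|A_p' z_x'-z_y'\|=\|A_p z_x - z_y\|$ for $x\in P_{i_0}$ and $y\in P_{i_n}$.

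No step is a serious obstacle. The only point needing care is the telescoping bookkeeping: the intermediate unitaries cancel precisely because the path is composable, so the target of each arrow is the source of the next. It is also worth remarking where unitarity is used. It enters only at the final norm-preservation step, since covariance holds for any $U_i\in GL(V_i)$. This is exactly why the isometry group, rather than the full base-change group, is the symmetry of the residuals.
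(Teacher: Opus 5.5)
Your proof is correct and follows the paper's route: the same one-line computation $A_a'z_x'-z_y'=U_j(A_az_x-z_y)$ plus norm preservation, with the path case handled by the telescoping covariance $A_p'=U_{t(p)}A_pU_{s(p)}^{-1}$. Your explicit induction, and the observation that covariance holds for all of $G_{\bm d}$ while unitarity is used only in the final norm step, simply spell out what the paper states briefly (consistent with Remark~\ref{rem:gauge-gap}).
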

\begin{proof}
For an arrow $a:i\to j$,
\[
A_a'z_x'-z_y'=U_jA_aU_i^{-1}U_i z_x-U_jz_y=U_j(A_az_x-z_y).
\]
Unitary maps preserve norm. For a path $p=a_n\cdots a_1$ from $i$ to $j$ the operators telescope, $A_p'=U_{j}A_pU_{i}^{-1}$, and the same computation applies.
\end{proof}

\begin{principle}[Gauge-invariant retrieval]
A retrieval criterion defined on a quiver representation should depend only on quantities invariant under the declared gauge group, not on arbitrary coordinate choices.
\end{principle}

This principle is useful when different encoders, checkpoints, or distributed components choose different internal bases.

\subsection{Representation varieties: which group is the gauge group?}
\label{sec:moduli}
For a dimension vector $\bm d=(d_i)_{i\in Q_0}$,
\[
\Rep(Q,\bm d)=\prod_{a:i\to j}\Hom(k^{d_i},k^{d_j})
\]
is the affine representation space. The base-change group
\[
G_{\bm d}=\prod_{i\in Q_0}GL(d_i)
\]
acts by simultaneous change of basis, and two points in the same $G_{\bm d}$-orbit are isomorphic as representations. Learned relation operators therefore live in a moduli problem, a viewpoint that has appeared in geometric analyses of neural networks and quiver varieties \citep{jeffreys2021,armenta2020,ganev2022}.

This orbit equivalence must not be conflated with the invariance established in Theorem~\ref{thm:gauge}; the distinction has practical consequences for system design.

\begin{remark}[The isometry subgroup, not the full base-change group]
\label{rem:gauge-gap}
Theorem~\ref{thm:gauge} holds for unitary $U_i$ only. If $g=(g_i)\in G_{\bm d}$ is a general invertible base change, then
\[
\|A_a'z_x'-z_y'\|=\|g_{j}(A_az_x-z_y)\|,
\]
which differs from $\|A_az_x-z_y\|$ unless $g_j$ is an isometry. Consequently two representations that are \emph{isomorphic}---the same point of the moduli space $\Rep(Q,\bm d)/\!\!/G_{\bm d}$---can induce different retrieval rankings. Residual-based retrieval is invariant under
\[
\Iso(\bm d)=\prod_{i\in Q_0}U(V_i)\;\subsetneq\;G_{\bm d},
\]
and the quotient by $G_{\bm d}$ is strictly coarser than the equivalence that retrieval respects.
\end{remark}

Three responses are available, and a system should state which it adopts. \emph{(i)} Fix the inner products and declare $\Iso(\bm d)$ to be the gauge group, treating each $V_i$ as an inner-product space rather than a bare vector space; the moduli-theoretic object is then a quotient by a compact group and Theorem~\ref{thm:gauge} is exactly right. \emph{(ii)} Parameterize relation operators inside the orthogonal or unitary group by construction, as orthogonally parameterized KGE models do \citep{li2024golde}; then $\Iso$-invariance is structural. \emph{(iii)} Report scores that are genuinely $G_{\bm d}$-invariant---ranks, kernel and image dimensions, or residuals normalized by learned metrics that transform covariantly---accepting a weaker but coordinate-free criterion. Option~(i) is our default, and it is what Design Principle \ref{prin:gauge-log} in Appendix~\ref{app:invariants} asks implementations to log.

We do not claim that classification of indecomposable representations is needed for retrieval. The useful consequence is narrower: quiver representation theory supplies the correct object for typed linear relation operators and exposes their symmetries---including, as Remark~\ref{rem:gauge-gap} shows, the fact that the naive symmetry group is the wrong one.

\section{Metric and Mixed-Geometry Semantic Spaces}

The representation $\rho$ captures relational transformations. Semantic retrieval additionally requires a notion of proximity. For each type $i$, let $(\M_i,d_i)$ be a metric space and
\[
\phi_i:P_i\to \M_i
\]
an entity embedding (the maps $\Phi=\{\phi_i\}$ appear in the PAGR tuple of Definition~\ref{def:pagr}). A text query $q$ is mapped by type-specific \emph{query encoders} $\psi_i:\mathcal{Q}\to \M_i$, where $\mathcal{Q}$ is the space of queries; these are auxiliary inference-time maps and need not be part of the stored system tuple.

The geometry need not be uniformly Euclidean. Hierarchical relations often motivate hyperbolic components; cyclic or rotational patterns motivate spherical or complex geometry; modern KGE systems increasingly use richer or mixed geometries \citep{sun2019,li2024golde,yusupov2025}.

\begin{definition}[Product geometry]
Let
\[
\M_i=\M_i^{(1)}\times\cdots\times\M_i^{(m)}
\]
with component distances $d_i^{(r)}$. A weighted product metric is
\[
d_i^2(x,y)=\sum_{r=1}^{m}\lambda_r d_i^{(r)}(x_r,y_r)^2,
\qquad \lambda_r\ge 0.
\]
\end{definition}

This makes ``geometric knowledge graph'' precise: geometry is a family of metric representation spaces connected by typed relational operators, not merely a generic vector index.

\subsection{Cross-type score commensurability}
\label{sec:commensurable}

Type-specific geometry creates a difficulty that a single global embedding space does not have, and it must be resolved before any statement about top-$k$ retrieval is meaningful. A raw distance $d_i(\psi_i(q),\phi_i(x))$ in $\M_i$ and a raw distance $d_j(\psi_j(q),\phi_j(y))$ in $\M_j$ are numbers in incomparable units: the spaces may have different dimensions, different curvature, and different diameter. Selecting a top-$k$ set over the whole entity universe $P=\bigsqcup_i P_i$ by comparing such numbers directly is not well defined.

We therefore require an explicit calibration step.

\begin{definition}[Calibrated semantic score]
\label{def:calibrated}
For each type $i$ let $F_i$ be the cumulative distribution function of $d_i(\psi_i(q),\phi_i(x))$ over a reference population of entities of type $i$. The calibrated score is
\[
s_{\mathrm{geo}}(q,x)=\pi_i\cdot\bigl(1-F_i\bigl(d_i(\psi_i(q),\phi_i(x))\bigr)\bigr),
\qquad x\in P_i,
\]
where $\pi_i\ge0$ is a type prior with $\sum_i\pi_i=1$.
\end{definition}

Composing with $F_i$ maps every type into the common unit interval, so cross-type comparison ranks entities by within-type extremeness weighted by a declared type prior. Any strictly decreasing calibration would serve; the point is that some choice must be declared, since the choice is a modeling commitment and not a normalization detail. All subsequent statements about $S_k(q)$ refer to $s_{\mathrm{geo}}$ in the sense of Definition~\ref{def:calibrated}.

\subsection{Inductive seeding for unseen entities}
\label{sec:inductive}

Transductive embeddings assign a vector only to entities present at training time, which is a poor fit for a knowledge base whose base set $E$ grows through the admission protocol of Section~\ref{sec:evidence}. Two mechanisms keep seeding usable for entities added after training, and neither disturbs the epistemic strata.

First, an entity of type $i$ that has a textual or otherwise featurized description can be embedded directly by $\phi_i$ whenever $\phi_i$ is a feature encoder rather than a lookup table; this is the inductive case in the strict sense. Second, a new entity with no usable features but with certified incident edges can be seeded structurally: subgraph-reasoning methods predict relations from the local structure around a candidate pair without entity-specific embeddings \citep{teru2020}, and relation-graph methods transfer relation representations to graphs with unseen entities and unseen relation vocabularies \citep{lee2023}.

Both mechanisms produce \emph{seeds}, not facts. An inductively embedded entity enters Stage~I exactly as any other entity does, and any relation predicted about it carries status $\mathsf{hypothesis}$ until admitted. The value of inductive capability in this architecture is therefore confined to recall, which is precisely where the framework intends learned components to operate.

\subsection{Top-$k$ stability}
Let $s_q(x)$ be a retrieval score, larger being better, obtained as in Definition~\ref{def:calibrated}.

\begin{theorem}[Margin stability of geometric seeding]
\label{thm:margin}
Assume a perturbation changes every entity score by at most $\varepsilon$:
\[
|s_q'(x)-s_q(x)|\le \varepsilon.
\]
Let $x_{(k)}$ and $x_{(k+1)}$ denote the $k$th and $(k+1)$st ranked entities under $s_q$. If
\[
s_q(x_{(k)})-s_q(x_{(k+1)})>2\varepsilon,
\]
then the top-$k$ seed set is unchanged under the perturbation.
\end{theorem}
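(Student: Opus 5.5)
The plan is a direct comparison argument using only the uniform bound $|s_q'(x)-s_q(x)|\le\varepsilon$ and the gap hypothesis. Let $S_k=\{x_{(1)},\dots,x_{(k)}\}$ be the top-$k$ set under $s_q$. Set $\tau=\tfrac12\bigl(s_q(x_{(k)})+s_q(x_{(k+1)})\bigr)$, the midpoint of the gap. Then every $x\in S_k$ satisfies $s_q(x)\ge s_q(x_{(k)})>\tau+\varepsilon$. Every $y\notin S_k$ satisfies $s_q(y)\le s_q(x_{(k+1)})<\tau-\varepsilon$. Both strict inequalities come from the hypothesis that the gap exceeds $2\varepsilon$, together with the ordering of the ranked list.

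Next apply the perturbation bound to each side. For $x\in S_k$ we get $s_q'(x)\ge s_q(x)-\varepsilon>\tau$. For $y\notin S_k$ we get $s_q'(y)\le s_q(y)+\varepsilon<\tau$. Hence every element of $S_k$ strictly outscores every element of the complement under $s_q'$. The $k$ highest $s_q'$-scores are therefore attained exactly on $S_k$, so the top-$k$ set under $s_q'$ equals $S_k$, independent of ties or ordering inside $S_k$ or inside its complement.

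The only delicate point is well-definedness. The strict gap forces $s_q(x_{(k)})>s_q(x_{(k+1)})$, so the top-$k$ set under $s_q$ is unambiguous even if $s_q$ has ties elsewhere. The strict separation at $\tau$ shows the same for $s_q'$, so no tie-breaking convention is needed.

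We also need the pointwise bound to hold for every entity in the calibrated pool $P=\bigsqcup_i P_i$ (Definition~\ref{def:calibrated}). If the pool is finite, $x_{(k+1)}$ exists. If $|P|\le k$, the statement is vacuous.

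The conclusion concerns only set equality; the internal ranking may still change. I expect no real obstacle beyond stating this explicitly.
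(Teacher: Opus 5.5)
Your proof is correct and is essentially the paper's argument. The paper compares each top-$k$ entity $x$ directly with each outsider $x'$ via $s_q'(x)\ge s_q(x_{(k)})-\varepsilon>s_q(x_{(k+1)})+\varepsilon\ge s_q'(x')$; your midpoint threshold $\tau$ is just an explicit separator for that same chain of inequalities, and your well-definedness remark (the strict gap makes the top-$k$ set unambiguous) matches the paper's.
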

\begin{proof}
The strict margin implies that the top-$k$ set is uniquely determined under $s_q$. Let $x$ be ranked in the top $k$ and $x'$ outside it, so that $s_q(x)\ge s_q(x_{(k)})$ and $s_q(x')\le s_q(x_{(k+1)})$. Then
\[
s_q'(x)\ \ge\ s_q(x)-\varepsilon\ \ge\ s_q(x_{(k)})-\varepsilon
\qquad\text{and}\qquad
s_q'(x')\ \le\ s_q(x')+\varepsilon\ \le\ s_q(x_{(k+1)})+\varepsilon .
\]
The hypothesis $s_q(x_{(k)})-s_q(x_{(k+1)})>2\varepsilon$ rearranges to $s_q(x_{(k)})-\varepsilon>s_q(x_{(k+1)})+\varepsilon$, hence $s_q'(x)>s_q'(x')$ for every such pair. No boundary crossing occurs and the top-$k$ set is unchanged.
\end{proof}

This gives a concrete robustness diagnostic for semantic retrieval: report the boundary margin, not only the nearest-neighbor scores. The scope of the guarantee should be noted. It concerns stability of the seed set under score perturbation, and says nothing about whether the seed set is correct: a systematically biased encoder is perfectly stable. Stability and recall are separate quantities; Section~\ref{sec:noninterference} characterizes the latter.

\section{Cellular Sheaves as Local-to-Global Consistency}
\label{sec:sheaf}

A quiver representation says how relation types act globally. A cellular sheaf can encode edge-specific compatibility between local states.

Let $G=(V,E_G)$ be an undirected support graph obtained from certified or candidate relations. A cellular sheaf $\F$ assigns a vector space $\F(v)$ to each vertex, a vector space $\F(e)$ to each edge, and restriction maps
\[
\F_{v\unlhd e}:\F(v)\to\F(e)
\]
for each incidence $v\unlhd e$.

A $0$-cochain is $x=(x_v)_{v\in V}\in C^0(G;\F)=\bigoplus_v\F(v)$. The coboundary on an oriented edge $e=(u,v)$ is
\[
(\delta x)_e=\F_{v\unlhd e}x_v-\F_{u\unlhd e}x_u.
\]
The sheaf Laplacian is
\[
L_{\F}=\delta^{\ast}\delta.
\]

\begin{definition}[Sheaf consistency energy]
\[
\Energy_{\F}(x)=\langle x,L_{\F}x\rangle=\|\delta x\|^2.
\]
\end{definition}

\begin{proposition}[Zero-energy consistency]
\label{prop:zero-energy}
\[
\Energy_{\F}(x)=0 \quad\Longleftrightarrow\quad \delta x=0.
\]
Hence zero-energy $0$-cochains are exactly global sections, $\ker L_{\F}=H^0(G;\F)$.
\end{proposition}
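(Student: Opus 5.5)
The plan is to reduce everything to the positivity of the norm and the adjoint identity, working in finite-dimensional inner-product spaces. I take each stalk $\F(v)$ and $\F(e)$ to carry an inner product. This is implicit in writing $\delta^{\ast}$ and $\|\cdot\|$, and it matches the inner-product convention of the representation layer. The cochain spaces $C^0(G;\F)=\bigoplus_v\F(v)$ and $C^1(G;\F)=\bigoplus_e\F(e)$ then carry the orthogonal direct-sum inner products. With these choices, $\delta:C^0\to C^1$ is linear and has a well-defined adjoint $\delta^{\ast}$.

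The first step is to verify the identity $\langle x,L_{\F}x\rangle=\|\delta x\|^2$, which the definition of $\Energy_{\F}$ records. It follows by one line from the definition of the adjoint: $\langle x,\delta^{\ast}\delta x\rangle=\langle\delta x,\delta x\rangle$. The equivalence $\Energy_{\F}(x)=0\Leftrightarrow\delta x=0$ is then immediate from positive definiteness of the inner product on $C^1$. If the reader wants this at the level of edges, I would expand $\|\delta x\|^2=\sum_{e=(u,v)}\|\F_{v\unlhd e}x_v-\F_{u\unlhd e}x_u\|^2_{\F(e)}$. This is a finite sum of nonnegative terms, so it vanishes exactly when every edge term vanishes, i.e.\ when $x$ is compatible across every edge.

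Next I identify the kernel. By definition, $H^0(G;\F)=\ker\delta$: global sections are the $0$-cochains whose restrictions agree on every edge. It remains to show $\ker L_{\F}=\ker\delta$. The inclusion $\ker\delta\subseteq\ker L_{\F}$ is trivial, since $L_{\F}x=\delta^{\ast}(\delta x)=\delta^{\ast}0=0$. For the reverse inclusion, suppose $L_{\F}x=0$. Then $\|\delta x\|^2=\langle x,L_{\F}x\rangle=0$, so $\delta x=0$ by the first step. Combining the two inclusions gives $\ker L_{\F}=\ker\delta=H^0(G;\F)$. Hence the zero-energy set $\{x:\Energy_{\F}(x)=0\}$ coincides with both.

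No step is a genuine obstacle; the only point needing care is bookkeeping. The orientation chosen on each edge only flips the sign of $(\delta x)_e$, so it affects neither the energy nor the kernel. I would remark on this explicitly, because $G$ was introduced as undirected. I would also note that the argument uses only finite-dimensionality and positive definiteness. It therefore holds over $\mathbb{C}$ with Hermitian inner products just as over $\mathbb{R}$.
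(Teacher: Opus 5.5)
Your proof is correct and follows the paper's argument: the adjoint identity $\langle x,L_{\F}x\rangle=\langle\delta x,\delta x\rangle=\|\delta x\|^2$ together with positive definiteness of the norm. You also prove both inclusions giving $\ker L_{\F}=\ker\delta$ explicitly, a step the paper's one-line proof leaves implicit in its ``hence''.
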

\begin{proof}
Since $L_{\F}=\delta^{\ast}\delta$,
\[
\langle x,L_{\F}x\rangle=\langle\delta x,\delta x\rangle=\|\delta x\|^2,
\]
which vanishes exactly when $\delta x=0$.
\end{proof}

This is the standard positive-semidefiniteness argument for the sheaf Laplacian \citep{hansen2019spectral} and we record it only to fix notation; the substantive content of this section is the coupling established below.

\subsection{The representation-induced sheaf}
\label{sec:induced-sheaf}

As defined so far, the sheaf layer and the representation layer are two separate assignments of vector spaces to combinatorial data, and the sheaf energy appears to be an independent add-on. This section shows they are not independent. Take the support graph $G$ to have one edge $e$ per certified relation instance, and let $\lambda(e)\in Q_1$ be its relation label.

\begin{definition}[Sheaf induced by a representation]
\label{def:induced-sheaf}
Given $\rho$, define $\F^{\rho}$ by $\F^{\rho}(v)=V_{\tau(v)}$ for a vertex of sort $\tau(v)$, $\F^{\rho}(e)=V_{t(\lambda(e))}$ for an edge, and, for $e$ an instance of $a:i\to j$ oriented from $u$ to $v$,
\[
\F^{\rho}_{u\unlhd e}=A_a,
\qquad
\F^{\rho}_{v\unlhd e}=\id_{V_j}.
\]
\end{definition}

Under this construction $(\delta x)_e = x_v - A_a x_u$, so the edgewise coboundary is exactly the relational residual $r_a$ of Section~\ref{sec:entity-states} and the sheaf energy is the sum of squared residuals over the subgraph. The general Knowledge Sheaves construction \citep{gebhart2023} allows both restriction maps to be free and relation-specific; Definition~\ref{def:induced-sheaf} is the special case in which restrictions are \emph{tied by relation type} through $\rho$. That tying is what makes the following proposition available.

\begin{proposition}[Sections certify path equations on their support]
\label{prop:section-path-eq}
Let $x$ be a global section of $\F^{\rho}$ on a subgraph $H\subseteq G$, i.e.\ $\Energy_{\F^{\rho}}(x|_H)=0$. Let $p=a_n\cdots a_1$ be a path in the schema and suppose $u_0,u_1,\dots,u_n$ is a sequence of vertices of $H$ with each $(u_{m-1},u_m)$ an edge of $H$ labelled $a_m$. Then
\[
A_p\,x_{u_0}=x_{u_n}.
\]
Consequently, if $p$ and $q$ are two such realized paths in $H$ with common endpoints $u_0$ and $w$, then
\[
(A_p-A_q)\,x_{u_0}=0,
\]
so every declared path equation of $\mathcal{E}_J$ realized in $H$ holds when applied to the section.
\end{proposition}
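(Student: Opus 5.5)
The plan is to reduce everything to the edgewise form of the coboundary computed just after Definition~\ref{def:induced-sheaf}, and then induct along the realized path. First, by Proposition~\ref{prop:zero-energy} applied to the restriction of $\F^{\rho}$ to $H$, the hypothesis $\Energy_{\F^{\rho}}(x|_H)=0$ is equivalent to $\delta x|_H=0$. This means that every component $(\delta x)_e$ vanishes for each edge $e$ of $H$. For an edge $e$ that instantiates $a:i\to j$ and is oriented from $u$ to $v$, the identity $(\delta x)_e=x_v-A_a x_u$ then gives the local transport law $x_v=A_a x_u$. One point must be fixed here: the orientation used in $\delta$ must agree with the relation direction $s(a)\to t(a)$. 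Definition~\ref{def:induced-sheaf} builds this in, since the restriction $A_a$ sits at the source endpoint, so the orientation convention is not an independent choice. I will state this explicitly, because the hypothesis ``$(u_{m-1},u_m)$ is an edge labelled $a_m$'' must be read as ``oriented from $u_{m-1}$ to $u_m$.''

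Second, I will prove $A_{a_m}\cdots A_{a_1}x_{u_0}=x_{u_m}$ by induction on $m$. The case $m=0$ is trivial, with the empty product read as $\id$, consistent with $e_i$. For the inductive step, apply the local law to the edge $(u_{m-1},u_m)$ and left-multiply by nothing further:
\[
x_{u_m}=A_{a_m}x_{u_{m-1}}=A_{a_m}A_{a_{m-1}}\cdots A_{a_1}x_{u_0}.
\]
Taking $m=n$ and using the definition $A_p=A_{a_n}\cdots A_{a_1}$ gives $A_p x_{u_0}=x_{u_n}$. Type-correctness is automatic: each $u_m$ has sort $t(a_m)=s(a_{m+1})$, so $x_{u_m}\in V_{t(a_m)}$ and every composition is defined.

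Third, for two realized paths $p$ and $q$ in $H$ from $u_0$ to $w$, both give $A_p x_{u_0}=x_w=A_q x_{u_0}$. Subtracting yields $(A_p-A_q)x_{u_0}=0$. For a declared pair $(p,q)\in\mathcal{E}_J$ realized in $H$, this is exactly the statement that the path equation holds on the section. I will remark that the conclusion is pointwise, on the vector $x_{u_0}$, rather than the operator identity $A_p=A_q$ of Proposition~\ref{prop:factorization}. Operator-level equality would follow only if the values $x_{u_0}$ over realized instances span $V_{s(p)}$.

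The main obstacle is not computational. It lies in the bookkeeping of the support graph. $G$ is declared undirected, yet $\delta$ needs an orientation, and $G$ may contain parallel edges with different labels between the same vertices. I would resolve this by fixing the orientation of each edge to be the relation direction, and by treating $H$ as a multigraph so that the labelled edge $(u_{m-1},u_m)$ picks out the right restriction map $A_{a_m}$. Once this is in place, the rest is the two-line induction above.
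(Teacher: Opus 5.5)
Your proposal is correct and matches the paper's proof step for step: zero energy gives $\delta x=0$, each labelled edge yields $x_{u_m}=A_{a_m}x_{u_{m-1}}$, induction gives $A_p x_{u_0}=x_{u_n}$, and applying this to $p$ and $q$ and subtracting gives $(A_p-A_q)x_{u_0}=0$. One small refinement: the orientation chosen for $\delta$ does not actually need to agree with the relation direction, since reversing it only negates $(\delta x)_e$ and leaves $\ker\delta$ unchanged. What matters is only that $A_{a_m}$ sits at the incidence of the source-side vertex $u_{m-1}$, and Definition~\ref{def:induced-sheaf} already fixes this.
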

\begin{proof}
Zero energy gives $\delta x=0$, hence $x_{u_m}=A_{a_m}x_{u_{m-1}}$ for each edge of $H$ by Definition~\ref{def:induced-sheaf}. Induction on $m$ yields $x_{u_m}=A_{a_m}\cdots A_{a_1}x_{u_0}$, and $m=n$ is the first claim. For the second, apply the first to $p$ and to $q$ and subtract, using $x_{w}=A_px_{u_0}=A_qx_{u_0}$.
\end{proof}

Proposition~\ref{prop:section-path-eq} converts the sheaf layer from a decorative addition into a diagnostic with a stated meaning. Low sheaf energy on a retrieved neighborhood is evidence that the representation layer's compositional constraints are being respected \emph{on that neighborhood}, which is a statement about $\Lcal_{\mathrm{alg}}$ localized to the retrieved region rather than averaged over training. The converse fails, and usefully so: $A_p=A_q$ as operators does not imply that any particular $0$-cochain is a section, so the two diagnostics are genuinely different measurements.

\begin{remark}[Scope of the certification]
The proposition is a statement about the section, not about the operators. It asserts $(A_p-A_q)x_{u_0}=0$, not $A_p=A_q$ as operators. A retrieved subgraph can therefore have zero sheaf energy while the declared path equation fails outside the span of the observed entity states. This is the appropriate strength for a retrieval-time diagnostic: the system observes only finitely many entity states, and the guarantee is local to them.
\end{remark}

In PAGR, we use the energy as a \emph{consistency signal}: a retrieved subgraph with high local disagreement can be down-ranked, flagged, or split before generation.

\begin{principle}[Consistency is not truth]
Low sheaf energy indicates that local representations agree under the chosen restrictions. It does not imply that the underlying factual assertions are correct.
\end{principle}

\subsection{Sheaf cohomology and section-existence}
\label{sec:sheaf-cohomology}

The coboundary $\delta:C^0(G;\F)\to C^1(G;\F)$, where
\[
C^1(G;\F)\;\coloneqq\;\bigoplus_{e\in E_G}\F(e),
\]
is the first differential of the cellular cochain complex. Because $G$ is a graph (a one-dimensional CW complex), the complex terminates at degree~$1$:
\[
0\;\longrightarrow\; C^0(G;\F)\;\xrightarrow{\;\delta\;}\; C^1(G;\F)\;\longrightarrow\;0,
\]
with cohomology groups
\begin{align*}
H^0(G;\F)&\;=\;\ker\delta\quad\text{(global sections; cf.\ Proposition~\ref{prop:zero-energy})},\\
H^1(G;\F)&\;=\;C^1(G;\F)\,/\,\operatorname{im}(\delta).
\end{align*}

The primary diagnostic built from these groups is $H^0$, not $H^1$.

\begin{definition}[Consistency dimension]
\label{def:consistency-dim}
The \emph{consistency dimension} of a retrieved subgraph $H\subseteq G$ under $\rho$ is
\[
\sigma_\rho(H)\;\coloneqq\;\dim_k H^0(H;\F^{\rho}|_H).
\]
\end{definition}

So $\sigma_\rho(H)>0$ means that a nonzero globally consistent entity-state assignment exists on $H$, and $\sigma_\rho(H)=0$ means that the zero cochain is the only one. In general $\sigma_\rho(H)$ is computed directly as $\dim\ker\delta$, a rank computation on the coboundary; no further structure is assumed, and in particular the relation operators $A_a$ need not be invertible.

When they are, the quantity acquires a classical interpretation.

\begin{proposition}[Holonomy form under invertible transport]
\label{prop:holonomy}
Suppose every relation operator realized on a connected $H$ is an isomorphism. Then $\F^{\rho}|_H$ is a local system: all restriction maps are invertible, since $\F^{\rho}_{v\unlhd e}=\id$ and $\F^{\rho}_{u\unlhd e}=A_a$. Fixing a base vertex $v_0$, transport along any closed walk $\gamma$ based at $v_0$ is a well-defined automorphism $\operatorname{Hol}_\gamma$ of $V_{\tau(v_0)}$, and restriction to the base stalk identifies global sections with holonomy-invariant base states:
\[
\sigma_\rho(H)\;=\;\dim\!\!\bigcap_{\gamma\in\pi_1(H,v_0)}\!\!\ker\bigl(\operatorname{Hol}_\gamma-I\bigr),
\]
where $\pi_1(H,v_0)$ is free of rank $c$, so the intersection may be taken over the $c$ fundamental cycles of a spanning tree.
\end{proposition}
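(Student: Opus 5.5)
The plan is to use the invertibility of the restriction maps to show that a global section is determined by its value at the base vertex. Then I compute exactly which base values extend consistently.

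First, fix a spanning tree $T$ of the connected graph $H$, rooted at $v_0$. For each vertex $v$ there is a unique tree path from $v_0$ to $v$. Along each tree edge $e$ between $u$ and $v$, the section condition $(\delta x)_e=0$ reads $x_v=A_a x_u$ if $e$ is oriented from $u$ to $v$. If $e$ is oriented the other way, it reads $x_u=A_a x_v$, i.e.\ $x_v=A_a^{-1}x_u$, using invertibility. Composing these edge transports gives an isomorphism $P_v:V_{\tau(v_0)}\to V_{\tau(v)}$ with $P_{v_0}=I$.

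Define the restriction map $r:H^0(H;\F^{\rho}|_H)\to V_{\tau(v_0)}$ by $x\mapsto x_{v_0}$. The tree conditions alone force $x_v=P_v x_{v_0}$, so $r$ is injective.

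Next I identify the image of $r$. Given $w\in V_{\tau(v_0)}$, set $x_v=P_v w$. By construction this satisfies every tree-edge condition. Each non-tree edge $e$ between $u$ and $v$ determines a fundamental cycle $\gamma_e$: the tree path from $v_0$ to $u$, then $e$, then the tree path back from $v$ to $v_0$. Orient $\gamma_e$ so that it crosses $e$ in the direction of $e$'s orientation, say from $u$ to $v$. Its holonomy is
\[
\operatorname{Hol}_{\gamma_e}=P_v^{-1}A_eP_u ,
\]
where $A_e$ is the transport along $e$ (namely $A_a$, or its inverse, as determined by the labelling). The condition $(\delta x)_e=0$ becomes $P_vw=A_eP_uw$, which is equivalent to $\operatorname{Hol}_{\gamma_e}w=w$. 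Hence
\[
\operatorname{im} r=\bigcap_e\ker\bigl(\operatorname{Hol}_{\gamma_e}-I\bigr),
\]
with the intersection over the $c=|E_H|-|V_H|+1$ non-tree edges. Taking dimensions gives the formula over the fundamental cycles.

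It remains to replace the fundamental cycles by all of $\pi_1(H,v_0)$. A standard fact is that $\pi_1$ of a connected graph is free on the classes of the $\gamma_e$. Transport along closed walks defines a homomorphism (or antihomomorphism, depending on the composition convention) $\pi_1(H,v_0)\to GL(V_{\tau(v_0)})$. This is well defined because backtracking an edge contributes $A\,A^{-1}=I$, which again uses invertibility. A vector fixed by the generators is then fixed by every product of them and their inverses. Conversely, fixing all loops includes fixing the generators. So the two intersections coincide. This also shows that $\operatorname{Hol}_\gamma$ depends only on the homotopy class of $\gamma$, as the statement asserts.

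The main obstacle is bookkeeping rather than depth. Edge orientations in $H$ need not align with walk directions, so every backward traversal must be shown to contribute an inverse operator. One must also check that this yields a well-defined action on homotopy classes. I would handle both points through the uniform convention that the transport across an edge in either direction is the unique isomorphism solving the edge's section equation.
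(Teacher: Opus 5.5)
Your proposal is correct and follows the same route as the paper's proof. Transport along a spanning tree, using invertibility to cross edges against their orientation, shows that a section is determined by its value at $v_0$, and a base state extends exactly when it is fixed by the holonomy of each fundamental cycle. Your version makes explicit what the paper's short sketch leaves implicit: the formula $\operatorname{Hol}_{\gamma_e}=P_v^{-1}A_eP_u$, and the passage from the $c$ free generators to all of $\pi_1(H,v_0)$ via homotopy-invariance of transport.
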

\begin{proof}
A section is determined by its value at $v_0$, since invertibility lets one transport uniquely along every edge of a spanning tree in either direction; and a base state extends consistently to a section precisely when it is unchanged by transport around each independent cycle.
\end{proof}

\begin{remark}[Why invertibility is needed for the holonomy form, but not for $\sigma_\rho$]
\label{rem:holonomy-caveat}
Without invertibility the holonomy statement is unavailable, and not merely unproved. Transport along a spanning tree may require traversing an edge against its direction, which asks for a preimage under $A_a$ that need neither exist nor be unique; and the round-trip operator of a cycle is undefined when an operator along it is singular. Take $H$ with sorts $u,v,w$, two relations $u\to w$ realized by $A_a$ and $A_c$, and one relation $v\to w$ realized by $A_b$: every spanning tree of $\bar H$ reaches $v$ through $w$, so tree propagation cannot even begin unless $A_b$ is invertible, and the holonomy of the $u$--$w$ cycle would be $A_c^{-1}A_a$. The consistency dimension is nevertheless perfectly well defined in this case and is computed from $\delta$ as usual. This is why Definition~\ref{def:consistency-dim} is stated as $\dim H^0$ rather than through holonomy: the rank computation is the general object, and Proposition~\ref{prop:holonomy} is the special case that explains what it measures.
\end{remark}

\begin{remark}[Why $H^1$ is not the consistency diagnostic]
\label{rem:h1-not-obstruction}
On a graph with $\sum_{v}d_{\tau(v)}=\sum_{e}d_{t(\lambda(e))}$---the balanced case, typical when all sorts have the same embedding dimension---the Euler characteristic identity $\dim H^0-\dim H^1=0$ forces $\sigma_\rho(H)=\dim H^1(H;\F^{\rho}|_H)$. In a triangle with consistent operators ($A_c=A_bA_a$) and stalks $k^3$, for instance, $\sigma_\rho=3$ and $\dim H^1=3$; with generic inconsistent operators, $\sigma_\rho=0$ and $\dim H^1=0$. So $H^1>0$ co-occurs with \emph{existence} of consistent configurations, not their absence. More generally, when edge stalks outweigh vertex stalks ($\chi<0$), the identity forces $\dim H^1\ge\sigma_\rho+|\chi|>0$ regardless of the operators, making $H^1>0$ vacuous as a diagnostic. The role of $H^1$ in this framework is topological: it measures the excess of overlap sections over those explainable by restriction from local sections, which is a property of the graph's cycle structure and the relative operator images rather than of operator consistency directly.
\end{remark}

The next theorem covers the multi-seed setting: Stage~II expands each seed to a local acyclic neighborhood, and the question is whether neighborhoods from distinct seeds can be assembled consistently. Its hypothesis is that the individual neighborhoods have vanishing $H^1$, and that condition requires care: it does \emph{not} follow from acyclicity alone.

\begin{lemma}[Vanishing on expansion forests]
\label{lem:forest-vanishing}
Let $H\subseteq G$ be a forest in which no vertex is the head of two edges---for instance an expansion tree produced by Stage~II from a single seed, in which each reached entity is recorded together with one witnessing traversal. Then $H^1(H;\F^{\rho}|_H)=0$.
\end{lemma}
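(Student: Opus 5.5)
Since $H^1(H;\F^{\rho}|_H)=C^1/\im\delta$, it suffices to show that $\delta:C^0(H)\to C^1(H)$ is surjective. I will prove surjectivity one tree at a time. The cochain complex of a forest is the direct sum of the complexes of its connected components, so I may assume $H$ is a tree.

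For a given target $y=(y_e)_{e\in E_H}\in C^1$, I will construct a preimage $x$ explicitly. Recall from Definition~\ref{def:induced-sheaf} that for an edge $e$ labelled $a:i\to j$ and oriented $u\to v$,
\[
(\delta x)_e=x_v-A_ax_u .
\]
The head stalk enters through the identity, so it can be solved for freely. The tail stalk enters through $A_a$, which may be singular, so it should never be the unknown. The hypothesis that no vertex is the head of two edges is exactly what lets every edge be solved at its head.

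\textbf{Key step.} In a tree with $n$ vertices and $n-1$ edges, each vertex is the head of at most one edge. Hence exactly one vertex $r$ is the head of no edge, and every other vertex is the head of exactly one edge $e_v$. Each such edge points away from $r$. To see this, walk backwards from any vertex along the unique incoming edges. The walk cannot cycle because $H$ is acyclic, so it terminates at a vertex with no incoming edge, which must be $r$. Consequently $H$ is an out-arborescence rooted at $r$, and every edge's tail is its head's parent.

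\textbf{Construction.} Set $x_r=0$ (any value works) and proceed in breadth-first order from $r$. For each non-root $v$ with parent edge $e_v=(u\to v)$ labelled $a$, define
\[
x_v=y_{e_v}+A_ax_u .
\]
Here $x_u$ is already defined because $u$ is the parent of $v$. Each edge is the parent edge of exactly one vertex, so each edge equation $(\delta x)_e=y_e$ is imposed exactly once. This gives $\delta x=y$, so $\delta$ is surjective and $H^1=0$.

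\textbf{Main obstacle.} The only delicate point is the combinatorial claim that in-degree at most one on a tree forces an out-arborescence, with every edge oriented away from a unique root. Once that holds, no inversion of any $A_a$ is needed, which is consistent with Remark~\ref{rem:holonomy-caveat}.

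As a consistency check, the Euler characteristic confirms the count. We get $\dim H^0=\dim C^0-\dim C^1=d_{\tau(r)}$, since each non-root vertex stalk is paired with its incoming edge stalk $V_{t(\lambda(e_v))}=V_{\tau(v)}$. The sections are parametrized freely by $x_r$, as expected.
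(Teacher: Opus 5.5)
Your proof is correct and follows the paper's argument essentially verbatim. You show $\delta$ is surjective by propagating outward from a root in each component, setting $x_v=y_{e_v}+A_a x_u$ along each vertex's unique incoming edge so that no $A_a$ ever needs inverting. Your explicit check that the root must be the unique vertex of in-degree zero, with every edge then oriented away from it, spells out what the paper's phrase ``root each component'' leaves implicit.
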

\begin{proof}
By Definition~\ref{def:induced-sheaf} the restriction at the head of an edge $e=(u,v)$ labelled $a$ is $\F^{\rho}_{v\unlhd e}=\id_{V_{t(a)}}$, so $(\delta x)_e=x_v-A_ax_u$. Given $c\in C^1(H;\F^{\rho})$, root each component of $H$ and traverse outward. Each non-root vertex $v$ is the head of exactly one edge $e=(u,v)$, and $u$ has already been assigned, so setting $x_v\coloneqq c_e+A_ax_u$ assigns every vertex exactly once and gives $(\delta x)_e=c_e$ on every edge. Hence $\delta$ is surjective and $H^1=0$.
\end{proof}

\begin{remark}[Acyclicity alone is not sufficient]
\label{rem:forest-caveat}
The in-degree hypothesis cannot be dropped. If two edges of $H$ share a head, say $u_1\xrightarrow{a}w\xleftarrow{b}u_2$, then realizing a target $(c_{e_1},c_{e_2})$ requires
\[
x_w\in\bigl(c_{e_1}+\im A_a\bigr)\cap\bigl(c_{e_2}+\im A_b\bigr),
\]
which is empty for suitable $c$ unless $A_a$ and $A_b$ are surjective. Taking $V_i=k^2$ for all three sorts and $A_a,A_b$ of rank one gives $\dim H^1=1$ on this three-vertex tree. For a general cellular sheaf the situation is worse still, since neither restriction map need be surjective. An implementation that retains several witnessing traversals per reached entity should therefore verify $H^1(U;\F^{\rho}|_U)=H^1(V;\F^{\rho}|_V)=0$ directly---a rank computation on $\delta$---rather than infer it from acyclicity. Surjectivity of every $A_a$ is a sufficient alternative hypothesis.
\end{remark}

\begin{theorem}[Mayer--Vietoris gluing]
\label{thm:mv-patching}
Let $H=U\cup V$ where $U$ and $V$ are retrieved neighborhoods satisfying $H^1(U;\F^{\rho}|_U)=H^1(V;\F^{\rho}|_V)=0$---for instance expansion forests in the sense of Lemma~\ref{lem:forest-vanishing}. There is a short exact sequence of cochain complexes
\[
0\to C^{\bullet}(H;\F^{\rho})\to C^{\bullet}(U;\F^{\rho})\oplus C^{\bullet}(V;\F^{\rho})\to C^{\bullet}(U\cap V;\F^{\rho})\to 0,
\]
whose associated long exact sequence in cohomology, truncated using $H^1(U)=H^1(V)=0$, gives
\[
0 \;\to\; H^0(H;\F^{\rho}) \;\to\; H^0(U;\F^{\rho})\oplus H^0(V;\F^{\rho}) \;\xrightarrow{\;r\;}\; H^0(U\cap V;\F^{\rho}) \;\xrightarrow{\;\partial\;}\; H^1(H;\F^{\rho}) \;\to\; 0,
\]
where $r(s_U,s_V)=s_U|_{U\cap V}-s_V|_{U\cap V}$. Consequently:
\begin{enumerate}[leftmargin=*,label=(\roman*)]
\item A pair of local sections $(s_U,s_V)\in H^0(U;\F^{\rho})\oplus H^0(V;\F^{\rho})$ extends to a global section on $H$ if and only if $s_U|_{U\cap V}=s_V|_{U\cap V}$.
\item $\sigma_\rho(H)=\dim\ker(r)$: the consistency dimension of $H$ equals the number of independent ways local sections on $U$ and $V$ can be consistently matched on their overlap.
\item $H^1(H;\F^{\rho})\cong H^0(U\cap V;\F^{\rho})/\operatorname{im}(r)$: a topological residual measuring how much the overlap's section space exceeds what restriction from $U$ and $V$ can reach.
\end{enumerate}
\end{theorem}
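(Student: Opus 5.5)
The plan is to build the Mayer--Vietoris sequence directly at the level of cellular cochains, read off the long exact sequence from the snake lemma, and then derive (i)--(iii) by exactness. Throughout, $U$ and $V$ are subgraphs of $G$ (vertex sets together with edge sets closed under taking endpoints). $H=U\cup V$ means that every vertex and every edge of $H$ lies in $U$ or in $V$. $U\cap V$ is the subgraph consisting of the common vertices and the common edges; it is again a subgraph, because a common edge has both endpoints in both $U$ and $V$.

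\textbf{Step 1: the short exact sequence.} In each degree $n\in\{0,1\}$ I will define
\[
\alpha(x)=(x|_U,x|_V),\qquad \beta(y,z)=y|_{U\cap V}-z|_{U\cap V},
\]
where restriction of a cochain means projection onto the summands $\F^{\rho}(v)$ or $\F^{\rho}(e)$ indexed by cells of the smaller subgraph. Both maps commute with $\delta$. The reason is that $(\delta x)_e$ depends only on $x_u,x_v$ for the endpoints of $e$, and these lie in any subgraph containing $e$; the stalks and restriction maps on $U$, $V$ and $U\cap V$ are the same as those of $\F^{\rho}$ by Definition~\ref{def:induced-sheaf}.

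Exactness is then checked cell by cell. $\alpha$ is injective because every cell of $H$ lies in $U$ or $V$. $\beta\circ\alpha=0$ is immediate. For $\ker\beta\subseteq\operatorname{im}\alpha$, a pair agreeing on common cells glues to a well-defined cochain on $H$. For surjectivity of $\beta$, a cochain on $U\cap V$ is extended by zero to $U$, and $0$ is taken on $V$. The one point needing care is degree $1$: there we use that every edge of $H$ is an edge of $U$ or of $V$, which is part of what $H=U\cup V$ means. I regard this bookkeeping, together with the requirement that $U\cap V$ be formed as a subgraph and not merely a vertex set, as the only real obstacle. Everything after it is formal.

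\textbf{Step 2: the long exact sequence.} Since all complexes are concentrated in degrees $0,1$, the snake lemma gives
\[
0\to H^0(H)\to H^0(U)\oplus H^0(V)\xrightarrow{\;r\;} H^0(U\cap V)\xrightarrow{\;\partial\;} H^1(H)\to H^1(U)\oplus H^1(V)\to H^1(U\cap V)\to 0.
\]
Here $r$ is induced by $\beta$, so $r(s_U,s_V)=s_U|_{U\cap V}-s_V|_{U\cap V}$. Substituting the hypothesis $H^1(U)=H^1(V)=0$ truncates the sequence to the displayed four-term one, and in particular makes $\partial$ surjective. In the forest case the hypothesis is supplied by Lemma~\ref{lem:forest-vanishing}.

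\textbf{Step 3: the consequences.}
\begin{enumerate}[leftmargin=*,label=(\roman*)]
\item Exactness at $H^0(U)\oplus H^0(V)$ says that $(s_U,s_V)$ lies in the image of $H^0(H)$ iff $r(s_U,s_V)=0$, i.e.\ iff the two sections agree on the overlap.
\item Injectivity of $H^0(H)\to H^0(U)\oplus H^0(V)$ identifies $H^0(H)$ with $\ker r$. Hence $\sigma_\rho(H)=\dim\ker r$ by Definition~\ref{def:consistency-dim}.
\item Surjectivity of $\partial$ together with $\ker\partial=\operatorname{im} r$ gives $H^1(H)\cong H^0(U\cap V)/\operatorname{im}(r)$ by the first isomorphism theorem.
\end{enumerate}
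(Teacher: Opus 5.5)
Your proposal is correct and follows essentially the same route as the paper. Both arguments use the cellwise-exact restriction sequence of cochains and the associated long exact sequence (you derive it via the snake lemma, the paper cites it as standard), then truncate using $H^1(U)=H^1(V)=0$ and read (i)--(iii) off exactness. Your explicit requirement that $H=U\cup V$ and $U\cap V$ be formed as subgraphs (cells, not merely vertex sets) is the bookkeeping that the paper's degree-$1$ exactness claim relies on without stating it.
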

\begin{proof}
The short exact sequence of cochain complexes is standard: in degree $0$, $C^0(H;\F^{\rho})\hookrightarrow C^0(U;\F^{\rho})\oplus C^0(V;\F^{\rho})\twoheadrightarrow C^0(U\cap V;\F^{\rho})$ is the restriction sequence, exact because every function on $U\cap V$ extends to $U$ and $V$ separately (the sheaf assigns spaces to vertices and edges, so surjectivity holds by definition). In degree $1$, both sides are edge-stalk direct sums and the same argument applies. The long exact sequence is then the standard algebraic consequence \citep{curry2014}. Truncation at $H^1(U)=H^1(V)=0$ yields the displayed five-term sequence. Claim~(i) is exactness at $H^0(U;\F^{\rho})\oplus H^0(V;\F^{\rho})$ and the identification of the kernel of $r$ with $H^0(H;\F^{\rho})$. Claims~(ii) and~(iii) follow from the five-term sequence by definition.
\end{proof}

The Euler characteristic identity
\[
\sigma_\rho(H) - \dim H^1(H;\F^{\rho}) \;=\; \sum_{v\in H}d_{\tau(v)} - \sum_{e\in H}d_{t(\lambda(e))}\;\eqqcolon\;\chi(H;\F^{\rho})
\]
is computable from the schema dimension vector alone, independently of $\rho$. When $\chi\ge0$ it implies $\sigma_\rho(H)\ge\dim H^1(H;\F^{\rho})\ge0$; when $\chi<0$ it forces $\dim H^1\ge|\chi|>0$ regardless of operators, which is why $H^1$ is not a useful consistency signal in that regime.

\begin{remark}[Two-tier consistency diagnosis]
\label{rem:h1-vs-energy}
The two diagnostics are complementary. $\Energy_{\F^{\rho}}(x)=0$ witnesses that a \emph{specific} observed configuration $x$ is globally consistent. $\sigma_\rho(H)>0$ certifies that a globally consistent configuration \emph{exists in principle}, independently of which particular entity states are observed: it is a property of the operators $\{A_a\}$ and the graph topology together. A retrieved context with $\sigma_\rho(H)>0$ but positive observed energy indicates that the entity states are not yet in a consistent configuration, but one exists and could be recovered by adjusting them (a training or inference task). A context with $\sigma_\rho(H)=0$ certifies that no \emph{nonzero} globally consistent assignment exists for the current operators---the zero cochain is always a section---which is a structural operator-inconsistency rather than a configurational gap. Reporting both $\Energy_{\F^{\rho}}$ and $\sigma_\rho$ is therefore more informative than energy alone.
\end{remark}

\section{A Unified Retrieval Operator}

We now combine the layers without collapsing them.

Let $q$ be a query.

\subsection{Stage I: geometric seeding}
For each eligible entity $x$, compute a semantic score $s_{\mathrm{geo}}(q,x)$. Select
\[
S_k(q)=\operatorname{TopK}_{x\in P}s_{\mathrm{geo}}(q,x).
\]
This stage maximizes semantic recall and may use dense text encoders, type-aware embeddings, or mixed geometry.

\subsection{Stage II: admissible symbolic expansion}
Let $L\subseteq Q_1^{\ast}$ be a regular language of allowed relation-label paths and let $h$ be a hop bound. For a path $p=a_n\cdots a_1$ in the schema write
\[
\Cl_{\T}(E)_p \;=\; \Cl_{\T}(E)_{a_n}\circ\cdots\circ\Cl_{\T}(E)_{a_1}
\;\subseteq\; P_{s(p)}\times P_{t(p)}
\]
for the relational composite of the certified closure along $p$, where $\Cl_{\T}(E)_a$ denotes the $a$-component of the closure and $\circ$ is relational composition. Define
\[
\Reach_{L,h}(S)=
\{y:\exists x\in S,\ \exists p\in L,\ |p|\le h,\ (x,y)\in \Cl_{\T}(E)_p\}.
\]
This is a bounded regular-path query over certified closure. Candidate/hypothesis edges may be explored in a separate channel but are never silently merged with this set.

\begin{remark}[Paths navigate; edges assert]
\label{rem:paths-vs-edges}
A witness path $p$ from $x$ to $y$ is a route, not an assertion: the framework does \emph{not} claim that the composite relation $p$ holds of $(x,y)$ unless $\T$ separately licenses that composite as an atom. Accordingly the certified retrieval output defined in Section~\ref{sec:noninterference} consists of the individual certified \emph{edges} traversed, each of which is an element of $\Cl_{\T}(E)$, together with the path structure presented as navigational metadata. Conflating the two would silently reintroduce the equation--inclusion confusion that Design Principle~\ref{prin:eq-incl} rules out.
\end{remark}

\subsection{Stage III: proof attachment}
Every certified edge exposed by Stage~II receives a certificate $\delta$ and provenance object $\Prov(\cdot)$. Results without a valid certificate remain candidates rather than certified context.

\subsection{Stage IV: geometric-algebraic scoring}
For a retrieved endpoint $y$, define components such as
\[
\begin{aligned}
s_{\mathrm{sem}}(y)&=s_{\mathrm{geo}}(q,y),\\
s_{\mathrm{path}}(y)&=g(|p_y|,\operatorname{type}(p_y)),\\
s_{\mathrm{rep}}(y)&=\exp(-\beta r_{p_y}(x,y)),\\
s_{\mathrm{prov}}(y)&=\varphi(\Prov(y)),\\
s_{\mathrm{cons}}(y)&=\exp(-\gamma \Energy_{\F}(x|_{N_y})).
\end{aligned}
\]
A ranking score may combine normalized components,
\[
\Score(q,y)=\sum_{j}\omega_j \widetilde{s}_j(q,y),
\qquad \omega_j\ge0,
\]
subject to a hard epistemic gate: ranking cannot convert a hypothesis into a certified fact.

\subsection{Stage V: graph diffusion}
Within the certified candidate subgraph, one may use personalized PageRank \citep{jeh2003} or another diffusion score, as GraphRAG systems built on entity graphs already do \citep{gutierrez2024}. Let $W$ be the column-stochastic transition matrix of the certified candidate subgraph and $v_q$ a probability distribution supported on seed nodes. (We use $W$ rather than the more common $P$ to avoid collision with the typed entity universe $P=\bigsqcup_i P_i$.) The personalization equation $\pi_q=\alpha W\pi_q+(1-\alpha)v_q$ has the unique solution
\[
\pi_q=(1-\alpha)(I-\alpha W)^{-1}v_q,
\qquad 0<\alpha<1,
\]
where $I-\alpha W$ is invertible because $\alpha\|W\|_1=\alpha<1$ bounds the spectral radius of $\alpha W$. The vector $v_q$, rather than a nonexistent ``query node,'' is the mathematically natural personalization object; the seed distribution may be taken proportional to the calibrated scores of Definition~\ref{def:calibrated} restricted to $S_k(q)$.

\subsection{Stage VI: generative synthesis}
The LLM receives a context package
\[
C_q=\{(f,\text{source}(f),\delta_f,\Prov(f),\mathsf{status}(f))\}
\]
plus optional candidate hypotheses explicitly marked as such. The model may summarize, compare, explain, or abstain, but it is not the authority that determines membership in $G^{\star}$.

\begin{algorithm}[t]
\caption{Proof-carrying geometric retrieval}
\label{alg:pcgr}
\begin{algorithmic}[1]
\Require query $q$, PAGR system $\KG$, seed count $k$, hop bound $h$
\State $S\gets \operatorname{TopK}_{x\in P}s_{\mathrm{geo}}(q,x)$
\State $C\gets \emptyset$
\ForAll{$x\in S$}
  \ForAll{admissible paths $p\in L$ with $|p|\le h$}
    \ForAll{edges $e$ traversed by a realization of $p$ from $x$ in $\Cl_{\T}(E)$}
      \State construct or retrieve derivation certificate $\delta_{e}$
      \If{$\delta_{e}$ verifies}
        \State compute provenance, representation residual, and consistency features for $e$
        \State add the certified edge $e$ to $C$, tagged with its witnessing path
      \EndIf
    \EndFor
  \EndFor
\EndFor
\State rank $C$ using gauge-invariant features and optional graph diffusion
\State \Return proof-carrying context $C_q$
\end{algorithmic}
\end{algorithm}

\section{Non-Interference, Soundness, and Conditional Completeness}
\label{sec:noninterference}

Collect the learned components into a single configuration
\[
\theta=(\Phi,\rho,\F,\omega,\text{LLM}),
\]
and let $\mathcal{R}^{\theta}_{\mathrm{cert}}(q)$ denote the set of atomic facts exposed to the generator as certified by Algorithm~\ref{alg:pcgr} under configuration $\theta$ (edges, in the sense of Remark~\ref{rem:paths-vs-edges}). Write $\Theta$ for the space of all such configurations.

The result below is often stated as a soundness theorem, but soundness is the weaker half of what the architecture actually buys. The stronger and more informative statement is an invariance property: the \emph{certification criterion is a function of the symbolic data alone}, uniformly over $\Theta$. This is a non-interference statement in the sense of \citet{goguen1982}, with epistemic status in place of a security level: no variation in the low-authority (learned) components can produce a variation in the high-authority (certified) predicate.

\begin{theorem}[Epistemic non-interference]
\label{thm:noninterference}
For every query $q$,
\[
\bigcup_{\theta\in\Theta}\mathcal{R}^{\theta}_{\mathrm{cert}}(q)\;\subseteq\;\Cl_{\T}(E).
\]
Equivalently, the \emph{criterion} by which a retrieved fact qualifies as certified is invariant under arbitrary replacement of $\Phi$, $\rho$, $\F$, the ranking weights $\omega$, and the language model. Replacement may well change \emph{which} certified facts are retrieved---the sets $\mathcal{R}^{\theta}_{\mathrm{cert}}(q)$ genuinely vary with $\theta$, and Theorem~\ref{thm:completeness} characterizes how---but not the standard a fact must meet to enter them: for every $\theta\in\Theta$ and every $f$,
\[
f\in\mathcal{R}^{\theta}_{\mathrm{cert}}(q)\ \Longrightarrow\ f\in\Cl_{\T}(E),
\]
and the predicate on the right-hand side does not depend on $\theta$. Learned components may therefore alter \emph{whether} a certified fact is retrieved, but not the criterion by which an exposed fact qualifies as certified.
\end{theorem}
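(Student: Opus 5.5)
The plan is to prove the pointwise implication $f\in\mathcal{R}^{\theta}_{\mathrm{cert}}(q)\Rightarrow f\in\Cl_{\T}(E)$ for an arbitrary fixed $\theta\in\Theta$. The union inclusion then follows by taking the union over $\theta$. The invariance clause follows because the right-hand predicate is defined from $(Q,P,E,\T)$ alone, and none of these appears in $\theta$. The argument is a trace of Algorithm~\ref{alg:pcgr}: I would identify the single line at which a fact enters the set $C$ and check that the guard on that line refers only to symbolic data.

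The steps, in order, are as follows.

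First, I would fix $\theta$ and $q$ and note that $\mathcal{R}^{\theta}_{\mathrm{cert}}(q)$ consists of the certified edges placed into $C$ (Remark~\ref{rem:paths-vs-edges}). The final ranking line only reorders $C$ and never enlarges it. Stage~VI hands the generator $C_q$, and any hypotheses it also receives are explicitly marked as such, so they are not part of $\mathcal{R}^{\theta}_{\mathrm{cert}}(q)$.

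Second, I would observe that the only insertion into $C$ is inside the innermost loop. That loop ranges over edges $e$ traversed by a realization of $p$ in $\Cl_{\T}(E)$, so every candidate $e$ is already a component edge of $\Cl_{\T}(E)_{a}$ for some label $a$, and hence belongs to $\Cl_{\T}(E)$.

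Third, I would record an independent second guard: $e$ is added only if $\delta_e$ verifies. A valid certificate is a finite derivation tree whose leaves are admitted facts of $E$ and whose internal nodes instantiate rules of $\T$. An induction on the height of $\delta_e$, using the immediate-consequence operator exactly as in Theorem~\ref{thm:closure-sound}, gives $e\in T_{\T}^{\mathrm{ht}(\delta_e)}(E)\subseteq\Cl_{\T}(E)$. This makes the conclusion robust even if the enumeration step were replaced by a learned proposer.

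Fourth, I would note that $\theta$ enters the algorithm only through $S$ (via $\Phi$ and $s_{\mathrm{geo}}$), through the feature computations ($\rho$, $\F$), through the ordering ($\omega$), and through the downstream generator (LLM). None of these appears in either guard.

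The main obstacle is not mathematical depth. It is making precise that the learned components have no write access to the guard. This means fixing the semantics of ``$\delta_e$ verifies'' as a checker whose inputs are $(e,\delta_e,E,\T)$ only, as in the cheap-verification proposition, and making sure the LLM in Stage~VI cannot add to $C$. I would handle this by formalizing the algorithm as a function $C=\Gamma(S,E,\T)$, where the seed set $S$ is the only $\theta$-dependent input and the per-edge filter is $\theta$-free. Then $\mathcal{R}^{\theta}_{\mathrm{cert}}(q)\subseteq\{e: \mathrm{Verify}(e,\delta_e;E,\T)\}\subseteq\Cl_{\T}(E)$ uniformly in $\theta$. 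This also explains why which facts are retrieved varies with $\theta$, through $S$, while the criterion does not.
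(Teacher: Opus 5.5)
Your proposal is correct and follows essentially the same route as the paper's proof. You fix $\theta$, note that learned components enter Algorithm~\ref{alg:pcgr} only through seeding, feature computation and ranking, none of which inserts into $C$, observe that the sole insertion is guarded by a $\theta$-free certificate check implying membership in $\Cl_{\T}(E)$, and take the union over $\Theta$. Your explicit induction on derivation height, and your extra remark that the enumeration already ranges over edges of $\Cl_{\T}(E)$, spell out what the paper obtains by citing Definition~\ref{def:gstar} and Theorem~\ref{thm:closure}, without changing the approach.
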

\begin{proof}
Fix $\theta$. Learned components enter Algorithm~\ref{alg:pcgr} at exactly three points: the seed selection $S\gets\operatorname{TopK}s_{\mathrm{geo}}$, the computation of provenance, residual and consistency features, and the final ranking of $C$. The first only restricts which candidates are enumerated; the second and third only attach or reorder scores. None of the three adds an element to $C$, because the sole insertion into $C$ is guarded by the verification test on $\delta_{e}$. A certificate verifies only if its leaves resolve to admitted evidence and its internal nodes instantiate rules of $\T$, which by Definition~\ref{def:gstar} and Theorem~\ref{thm:closure} witnesses membership in $\Cl_{\T}(E)$. Since $\Cl_{\T}(E)$ depends only on $(E,\T)$ and not on $\theta$, the bound holds uniformly in $\theta$, and taking the union over $\Theta$ preserves it.
\end{proof}

\begin{corollary}[Retrieval soundness]
\label{cor:soundness}
Under Source Correctness and Rule Soundness,
\[
\mathcal{R}^{\theta}_{\mathrm{cert}}(q)\subseteq\At(\mathfrak{M})
\]
for every query $q$ and every configuration $\theta$.
\end{corollary}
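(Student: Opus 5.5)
The corollary follows by chaining the two inclusions the paper has already established. There is no need to re-examine Algorithm~\ref{alg:pcgr}. The first link is Theorem~\ref{thm:noninterference}. For any fixed query $q$ and any configuration $\theta\in\Theta$, it gives $\mathcal{R}^{\theta}_{\mathrm{cert}}(q)\subseteq\bigcup_{\theta'\in\Theta}\mathcal{R}^{\theta'}_{\mathrm{cert}}(q)\subseteq\Cl_{\T}(E)$. The second link is Theorem~\ref{thm:closure-sound}. Under Source Correctness and Rule Soundness, it gives $\Cl_{\T}(E)\subseteq\At(\mathfrak{M})$. Transitivity of inclusion then yields $\mathcal{R}^{\theta}_{\mathrm{cert}}(q)\subseteq\At(\mathfrak{M})$.

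Next I will check that the quantifiers line up. The model $\mathfrak{M}$ is fixed by the Source Correctness assumption before any $q$ or $\theta$ is chosen. The closure $\Cl_{\T}(E)$ depends only on $(E,\T)$, so the middle term is the same set for every $q$ and $\theta$. Hence the conclusion holds uniformly over queries and configurations, as the statement claims.

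The one point needing care is type-correctness. Elements of $\mathcal{R}^{\theta}_{\mathrm{cert}}(q)$ must be typed atomic statements, so that membership in $\At(\mathfrak{M})$ makes sense. Remark~\ref{rem:paths-vs-edges} settles this: the certified output consists of individual certified edges, each an atom of $\Cl_{\T}(E)$. It is not composite path relations, which could fall outside $\At(\mathfrak{M})$ even when every edge along the path is sound. With that noted, there is no genuine obstacle. The substantive work was done in the non-interference theorem and in the inductive soundness argument for the closure.
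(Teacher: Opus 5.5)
Your proposal is correct and matches the paper's proof, which chains Theorem~\ref{thm:noninterference} with Theorem~\ref{thm:closure-sound} by transitivity of inclusion. Your extra checks on the quantifiers, and on the certified output consisting of individual edges (Remark~\ref{rem:paths-vs-edges}), are sound and simply make explicit what the paper leaves implicit.
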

\begin{proof}
Combine Theorem~\ref{thm:noninterference} with Theorem~\ref{thm:closure-sound}.
\end{proof}

\begin{remark}[What this theorem does and does not say]
\label{rem:not-vacuous}
Corollary~\ref{cor:soundness} is immediate from the construction, and we do not present it as a deep result: a system that emits only certificate-checked facts trivially emits only certified facts. The substantive content is in the universal quantifier of Theorem~\ref{thm:noninterference}. It is a statement about an entire design space $\Theta$, not about a single fixed system: an engineer may swap encoders, retrain relation operators, redesign the sheaf, retune $\omega$, or replace the generator, and the certified channel's correctness guarantee requires no re-examination. That is a maintenance and auditing guarantee, and it is precisely the property that fails in architectures where a similarity threshold or an LLM judgment can promote a hypothesis into certified context. The theorem's value is therefore as a specification that a system either satisfies or does not, and Appendix~\ref{app:invariants} lists the invariants that an implementation must enforce to satisfy it.
\end{remark}

\subsection{Conditional completeness}

Soundness bounds precision and says nothing about recall, yet recall is the entire justification for including a geometric layer. We therefore state the complementary bound, which is what makes the seed count $k$ and hop bound $h$ tunable against a stated criterion rather than by taste.

\begin{theorem}[Conditional completeness of certified retrieval]
\label{thm:completeness}
Fix a query $q$, seed count $k$, hop bound $h$, and admissible language $L$, and let $\theta\in\Theta$ be a configuration whose certificate checker is complete for $\T$. Write $S_k^{\theta}(q)$ for the seed set that $\theta$ produces, which depends on $\theta$ through $\Phi$. Let $f=(x',a,y')\in\Cl_{\T}(E)$ be a certified fact. If there exist a seed $x\in S_k^{\theta}(q)$ and a path $p\in L$ with $|p|\le h$ whose realization from $x$ traverses the edge $f$, then $f\in\mathcal{R}^{\theta}_{\mathrm{cert}}(q)$. Consequently
\[
\Cl_{\T}(E)\ \setminus\ \mathcal{R}^{\theta}_{\mathrm{cert}}(q)
\;=\;
\{f\in\Cl_{\T}(E): f\text{ lies on no }L\text{-admissible path of length}\le h\text{ from }S_k^{\theta}(q)\}.
\]
\end{theorem}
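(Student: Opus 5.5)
The proof traces Algorithm~\ref{alg:pcgr} on the given seed and path, then obtains the set identity by pairing that forward implication with a converse read off the algorithm's enumeration structure.

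\textbf{Forward implication.} Suppose $x\in S_k^{\theta}(q)$ and $p\in L$ with $|p|\le h$ has a realization from $x$ traversing $f$.

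\begin{enumerate}[leftmargin=*]
\item The outer loop of Algorithm~\ref{alg:pcgr} visits $x$, because line~1 computes exactly $S_k^{\theta}(q)$.
\item The middle loop visits $p$, because it ranges over all $p\in L$ with $|p|\le h$.
\item The inner loop visits $f$, because it ranges over every edge traversed by some realization of $p$ from $x$ in $\Cl_{\T}(E)$. That realization is exactly what witnesses $(x,\cdot)\in\Cl_{\T}(E)_p$ in the definition of $\Reach_{L,h}$.
\end{enumerate}

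\textbf{Certificate step.} Since $f\in\Cl_{\T}(E)$, the finite stabilization of Theorem~\ref{thm:closure} gives a stage $n$ with $f\in T_{\T}^{n}(E)$. Unwinding the stages by induction on $n$ yields a finite derivation tree. Its leaves lie in $E$, and so resolve to admitted evidence via $\eta$. Each internal node is one rule application from $\T$.

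Hence a valid certificate $\delta_f$ exists. The hypothesis that the checker is complete for $\T$ means exactly that the construct-or-retrieve step produces such a certificate and the test accepts it. So line~7 succeeds and $f$ is inserted into $C$.

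Finally, nothing downstream removes elements of $C$: ranking and diffusion only reorder. Therefore $f\in\mathcal{R}^{\theta}_{\mathrm{cert}}(q)$.

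\textbf{Set identity.} The forward implication gives the inclusion ``$\subseteq$'': a certified fact that is not retrieved cannot lie on any admissible path of length at most $h$ from $S_k^{\theta}(q)$.

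For ``$\supseteq$'', take $f\in\Cl_{\T}(E)$ lying on no such path. I will show $f\notin\mathcal{R}^{\theta}_{\mathrm{cert}}(q)$. The only insertion into $C$ occurs inside the triple loop, at an edge $e$ traversed by a realization of some $p\in L$, $|p|\le h$, from some seed in $S_k^{\theta}(q)$. So every element of $\mathcal{R}^{\theta}_{\mathrm{cert}}(q)$ lies on such a path, which $f$ does not.

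I will note explicitly that $\mathcal{R}^{\theta}_{\mathrm{cert}}(q)$ is exactly the set $C$ returned, as edges in the sense of Remark~\ref{rem:paths-vs-edges}. This identification makes both directions precise.

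\textbf{Main obstacle.} The delicate point is the certificate step. It requires completeness of the checker to mean ``every $f\in\Cl_{\T}(E)$ receives a certificate that verifies'', not merely ``every valid certificate is accepted''. I would state this reading explicitly, justified by the existence of derivation trees from Theorem~\ref{thm:closure}.

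A secondary point is that ``realization of $p$ in $\Cl_{\T}(E)$'' in the algorithm must match the theorem's notion of a path traversing $f$. This is immediate once both are read as sequences of closure edges whose labels spell $p$.
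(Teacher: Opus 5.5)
Your proof is correct and follows the paper's own argument: you trace the nested enumeration of Algorithm~\ref{alg:pcgr} to reach $f$, invoke checker completeness on the derivation supplied by Theorem~\ref{thm:closure}, and obtain the identity by contraposition. Your two refinements are worth keeping: reading ``complete checker'' as covering the construct-or-retrieve step makes explicit what the paper leaves implicit, and for the $\supseteq$ direction you correctly argue from the algorithm's single insertion point inside the triple loop, whereas the paper's appeal to non-interference ($\mathcal{R}^{\theta}_{\mathrm{cert}}(q)\subseteq\Cl_{\T}(E)$) does not by itself supply what that direction needs.
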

\begin{proof}
Since $x\in S_k(q)$ and $p\in L$ with $|p|\le h$, the nested enumeration of Algorithm~\ref{alg:pcgr} reaches the edge $f$ while expanding $p$ from $x$. A checker complete for $\T$ accepts the derivation of $f\in\Cl_{\T}(E)$ guaranteed by Theorem~\ref{thm:closure}, so the verification test succeeds and $f$ is added to $C$. The displayed identity is the contrapositive, together with the observation that by Theorem~\ref{thm:noninterference} nothing outside $\Cl_{\T}(E)$ is ever added.
\end{proof}

\begin{corollary}[Failure modes are attributable]
\label{cor:attributable}
A certified fact absent from $\mathcal{R}^{\theta}_{\mathrm{cert}}(q)$ fails for one or more of three attributable reasons: (i) \emph{seed miss}, no endpoint of a route to $f$ lies in $S_k^{\theta}(q)$; (ii) \emph{horizon miss}, every route exceeds $h$; (iii) \emph{grammar miss}, every route leaves $L$.
\end{corollary}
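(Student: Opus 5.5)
The plan is to derive Corollary~\ref{cor:attributable} directly from the set identity in Theorem~\ref{thm:completeness}, by unpacking the negation of the membership condition on its right-hand side. Fix $q$, $k$, $h$, $L$ and a configuration $\theta$ whose checker is complete for $\T$; this is the standing hypothesis under which the identity was proved, and I will state the corollary under it. Suppose $f\in\Cl_{\T}(E)\setminus\mathcal{R}^{\theta}_{\mathrm{cert}}(q)$. By Theorem~\ref{thm:completeness}, $f$ lies on no $L$-admissible path of length at most $h$ starting from $S_k^{\theta}(q)$.

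To make the trichotomy precise, I will call a \emph{route to $f$} any pair $(x,p)$ where $x\in P$, $p$ is a schema path, and some realization of $p$ from $x$ in $\Cl_{\T}(E)$ traverses the edge $f$. Here a realization is a sequence of certified edges, as in Stage~II. The negated condition then says that no route $(x,p)$ satisfies all three conditions $x\in S_k^{\theta}(q)$, $|p|\le h$ and $p\in L$ simultaneously.

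There are two cases. If $f$ has no route at all, then (i) holds vacuously in the reading "no endpoint of a route lies in $S_k^{\theta}(q)$". In fact the route $(x',a)$ always exists, since $f$ itself realizes the length-one path $a$ from $x'$, so this case is degenerate anyway. Otherwise, every route fails at least one of the three conditions. I then split on the disjunction. If no route starts in the seed set, that is (i). If some route does start in $S_k^{\theta}(q)$, then each such seeded route violates $|p|\le h$ or $p\notin L$, which corresponds to (ii) or (iii).

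The main obstacle is that the corollary as worded quantifies "every route exceeds $h$" and "every route leaves $L$" uniformly. The negated conjunction only gives, route by route, that each route fails some condition; the failing condition may differ between routes. I would resolve this by reading the corollary's "one or more of" as a per-route attribution, with (ii) and (iii) quantified over seeded routes. Concretely: every seeded route exceeds $h$ or leaves $L$, so each missing fact is covered by the union of the three reasons. If the uniform reading is required, I would note that it fails in general and offer the per-route version as the statement actually supported by Theorem~\ref{thm:completeness}. Both readings share the essential point: none of the three reasons involves $\rho$, $\F$, $\omega$ or the LLM, so misses are attributable only to $\Phi$ (through the seeds), $h$, or $L$.
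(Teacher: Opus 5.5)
Your proposal is correct and matches the paper's approach. The paper gives the corollary no separate proof and treats it as a direct unpacking of the displayed identity in Theorem~\ref{thm:completeness}, under that theorem's complete-checker hypothesis, which you rightly carry over. Your quantifier caveat is also accurate: for instance, a fact reached from a seed both by a path in $L$ of length $h+1$ and by a path of length $\le h$ outside $L$ is missed, yet satisfies none of (i)--(iii) in the uniform reading, so the per-route attribution you give is exactly what the theorem supports.
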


Corollary~\ref{cor:attributable} is the practical payoff. Each failure mode has a distinct remedy---improve $\Phi$ or raise $k$; raise $h$; widen $L$---and each is separately measurable by re-running retrieval with that single parameter relaxed. Theorem~\ref{thm:index-exact} shows that admissible-word feasibility can be decided on the bisimulation index alone, which rules out horizon and grammar explanations at the level of the path language; attributing a particular fact's absence still requires the certified graph (Remark~\ref{rem:index-caveat}). Reporting the three-way decomposition of recall loss is more informative than a single Recall@$k$ number, and we recommend it as the primary diagnostic for the semantic-recall axis of Section~\ref{sec:evaluation}.

Together, Theorems~\ref{thm:noninterference} and \ref{thm:completeness} express the intended division of labor precisely: the learned configuration $\theta$ controls the second containment and has no influence whatsoever on the first.

\section{Minimal Structures for Certified Retrieval}
\label{sec:minimal}

Two natural minimality questions arise for a PAGR system, and both bear directly on the retrieval operator rather than on the framework's presentation. Given the certified graph, what is the smallest structure that answers the same Stage~II expansions? And given the base set $E$, which facts are load-bearing? We answer both, and in each case the useful result is accompanied by a caveat that a naive statement would get wrong.

\subsection{Bounded bisimulation as a minimal Stage-II index}
\label{sec:bisim}

Let $G$ denote the certified graph (distinct from the base-change group $G_{\bm d}$ of Section~\ref{sec:moduli}): vertex set $P$, and for each $a\in Q_1$ an edge relation $\mathcal{E}_a=\Cl_{\T}(E)_a$. Write $x\xrightarrow{a}y$ for $(x,y)\in\mathcal{E}_a$ and $x\xrightarrow{w}y$ for a path spelling $w\in Q_1^{\ast}$.

\begin{definition}[Bounded forward bisimulation]
\label{def:hbisim}
Let $x\approx_0 y$ iff $x$ and $y$ have the same sort. Inductively, $x\approx_{m+1}y$ iff $x\approx_m y$ and for every $a\in Q_1$,
\[
\bigl\{[z]_{\approx_m} : x\xrightarrow{a}z\bigr\}
=\bigl\{[z]_{\approx_m} : y\xrightarrow{a}z\bigr\}.
\]
We call $\approx_h$ the $h$-bisimulation and write $G_h=G/\!\approx_h$ for the quotient, in which $[x]\xrightarrow{a}[y]$ iff $x'\xrightarrow{a}y'$ for some $x'\in[x]$, $y'\in[y]$.
\end{definition}

This is the classical notion underlying structural indexes for graph-structured data \citep{kaushik2002}, and $\approx_h$ is computable by partition refinement in $O(h\,|\mathcal{E}|)$, or the full bisimulation in $O(|\mathcal{E}|\log|P|)$ \citep{paige1987}.

\begin{lemma}[Word preservation]
\label{lem:word}
If $x\approx_h y$ then for every $w\in Q_1^{\ast}$ with $|w|\le h$, $x$ has an outgoing $w$-path iff $y$ does.
\end{lemma}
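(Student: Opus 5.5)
We prove a slightly stronger statement by induction on $m$: for every $m\le h$, if $x\approx_m y$ then for every word $w$ with $|w|\le m$, $x$ has an outgoing $w$-path iff $y$ does. The lemma is the case $m=h$. Strengthening the hypothesis to an arbitrary level $m$, rather than fixing $h$, is what lets the induction pass to the successor of a vertex, which is only known to be related at one level lower. We use the convention of Definition~\ref{def:hbisim} that a path spelling $w=a_n\cdots a_1$ traverses $a_1$ first. This fixes how we decompose $w$: the first letter is peeled off at the source vertex, and the remaining suffix is handed to the successor.

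\textbf{Base case} $m=0$. The only word of length $\le 0$ is the empty word. Every vertex has the trivial empty path, so the equivalence holds vacuously. Sort agreement from $\approx_0$ is not even needed here.

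\textbf{Inductive step.} Assume the claim at level $m$, and let $x\approx_{m+1}y$ and $|w|\le m+1$. If $w$ is empty we are done, so write $w=w'a$ with $a\in Q_1$ the first arrow traversed and $|w'|\le m$. Suppose $x\xrightarrow{a}z\xrightarrow{w'}z''$. By the defining clause of $\approx_{m+1}$, the class $[z]_{\approx_m}$ lies in the set of $\approx_m$-classes of $a$-successors of $x$, and this set equals the corresponding set for $y$. Hence there is some $z'$ with $y\xrightarrow{a}z'$ and $z'\approx_m z$. The induction hypothesis applied to the pair $z\approx_m z'$ and the word $w'$, with $|w'|\le m$, gives $z'$ an outgoing $w'$-path. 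Concatenating, $y$ has an outgoing $w$-path. The converse direction is symmetric, since $\approx_{m+1}$ is symmetric.

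\textbf{Main obstacle.} The delicate point is bookkeeping, not mathematics. We must check three things:
\begin{itemize}
\item the set equality in Definition~\ref{def:hbisim} is used at level $m$, matching the induction hypothesis at level $m$;
\item the refinement chain $\approx_{m+1}\subseteq\approx_m$, which is built into the definition, is what allows the same argument to cover words strictly shorter than $m+1$;
\item the word is decomposed at its \emph{first} traversed letter, so that the bisimulation clause, which speaks about immediate successors, applies.
\end{itemize}
Decomposing at the last letter instead would require a backward bisimulation, and the induction would fail.
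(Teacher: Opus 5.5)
Your proof is correct and is essentially the paper's argument: peel off the first traversed letter, use the successor-class clause of $\approx_{m+1}$ to find a matching $a$-successor of $y$ one level lower, and apply induction to the remaining suffix. Your explicit strengthening to an arbitrary level $m$ makes precise what the paper's ``induction on $|w|$'' tacitly needs, since its inductive call is made at level $h-1$ rather than $h$. (The paper writes the word as $w=aw'$ with $a$ traversed first, and your second bookkeeping bullet is superfluous: $|w'|\le m$ whenever $|w|\le m+1$, so the refinement $\approx_{m+1}\subseteq\approx_m$ is never used.)
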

\begin{proof}
Induction on $|w|$; the case $|w|=0$ is immediate. Let $w=aw'$ with $|w|\le h$ and suppose $x\xrightarrow{a}z$ with $z$ having an outgoing $w'$-path. Since $x\approx_h y$ and $h\ge1$, the class $[z]_{\approx_{h-1}}$ occurs among the $a$-successor classes of $y$, so there is $z'$ with $y\xrightarrow{a}z'$ and $z'\approx_{h-1}z$. As $|w'|\le h-1$, the induction hypothesis gives $z'$ an outgoing $w'$-path.
\end{proof}

\begin{theorem}[Exact word realizability on the index]
\label{thm:index-exact}
For every $x\in P$ and every $w$ with $|w|\le h$: the class $[x]$ has an outgoing $w$-path in $G_h$ if and only if $x$ has an outgoing $w$-path in $G$. Consequently, for any regular $L$ and any $h'\le h$, membership of a seed in the set
\[
\mathrm{Dom}_{L,h'}=\{x\in P:\exists\, p\in L,\ |p|\le h',\ x\text{ has an outgoing }p\text{-path}\}
\]
is decided exactly on $G_h$.
\end{theorem}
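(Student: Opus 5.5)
The plan is to prove the two directions of the equivalence separately and then read off the consequence for $\mathrm{Dom}_{L,h'}$.

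\textbf{Direction $G\Rightarrow G_h$.} This direction is immediate from the definition of the quotient. If $x=x_0\xrightarrow{a_1}x_1\to\cdots\xrightarrow{a_n}x_n$ realizes $w$ in $G$, then each edge $x_{m-1}\xrightarrow{a_m}x_m$ gives $[x_{m-1}]\xrightarrow{a_m}[x_m]$ in $G_h$ by Definition~\ref{def:hbisim}. So the class path $[x_0],\dots,[x_n]$ spells $w$ in $G_h$. No bound on $|w|$ is needed here.

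\textbf{Direction $G_h\Rightarrow G$.} This is the substantive direction, and it is where the obstacle lies. A quotient edge $[u]\xrightarrow{a}[v]$ only guarantees some $u'\approx_h u$ and $v'\approx_h v$ with $u'\xrightarrow{a}v'$. Consecutive witnesses therefore need not agree: $v'$ from one step and $u''$ from the next are only $\approx_h$-equivalent, not equal. I would fix this with a stronger induction hypothesis that lets the required depth decrease along the path.

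Claim: for every $m\le h$ and every word $w$ with $|w|\le m$, if the class $[x]_{\approx_h}$ has an outgoing $w$-path in $G_h$, then every $y\approx_m x$ has an outgoing $w$-path in $G$.

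The proof is by induction on $|w|$. For $w=aw'$, the first quotient edge gives $x'\approx_h x$ and $z'$ with $x'\xrightarrow{a}z'$, and $[z']$ has an outgoing $w'$-path in $G_h$. Take any $y\approx_m x$. Since $\approx_h$ refines $\approx_m$, we get $y\approx_m x'$. With $m\ge1$, the successor-class condition at level $m-1$ yields $z$ with $y\xrightarrow{a}z$ and $z\approx_{m-1}z'$. Because $|w'|\le m-1$, the induction hypothesis applied to $z'$, with the depth lowered to $m-1$, gives $z$ an outgoing $w'$-path. Hence $y$ has an outgoing $aw'$-path.

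Taking $y=x$ and $m=|w|\le h$ closes this direction. This is essentially Lemma~\ref{lem:word} strengthened to chain through quotient edges, and the only thing to check carefully is that the decreasing-depth bookkeeping suffices.

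\textbf{Consequence for $\mathrm{Dom}_{L,h'}$.} Membership $x\in\mathrm{Dom}_{L,h'}$ is a disjunction over the finitely many $p\in L$ with $|p|\le h'\le h$ of "$x$ has an outgoing $p$-path." By the equivalence just proved, each disjunct can be evaluated on $[x]$ in $G_h$. So $\mathrm{Dom}_{L,h'}$ is a union of $\approx_h$-classes and is decided exactly on $G_h$, for example by running the product of $G_h$ with an automaton for $L$ up to depth $h'$.
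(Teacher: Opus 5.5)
Your proof is correct, and it takes a somewhat different route from the paper's.

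\textbf{The paper's route.} For each quotient edge $C_{k-1}\xrightarrow{a_k}C_k$ the paper fixes a witness edge $\tilde y_{k-1}\xrightarrow{a_k}y_k$ in $G$. It stitches these together by applying Lemma~\ref{lem:word} at full depth $h$ inside each class, using $y_k\approx_h\tilde y_k$. It then transfers the resulting path from the witness $y_0$ to $x$ with one more application of the lemma. This is modular: the bisimulation transfer is a standalone black box, and everything stays at level $h$ because every witness lies in an $\approx_h$-class.

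\textbf{Your route.} You fold the lemma into the theorem. You let the depth drop from $m$ to $m-1$ at each edge and quantify over all $y\approx_m x$, so the path is built directly from $x$ and Lemma~\ref{lem:word} is never needed as a separate ingredient. As a by-product you get the slightly more uniform statement that every $y\approx_{|w|}x$, not only $x$, realizes $w$ in $G$.

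\textbf{What your route adds.} Your induction is fully explicit. The paper's stitching step applies Lemma~\ref{lem:word} to $\tilde y_1$ after exhibiting only the single edge $\tilde y_1\xrightarrow{a_2}y_2$. It therefore tacitly needs the suffix $a_3\cdots a_n$ to be realized from $y_2$ already, that is, a backward induction along the path. Your decreasing-depth hypothesis supplies this outright.

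Your reduction of $\mathrm{Dom}_{L,h'}$ to a finite disjunction over words of length at most $h'\le h$ is also fine.
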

\begin{proof}
($\Leftarrow$) The quotient map $\pi: G\to G_h$ is a graph homomorphism, so any $w$-path from $x$ in $G$ projects to a $w$-path from $[x]$ in $G_h$.

($\Rightarrow$) Let $[x]=C_0\xrightarrow{a_1}C_1\xrightarrow{a_2}\cdots\xrightarrow{a_n}C_n$ be a $w$-path in $G_h$ with $n=|w|\le h$. For each $k=1,\dots,n$, the definition of quotient edges supplies $\tilde{y}_{k-1}\in C_{k-1}$ and $y_k\in C_k$ with $\tilde{y}_{k-1}\xrightarrow{a_k}y_k$ in $G$. Set $y_0=\tilde{y}_0\in C_0$.

We claim $y_0$ has an outgoing $w$-path in $G$, which we construct one step at a time. We have $y_0\xrightarrow{a_1}y_1$. Now $y_1\in C_1$ and $\tilde{y}_1\in C_1$, so $y_1\approx_h\tilde{y}_1$. Since $\tilde{y}_1\xrightarrow{a_2}y_2$ and $|a_2\cdots a_n|=n-1\le h-1<h$, Lemma~\ref{lem:word} gives $y_1$ an outgoing $(a_2\cdots a_n)$-path in $G$. Concatenating with $y_0\xrightarrow{a_1}y_1$ yields a $w$-path from $y_0$.

Since $y_0\in C_0=[x]_{\approx_h}$ we have $x\approx_h y_0$, and $|w|=n\le h$, so Lemma~\ref{lem:word} transfers the $w$-path to $x$.
\end{proof}

The corresponding statement about \emph{endpoints} is false, and the failure is instructive.

\begin{remark}[The index does not preserve reached endpoints]
\label{rem:index-caveat}
It is tempting to conclude that $\Reach_{L,h}(S)$ can be computed on $G_h$ and read back. It cannot. Unfolding $[x]\xrightarrow{a}C_1\xrightarrow{a'}C_2$ yields a successor $z$ of $x$ with $z\approx_{h-1}y_1$ for some $y_1\in C_1$, but $\approx_{h-1}$ does not imply $\approx_h$, so $z$ need not lie in $C_1$ and the continuation need not land in $C_2$. Each unfolding step loses one level of the bisimulation, so only word realizability---which Lemma~\ref{lem:word} shows is preserved---survives to depth $h$. What remains true is one-sided: since the quotient map is a homomorphism,
\[
\Reach_{L,h}(S)\ \subseteq\ \pi^{-1}\!\left(\Reach^{G_h}_{L,h}(\pi(S))\right),
\]
so index expansion is a \emph{sound over-approximation}. It never discards a certified fact, and a validation pass against $G$ removes the false positives. This is the standard index-then-validate discipline \citep{kaushik2002}, and it is the only correct way to use $G_h$ in Stage~II.
\end{remark}

\begin{proposition}[Minimality of the index]
\label{prop:index-min}
$\approx_h$ is the coarsest equivalence refining the sort partition that is stable under $h$ rounds of the refinement of Definition~\ref{def:hbisim}. Hence $G_h$ has the fewest vertices among quotients through which the depth-$h$ unfolding of $G$ factors.
\end{proposition}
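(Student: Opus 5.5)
The plan is to read the proposition as a statement about a greatest element in the lattice of equivalences on $P$. Call an equivalence $\sim$ \emph{$h$-stable} if it refines the sort partition and is contained in the $h$-fold refinement it generates. Concretely, let $\mathrm{Ref}(\sim)$ relate $x,y$ iff $x\sim y$ and, for every $a\in Q_1$, $x$ and $y$ have the same set of $a$-successor $\sim$-classes. Then $\approx_0$ is the sort partition and $\approx_{m+1}=\mathrm{Ref}(\approx_m)$, so $\approx_h=\mathrm{Ref}^h(\approx_0)$.

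The first step is to record that $\mathrm{Ref}$ is monotone with respect to inclusion of relations. If $\sim\subseteq\sim'$, each $\sim$-class lies inside a $\sim'$-class. Equal sets of successor $\sim$-classes therefore project to equal sets of successor $\sim'$-classes, and $\mathrm{Ref}(\sim)\subseteq\mathrm{Ref}(\sim')$ follows.

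The second step is the coarsest-ness claim. Suppose $\sim$ is any equivalence obtained from some equivalence $\sim_0$ refining the sort partition by $h$ rounds of refinement, i.e.\ $\sim=\mathrm{Ref}^h(\sim_0)$ with $\sim_0\subseteq\approx_0$. Induction on the number of rounds, using monotonicity, gives $\mathrm{Ref}^m(\sim_0)\subseteq\mathrm{Ref}^m(\approx_0)=\approx_m$ for every $m$. In particular $\sim\subseteq\approx_h$. Since $\approx_h$ is itself of this form, it is the greatest such equivalence, hence the coarsest.

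The third step turns coarsest into fewest vertices. An equivalence through whose quotient the depth-$h$ unfolding factors must identify only vertices with identical depth-$h$ unfoldings, i.e.\ identical trees of labelled successor classes down to depth $h$. Identical depth-$h$ unfoldings are exactly $\approx_h$-equivalence: both directions follow by induction on $h$ from Definition~\ref{def:hbisim}. So every such equivalence is contained in $\approx_h$. A finer equivalence has at least as many classes, so $|G_h|$ is minimal.

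The main obstacle is that the statement leaves ``stable under $h$ rounds'' and ``the unfolding factors through'' informal. I would therefore fix the definitions above explicitly, and in particular formalize ``factors through'' as the requirement that the equivalence is contained in equality of depth-$h$ unfoldings. With those definitions fixed, the argument reduces to the monotonicity induction. I would also note that this minimality is about vertex count and word realizability (Theorem~\ref{thm:index-exact}), not endpoint preservation, consistently with Remark~\ref{rem:index-caveat}.
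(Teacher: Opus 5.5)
Your argument is the paper's own, made explicit. The paper only observes that each round of Definition~\ref{def:hbisim} is the coarsest refinement satisfying its stability condition. Your monotonicity of $\mathrm{Ref}$ is exactly what turns that per-round statement into coarsest-ness after $h$ rounds, and your kernel argument fills in the paper's unexplained ``hence''.

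One thing must be repaired, though. The definition in your first paragraph, ``$\sim$ is contained in the $h$-fold refinement it generates'', says $\sim\subseteq\mathrm{Ref}^h(\sim)$. Because $\mathrm{Ref}$ is deflationary ($\mathrm{Ref}(\sim)\subseteq\sim$), for $h\ge1$ this forces $\sim=\mathrm{Ref}(\sim)$, so $\sim$ must be a full forward bisimulation. The coarsest such equivalence is then the stable limit $\approx_\infty=\bigcap_m\approx_m$, not $\approx_h$. Moreover, $\approx_h$ itself fails the condition whenever refinement has not stabilized by round $h$. For the one-sort path $v_0\xrightarrow{a}v_1\xrightarrow{a}v_2$ with $h=1$, one gets $\approx_1=\{\{v_0,v_1\},\{v_2\}\}$, but $\mathrm{Ref}(\approx_1)=\approx_2$ is discrete. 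Your second step silently switches to a different and correct notion: $\sim=\mathrm{Ref}^h(\sim_0)$ with $\sim_0\subseteq\approx_0$. The same induction also covers any chain with $\sim_0\subseteq\approx_0$ and $\sim_{m+1}\subseteq\mathrm{Ref}(\sim_m)$. That is the definition you should state.

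In the third step, ``identical depth-$h$ unfoldings $\Leftrightarrow$ $\approx_h$'' holds only for the set-based unfolding, where children are recorded as a set of labelled subtrees. Your phrase ``trees of labelled successor classes'' suggests this reading, but you should say so explicitly. For unfolding trees up to isomorphism the equivalence is false. Take a vertex with one $a$-successor and a vertex of the same sort with two $a$-successors, all three successors being sinks of a common sort. These are $\approx_h$-equivalent for every $h$ but have non-isomorphic depth-$1$ unfoldings, so that unfolding does not even factor through $G_h$.

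Finally, formalizing ``factors through'' as kernel containment makes $G_h$ qualify trivially. You might instead want the stronger reading, in which the quotient graph itself reproduces the unfolding. Then you also need that $x$ in $G$ and $[x]$ in $G_h$ are $h$-bisimilar. This follows by checking the forth and back conditions for $Z_m=\{(x,C): x\approx_m c\text{ for some }c\in C\}$, which is the same fact underlying Theorem~\ref{thm:index-exact}.
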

\begin{proof}
Immediate from the construction: $\approx_h$ is obtained by $h$ rounds of coarsest-stable partition refinement started from the sort partition, and each round produces the coarsest refinement satisfying its stability condition.
\end{proof}

We state minimality relative to bisimulation rather than to word equivalence deliberately. Trace equivalence---identifying nodes with the same realizable word sets---is strictly coarser on nondeterministic graphs, and knowledge graphs are nondeterministic in the relevant sense, since a node may have many $a$-successors. But deciding trace equivalence is PSPACE-complete, whereas $\approx_h$ is near-linear. The bisimulation quotient is the largest compression that remains cheap, which is why it is the one used in practice.

The concrete payoff is narrower than it may first appear, and Remark~\ref{rem:index-caveat} bounds it. Theorem~\ref{thm:index-exact} decides admissible-word \emph{feasibility} exactly, so it can rule out candidate horizon and grammar explanations at the path-language level---if no admissible word of length $\le h$ leaves a seed, nothing is reachable from that seed and it can be discarded without expanding $G$. Attributing the absence of a \emph{particular} certified fact $f$ in the sense of Corollary~\ref{cor:attributable} is a different question, since it asks whether some realization of an admissible word traverses $f$, and that requires validation against $G$: the index preserves neither reached endpoints nor individual traversed edges.

\subsection{Minimal bases and the value of redundancy}
\label{sec:basemin}

The second question concerns $E$ itself. Some admitted base facts may be derivable from the others.

\begin{definition}[Redundant base fact]
$f\in E$ is \emph{redundant} if $f\in\Cl_{\T}(E\setminus\{f\})$.
\end{definition}

\begin{proposition}[Provenance decides redundancy]
\label{prop:prov-redundant}
Let $\Prov(f)\in\mathbb{N}[X]$ be the provenance of $f$ computed over $E$. Then $f$ is redundant if and only if the Boolean evaluation of $\Prov(f)$ under $x_f\mapsto 0$ and $x_e\mapsto 1$ for all $e\neq f$ is $1$.
\end{proposition}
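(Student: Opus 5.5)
The plan is to connect the Boolean evaluation of $\Prov(f)$ to derivations, and derivations to closure membership. Throughout, $\Prov(f)$ is read as the sum over the admitted finite derivations of $f$ from $E$, including the trivial one-node derivation whose single leaf is $f$ itself, which contributes the monomial $x_f$.

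The key observation is that the map $\mathbb{N}[X]\to\mathbb{B}$ sending $x_f\mapsto 0$ and every other $x_e\mapsto 1$ is a semiring homomorphism. It sends a monomial $\prod_{e\in\mathrm{leaves}(\delta)}x_e$ to $1$ exactly when $f\notin\mathrm{leaves}(\delta)$, and it sends a sum to $1$ exactly when some summand goes to $1$. So the Boolean value is $1$ if and only if there is a derivation $\delta\in D(f)$ none of whose leaves is $f$. The argument then reduces to two claims.

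\textbf{(a)} Suppose such a derivation exists. Its leaves lie in $E\setminus\{f\}$ and its internal nodes instantiate rules of $\T$. Inducting on the height of $\delta$, every node's fact lies in $\Cl_{\T}(E\setminus\{f\})$: leaves lie there because the closure contains its base, and an internal node lies there because the closure is a fixed point of $T_{\T}$ (Theorem~\ref{thm:closure}). Hence $f\in\Cl_{\T}(E\setminus\{f\})$.

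\textbf{(b)} Conversely, suppose $f\in\Cl_{\T}(E\setminus\{f\})$. Unwinding the stages $T_{\T}^n(E\setminus\{f\})$, each fact entering at stage $n+1$ is obtained by one rule application from facts present at stage $n$. By induction on the stage this yields a finite derivation tree of $f$ with all leaves in $E\setminus\{f\}$. That tree is also a derivation over $E$, so it belongs to $D(f)$, and its monomial avoids $x_f$.

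The main obstacle is that $\Prov(f)$ is well defined as an element of $\mathbb{N}[X]$ only when $D(f)$ is finite. For recursive $\T$, $D(f)$ can be infinite, as the paper noted earlier. I would handle this in the first step above by restricting $D(f)$ to derivations in which no fact repeats along a root-to-leaf path, so that $D(f)$ is finite. Any derivation can be pruned to such a tree by replacing a subtree rooted at a repeated fact with the deeper subtree for that same fact. This pruning only removes leaves, so it preserves the property that no leaf is $f$. Therefore the Boolean evaluation is unchanged, and both (a) and (b) go through with the restricted sum.
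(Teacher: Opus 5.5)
Your proposal is correct and follows the same route as the paper's proof: the evaluation $x_f\mapsto 0$, $x_e\mapsto 1$ kills exactly the monomials of derivations that use $f$ as a leaf, so a surviving monomial is a derivation of $f$ from $E\setminus\{f\}$, and conversely. You go further than the paper in two ways: you prove the derivation--closure correspondence explicitly in (a) and (b), and you make $\Prov(f)$ a genuine element of $\mathbb{N}[X]$ for recursive $\T$ by pruning to repetition-free derivations, which keeps both directions of the equivalence intact (a depth-$h$ truncation, as the paper mentions for bounded retrieval, would not).
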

\begin{proof}
The monomials of $\Prov(f)$ enumerate the admitted derivations of $f$, one of which is the trivial derivation contributing $x_f$. Setting $x_f\mapsto0$ annihilates exactly the monomials whose derivations use $f$ as a leaf. A surviving monomial is therefore a derivation of $f$ from $E\setminus\{f\}$, and conversely.
\end{proof}

So the redundancy test requires no additional machinery: it is a semiring evaluation of an object the system already maintains for certification.

\begin{definition}[Base core]
$E'\subseteq E$ is a \emph{base core} if $\Cl_{\T}(E')=\Cl_{\T}(E)$ and no proper subset of $E'$ has this property.
\end{definition}

Greedily testing facts by Proposition~\ref{prop:prov-redundant} and deleting redundant ones yields a base core. Two limits should be recorded so that this is not oversold.

\begin{proposition}[Minimum bases are intractable]
\label{prop:min-np}
Computing a base core of minimum cardinality is NP-hard.
\end{proposition}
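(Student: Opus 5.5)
We reduce from a classical NP-hard covering problem. The natural choice is \textsc{Set Cover}, or equivalently the decision problem ``is there a base core of size at most $K$?''. The reduction must produce a schema $Q$, a finite Horn theory $\T$, and a base set $E$ so that small generating subsets of $E$ correspond exactly to small set covers. The fixed-point semantics of Theorem~\ref{thm:closure} makes $\Cl_{\T}(\cdot)$ monotone and computable in polynomial time for fixed rule arity. Hence the decision version lies in NP: guess $E'$ and verify $\Cl_{\T}(E')=\Cl_{\T}(E)$. The content of the proof is therefore hardness.

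\textbf{Construction.} Given a universe $U=\{u_1,\dots,u_n\}$ and sets $S_1,\dots,S_m\subseteq U$, take one sort and, for simplicity, unary-style facts encoded as binary relations with a fixed anchor entity $o$. We introduce a relation $\mathsf{pick}$ with base facts $\mathsf{pick}(o,S_j)$ for each $j$. We introduce a relation $\mathsf{mem}$ with base facts $\mathsf{mem}(u_i,o)$, one for each element, so that each $\mathsf{mem}$ fact is cheap to retain. A Horn rule should let $\mathsf{pick}(o,S_j)$ derive $\mathsf{mem}(u_i,o)$ whenever $u_i\in S_j$. Positive Horn rules over relational composition can express this through a static incidence relation $\mathsf{in}(S_j,u_i)$, using the composite rule $\mathsf{pick};\mathsf{in}\subseteq\mathsf{mem}^{\mathrm{op}}$, or an orientation-adjusted version.

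The incidence facts must themselves be in $E$, since they are not derivable. Every base core must therefore contain them, and they contribute a fixed additive constant. Each $\mathsf{pick}(o,S_j)$ is underivable, so it too must lie in every core. This breaks the correspondence, and the reduction has to be repaired.

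\textbf{Repair.} The fix is to make the set-choices derivable and the element-facts costly. One option uses a rule $\mathsf{mem}$-facts $\Rightarrow$ $\mathsf{pick}$ so that picks become redundant. A cleaner option is to encode \textsc{Set Cover} via the \emph{hitting} direction. We add one base fact $g_j$ per set, and rules $g_j\Rightarrow h_i$ for each $u_i\in S_j$, together with $h_i\Rightarrow g_j$ whenever possible. Alternatively, we reduce from \textsc{Minimum Equivalent Digraph} / \textsc{Minimum Generating Set} for reachability closures, which is NP-hard in the cyclic case. Here $\T$ is a transitivity rule $R;R\subseteq R$, $E$ is the edge set of a digraph, and a base core of minimum size is exactly a minimum equivalent subgraph. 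That problem is NP-hard via Hamiltonian cycle: a strongly connected digraph on $N$ vertices has a minimum equivalent subgraph with $N$ edges iff it is Hamiltonian.

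This last reduction is the one I would commit to. It uses a single sort, a single arrow $a$ with $s(a)=t(a)$, and the single positive Horn rule $R_a;R_a\subseteq R_a$. Under these choices $\Cl_{\T}(E)$ is the transitive closure of $E$.

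\textbf{Main steps and obstacle.} First, check that $\Cl_{\T}(E')=\Cl_{\T}(E)$ iff $E'$ has the same transitive closure. This follows by induction on the stages of Theorem~\ref{thm:closure}. Second, show that a strongly connected digraph has a closure-equivalent subset of size $N$ iff it contains a Hamiltonian cycle. Any strongly connected spanning subgraph needs at least $N$ edges, and one with exactly $N$ edges in which every vertex has in- and out-degree $\ge1$ is a union of disjoint cycles; strong connectivity forces it to be a single cycle. Third, conclude that minimum base cores are NP-hard.

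The main obstacle is the subtlety that self-loops $(x,x)$ appear in the closure when cycles exist. Both sides of the equivalence contain them, so the correspondence is unaffected. It must still be verified carefully, together with the claim that a transitive rule is a legitimate ``positive Horn inclusion'' in the paper's sense, which it is by the form $R_{a_1};\cdots;R_{a_m}\subseteq R_b$.
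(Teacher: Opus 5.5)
Your committed argument is correct and is the paper's proof. You take $\T$ to be the single transitivity rule $R;R\subseteq R$ on one sort, so that $\Cl_{\T}(E)$ is the transitive closure and a minimum-cardinality base core is exactly a minimum equivalent digraph of $E$. The only difference is that you prove hardness of the minimum equivalent digraph problem inline via Hamiltonian cycle, where the paper simply cites Garey--Johnson; your degree-counting argument on strongly connected instances and your self-loop check are both sound. The abandoned Set Cover attempt and the ``repair'' paragraph play no role and can be dropped.
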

\begin{proof}
Take $\T$ to consist of the single transitivity rule $R;R\subseteq R$, so that $\Cl_{\T}(E)$ is the transitive closure of the digraph $E$. A minimum-cardinality $E'\subseteq E$ with the same transitive closure is exactly a minimum equivalent digraph of $E$, which is NP-hard \citep{garey1979}; see \citet{aho1972} for the relationship to transitive reduction.
\end{proof}

\begin{remark}[No canonical simplest knowledge base]
\label{rem:no-canonical}
Base cores are not unique, and unlike the core of a relational structure under homomorphism equivalence \citep{chandra1977} they need not be isomorphic to one another. More fundamentally, equivalence of Datalog programs is undecidable \citep{shmueli1993}, so there is no general procedure for deciding whether two PAGR systems carry the same inferential content. Minimality claims in this framework are therefore always relative to a fixed rule set and a fixed query class.
\end{remark}

Most importantly, minimizing $E$ is not epistemically free, and a system should usually decline to do it.

\begin{principle}[Redundancy in the base is trust redundancy]
\label{prin:redundancy}
Deleting a redundant $f$ from $E$ changes its status from $\mathsf{base}$ to $\mathsf{derived}$. Its certification then depends on rule soundness in addition to source correctness, lengthening its trust path, and by Section~\ref{sec:retraction} it becomes vulnerable to retraction of the facts that now derive it. Base minimization is therefore an analysis and compression tool, not a maintenance policy: a fact supported both by direct evidence and by derivation is more robust than one supported by either alone.
\end{principle}

This suggests reporting, rather than eliminating, redundancy. Define the \emph{evidential multiplicity} of a certified fact as the number of monomials in $\Prov(f)$ that survive deletion of any single base fact. Facts of multiplicity zero are single points of failure in the evidence graph; their distribution is a more informative robustness statistic than the size of $E$, and it is computable from provenance alone.

\section{Quiver Algebra Beyond Notation}

It is tempting either to overuse representation theory or to dismiss it as decorative. We identify four concrete roles for quiver algebra.

\subsection{Typed operator composition}
The first role is exact: $A_p$ is defined only for type-compatible paths. Ill-typed relation composition is impossible at the algebraic level.

\subsection{Constraint regularization}
\label{sec:constraint-reg}
Path identities yield testable operator constraints. For example, if $a$ is an inverse of $b$ in the schema, the equations
\[
ba=e_i,\qquad ab=e_j
\]
induce
\[
A_bA_a\approx I_{V_i},\qquad A_aA_b\approx I_{V_j}.
\]
If $c=ba$ is a declared compositional identity, enforce $A_c\approx A_bA_a$. Note that the inverse-pair equations are not admissible relations in the sense of Remark~\ref{rem:admissible}, since they equate a length-two path with a length-zero one; they are perfectly usable as operator constraints, but the representation-theoretic classification results should not be applied to the resulting quotient.

\subsection{Idempotent decomposition}
For an $\A_Q$-module $V$,
\[
V=\bigoplus_{i\in Q_0}e_iV.
\]
This is the algebraic origin of type-specific embedding spaces. It avoids forcing heterogeneous entities into one undifferentiated vector space.

\subsection{Representation type of schema quivers: a scope theorem}
\label{sec:reptype}

A framework that invokes quiver representations should say precisely which classical results are available to it. For PAGR the answer is sharply negative, and stating it is useful: it rules out an entire family of approaches that a reader might otherwise expect us to pursue, and it explains why the diagnostics of Section~\ref{sec:diagnostics} are restricted to invariants of a \emph{given} representation.

Recall the classification of representation type. Write $\bar Q$ for the underlying undirected multigraph of $Q$, obtained by forgetting arrow orientation.

\begin{theorem}[Gabriel; Kac; Drozd]
\label{thm:reptype}
Let $Q$ be a connected finite quiver and $k$ algebraically closed, and let $\Rep_k(Q)$ denote the category of finite-dimensional representations of $Q$. Say $Q$ has \emph{finite representation type} if $\Rep_k(Q)$ has finitely many indecomposables up to isomorphism, and \emph{tame} or \emph{wild} type in the usual sense for that category.
\begin{enumerate}[leftmargin=*,label=(\roman*)]
    \item \citep{gabriel1972} $Q$ has finite representation type if and only if $\bar Q$ is a simply-laced Dynkin diagram $A_n$, $D_n$, $E_6$, $E_7$, or $E_8$. In that case the indecomposables correspond bijectively to the positive roots of the associated root system.
    \item \citep{kac1980} $Q$ has tame type if and only if $\bar Q$ is an affine (Euclidean) Dynkin diagram $\tilde A_n$, $\tilde D_n$, $\tilde E_6$, $\tilde E_7$, or $\tilde E_8$. In all types, the dimension vectors of indecomposables are exactly the positive roots of the associated Kac--Moody root system.
    \item \citep{drozd1980} Otherwise $Q$ is wild: its representation classification problem contains that of every finite-dimensional $k$-algebra.
\end{enumerate}
\end{theorem}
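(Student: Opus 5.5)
The plan is to handle the three parts in the order finite, tame, wild. All of them rest on the Tits form of $Q$,
\[
q_Q(\bm d)=\sum_{i\in Q_0}d_i^2-\sum_{a\in Q_1}d_{s(a)}d_{t(a)},
\]
which depends only on $\bar Q$. The classical dichotomy is that $q_Q$ is positive definite exactly when $\bar Q$ is simply-laced Dynkin, and positive semidefinite but not definite exactly when $\bar Q$ is Euclidean. I will prove this first as a combinatorial lemma. One direction checks that each Dynkin and Euclidean diagram has the claimed form. For the other direction, any connected graph that is neither Dynkin nor Euclidean contains a Euclidean subgraph properly, and the null root $\delta$ of that subgraph, extended by a small perturbation, gives $q_Q<0$.

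For part~(i), I will follow Bernstein--Gelfand--Ponomarev. Reflection functors $S_i^{\pm}$ at sinks and sources act on dimension vectors by the simple reflections $s_i$. They send indecomposables to indecomposables, apart from the simple $S_i$. For Dynkin $\bar Q$, applying a Coxeter element repeatedly must eventually make the dimension vector non-positive, because the Weyl group is finite. Hence every indecomposable is obtained from a simple by reflections, and its dimension vector is a positive root. Injectivity of $M\mapsto\underline{\dim}M$ follows by the same argument, and surjectivity onto the positive roots by reversing it. For the converse, suppose $q_Q$ is not positive definite. Then some $\bm d$ with $q_Q(\bm d)\le0$ gives $\dim\Rep(Q,\bm d)\ge\dim G_{\bm d}-1$. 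Since the scalars act trivially, $G_{\bm d}$ has a one-parameter family of orbits, so there are infinitely many indecomposables. This is Tits' dimension-count argument.

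For part~(ii), I will use the same reflection-functor machinery on Euclidean quivers. Indecomposables split into preprojectives, preinjectives, and a regular part. The regular part is controlled by the defect $\langle\delta,-\rangle$ and consists of tubes parametrized by $\mathbb{P}^1$. This gives one-parameter families in each dimension $n\delta$, which is tameness. The statement that dimension vectors of indecomposables are exactly the positive roots, in all types, is Kac's theorem. Here I would not reprove Kac's full argument. It counts absolutely indecomposables over $\mathbb{F}_q$, shows the count is a polynomial independent of orientation, and compares it with the Weyl--Kac character formula. I would cite \citet{kac1980} for that step.

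For part~(iii), I will use Drozd's tame--wild dichotomy, cited from \citet{drozd1980}, to reduce the claim to showing the quiver is not tame. By the lemma, a non-Dynkin, non-Euclidean $\bar Q$ contains a vector with $q_Q(\bm d)<0$. Families of indecomposables then have at least two parameters, so $Q$ is not tame. To get wildness directly, I would embed $\Rep$ of the two-loop quiver, whose representations are pairs of matrices. That is the standard wild problem, and it contains every finite-dimensional algebra.

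I expect the main obstacle to be the uniform positive-roots statement in~(ii) and the wild embedding in~(iii). Neither is elementary. Since the theorem is stated as a recollection with attributions, the honest plan is to prove~(i) in full via BGP reflection functors and the Tits form, and to cite Kac and Drozd for the deep steps.
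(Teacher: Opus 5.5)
The paper proves nothing here; the theorem is quoted with attributions, so your plan has to stand on its own. Part~(i) does. Tits' dimension count is orientation-free, and a Dynkin $\bar Q$ is a tree, so BGP reflection functors are available.

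The genuine gap is that (ii) and (iii) silently assume $Q$ has no loops or oriented cycles. The statement allows them, and the paper uses it exactly there (Remark~\ref{rem:fd-scope}, Corollary~\ref{cor:wild-features}). It applies it to the loop $\tilde A_0$ and to the inverse pair $a:i\to j$, $b:j\to i$, which is the \emph{cyclic} orientation of $\tilde A_1$, not the Kronecker quiver.

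Reflection functors need a sink or a source. A looped vertex is neither, and an oriented cycle contains none. Reflecting at a sink or source never creates or destroys an oriented cycle, so you cannot pass to an acyclic orientation either. The picture you invoke is also false in these cases. The indecomposable projective $kQ$-modules are infinite dimensional, and $\Rep_k(Q)$ has no preprojective or preinjective indecomposables. The tubes are indexed by $k$ rather than by $\mathbb{P}^1$.

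The fix is elementary, but you must supply it:
\begin{enumerate}
\item The sum $z$ of the cycle composites at the vertices of the cycle is central in $kQ$.
\item Its Fitting decomposition splits every representation into a nilpotent part and a part on which every arrow is an isomorphism.
\item The nilpotent indecomposables are uniserial and determined by a start vertex and a length. There are finitely many per dimension vector, and they form one tube.
\item The invertible part is classified by the Jordan form of the cycle composite at one vertex, with eigenvalues in $k^\times$.
\end{enumerate}
That proves tameness of cyclic $\tilde A_n$ and of $\tilde A_0$.

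The same restriction affects (iii). Drozd's dichotomy is a theorem about finite-dimensional algebras. So for non-Dynkin, non-Euclidean quivers with oriented cycles, it does not turn ``not tame'' into ``wild''. Examples are a self-relation together with an inverse pair, or an inverse pair with one further sort attached. In those cases the embedding of the two-loop quiver that you only allude to must actually be constructed.

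Likewise, a loop gives the symmetrized Cartan matrix a diagonal entry $\le 0$. There is then no Kac--Moody root system in the usual sense, so Kac's 1980 theorem does not cover the root clause for looped quivers; you need his later extension to quivers with loops.

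Two smaller repairs remain.
\begin{itemize}
\item In~(i) the count must read $\dim\Rep(Q,\bm d)=\dim G_{\bm d}-q_Q(\bm d)\ge\dim G_{\bm d}$. This strictly exceeds the maximal orbit dimension $\dim G_{\bm d}-1$; your ``$\ge\dim G_{\bm d}-1$'' leaves room for a dense orbit. Krull--Schmidt is also needed to turn infinitely many orbits into infinitely many indecomposables.
\item In~(iii), $q_Q(\bm d)<0$ does not by itself force \emph{indecomposables} to need two parameters. Already at $2\delta$ on a Euclidean quiver, the modules $R_\lambda\oplus R_\mu$ form a two-parameter family. ``Not tame'' nevertheless follows. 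If $Q$ were tame, $\Rep(Q,\bm d)$ would be a finite union of decomposition-type strata. Each stratum is the image of some $G_{\bm d}\times\mathbb{A}^t$ with fibres of dimension at least $\dim\operatorname{End}(M)\ge t$, so it has dimension at most $\dim G_{\bm d}$. This forces $q_Q(\bm d)\ge 0$.
\end{itemize}
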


\begin{remark}[Which category the classification concerns]
\label{rem:fd-scope}
The algebra $kQ$ is finite dimensional precisely when $Q$ has no oriented cycles, and PAGR deliberately permits schemas that do: inverse pairs, self-relations such as $\mathsf{refines}:M\to M$, and directed cycles among sorts are all natural in a knowledge schema. We therefore state Theorem~\ref{thm:reptype} for $\Rep_k(Q)$, the category of finite-dimensional representations---which is what the representation layer actually parameterizes---rather than for $kQ$ as a finite-dimensional algebra. This is the setting of Kac's theorem, which covers quivers with loops and oriented cycles, and in which the trichotomy depends only on $\bar Q$. Two consequences are worth noting. Orientation is immaterial, so it makes no difference whether a double edge arises from an inverse pair or from two relations in the same direction. And clause~(iii) is quoted in its standard form for finite-dimensional algebras; when $Q$ has an oriented cycle, $kQ$ is infinite dimensional and it is the classification problem for $\Rep_k(Q)$, not for the algebra $kQ$ in Drozd's sense, that is wild. Statements elsewhere in this paper that genuinely require $kQ$ to be finite dimensional are flagged where they occur; the hereditary property used in Section~\ref{sec:diagnostics} is not among them, since path algebras satisfy $\operatorname{gl.dim} kQ\le1$ for every $Q$.
\end{remark}

Two features of this classification matter for schema design. First, representation type depends only on $\bar Q$, so \emph{the orientation of relations is irrelevant}: declaring $\mathsf{authored}:A\to D$ rather than $\mathsf{authoredBy}:D\to A$ cannot change which regime a schema falls into. Second, every Dynkin diagram is a tree, and every affine Dynkin diagram is either a tree or a single cycle. This yields a purely arithmetic test.

\begin{corollary}[Cyclomatic criterion]
\label{cor:cyclomatic}
Let $Q$ be a connected schema quiver with $n=|Q_0|$ sorts and $m=|Q_1|$ primitive relation types, and let $c=m-n+1$ be the cyclomatic number of $\bar Q$. Then:
\begin{enumerate}[leftmargin=*,label=(\roman*)]
    \item finite type requires $c=0$, i.e.\ $m=n-1$, with $\bar Q$ a Dynkin tree;
    \item tame type requires $c\le 1$;
    \item $c\ge 2$---equivalently $m\ge n+1$---forces wild type.
\end{enumerate}
\end{corollary}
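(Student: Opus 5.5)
The plan is to derive all three clauses from Theorem~\ref{thm:reptype}, using one graph-theoretic fact about the diagrams that appear in it. The cyclomatic number $c=m-n+1$ of a connected multigraph is the rank of its cycle space; it is $0$ exactly for trees, and it counts loops and parallel edges as cycles, so a double edge contributes $1$. I would first record which diagrams have which cyclomatic number. Every simply-laced Dynkin diagram $A_n,D_n,E_6,E_7,E_8$ is a tree, so $c=0$. Every affine diagram $\tilde D_n,\tilde E_6,\tilde E_7,\tilde E_8$ is a tree with $c=0$. The family $\tilde A_n$ is a single cycle with $c=1$; this includes $\tilde A_1$ (the Kronecker double edge) and, by the usual convention, $\tilde A_0$ (the Jordan loop).

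Clause (i) then follows from Theorem~\ref{thm:reptype}(i). Finite type forces $\bar Q$ to be a Dynkin diagram, so $\bar Q$ is a tree and $c=0$. Since $\bar Q$ is connected, $c=0$ is equivalent to $m=n-1$.

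For clause (ii), I take ``tame'' to mean tame but not finite, as in the trichotomy. Theorem~\ref{thm:reptype}(ii) puts $\bar Q$ among the affine diagrams, and the case analysis above gives $c\le1$. To make the clause robust to readers who count finite type as tame, I would note that the combined class ``finite or tame'' also satisfies $c\le 1$, by clause (i).

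Clause (iii) is the contrapositive of the first two clauses, read through the trichotomy. Suppose $c\ge2$. Then $\bar Q$ is neither Dynkin nor affine, because all of those have $c\le1$. By Theorem~\ref{thm:reptype}(iii), $Q$ is wild. Finally, $c\ge 2$ rearranges to $m\ge n+1$.

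The main obstacle is bookkeeping rather than mathematics: the affine list must be checked to contain nothing with $c\ge 2$. Two points need care. First, multigraph edges (double edges and loops) must be counted in $m$. Second, the degenerate affine types $\tilde A_0$ and $\tilde A_1$ must be classified correctly. For both, the cyclomatic number, computed with loops and multi-edges counted, is exactly $1$. I would also note that connectivity is needed for $c=m-n+1$ to equal the cycle rank, and that the converse implications fail: for example, a tree with $c=0$ such as $\tilde D_4$ or the star $T_{2,2,2,2}$ need not be Dynkin. This is why clauses (i) and (ii) are stated as necessary conditions only.
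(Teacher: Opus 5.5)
Your proposal is correct and follows the paper's proof essentially verbatim. Both arguments observe that Dynkin diagrams are trees ($c=0$) and that affine diagrams are either trees ($\tilde D_n$, $\tilde E_6$, $\tilde E_7$, $\tilde E_8$) or a single cycle ($\tilde A_n$, including the double edge $\tilde A_1$ and the loop $\tilde A_0$, so $c\le 1$), and then read off (iii) from the trichotomy of Theorem~\ref{thm:reptype}.
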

\begin{proof}
Dynkin diagrams are trees, so $c=0$; affine Dynkin diagrams are either trees ($\tilde D$, $\tilde E$) or a single cycle ($\tilde A_n$, including $\tilde A_1$ the double edge and $\tilde A_0$ the single loop), so $c\le1$. By Theorem~\ref{thm:reptype} anything else is wild.
\end{proof}

\begin{corollary}[Common schema features force wildness]
\label{cor:wild-features}
Call a \emph{cycle feature} of $Q$ any of the following configurations in $\bar Q$: a self-relation $a:i\to i$; a pair of relation types between the same two sorts, in either orientation---so in particular an inverse pair $a:i\to j$, $b:j\to i$; or an undirected cycle through three or more distinct sorts. Then a connected schema is of wild type as soon as it contains
\begin{enumerate}[leftmargin=*,label=(\roman*)]
\item two \emph{edge-disjoint} cycle features, or
\item three or more relation types between a single pair of sorts.
\end{enumerate}
A \emph{single} cycle feature is not enough: one self-relation alone gives $\tilde A_0$ and one pair of parallel relations alone gives $\tilde A_1$, the Kronecker quiver, both of which have $c=1$ and are tame.
\end{corollary}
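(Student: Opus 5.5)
The plan is to reduce both clauses to the cyclomatic criterion, Corollary~\ref{cor:cyclomatic}(iii). By that corollary it suffices to show $c=m-n+1\ge2$, where $c$ is the cyclomatic number of $\bar Q$, i.e.\ the first Betti number $\dim H_1(\bar Q)$ of the underlying undirected multigraph (loops and parallel edges included). Since $\bar Q$ is connected, $c$ is the number of edges outside a spanning tree, equivalently the dimension of the cycle space over $\mathbb{F}_2$.

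For clause (ii), take three relation types $a_1,a_2,a_3$ between sorts $i$ and $j$. First I will handle the case $i\neq j$. A spanning tree of $\bar Q$ can contain at most one of $a_1,a_2,a_3$, since any two of them form a cycle. So at least two edges lie outside the tree, which gives $c\ge2$. If $i=j$, these are three loops, no loop can lie in a spanning tree, and therefore $c\ge3$.

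For clause (i), I will show that each cycle feature $\Gamma$ supports a nonzero element $z_\Gamma$ of the cycle space whose support is contained in the edge set of $\Gamma$:
\begin{itemize}[leftmargin=*]
\item for a self-relation, $z_\Gamma$ is the loop itself;
\item for a pair of relations between the same two sorts, $z_\Gamma$ is the $2$-cycle they form;
\item for an undirected cycle through at least three distinct sorts, $z_\Gamma$ is that cycle.
\end{itemize}
If $\Gamma_1$ and $\Gamma_2$ are edge-disjoint, then $z_{\Gamma_1}$ and $z_{\Gamma_2}$ are nonzero vectors with disjoint supports. Hence they are linearly independent over $\mathbb{F}_2$, so $\dim H_1(\bar Q)\ge2$, which gives $c\ge2$. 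I will phrase this through the cycle space rather than through spanning trees, because two edge-disjoint cycles can still share vertices, and the spanning-tree count is then less transparent.

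The only step I expect to need care is the meaning of ``edge-disjoint'' and of feature edges in $\bar Q$. Each feature must be read as a set of arrows of $Q$ (i.e.\ edges of $\bar Q$), not as a set of sorts. This matters because two different parallel pairs can share an arrow; that is exactly the configuration of clause (ii), which is why (ii) is stated separately. Once that reading is fixed, the independence argument goes through directly.

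Finally, for the closing sentence of the statement, I will compute $c$ in the two single-feature examples. A single loop on one vertex has $m=1$, $n=1$, so $c=1$, and $\bar Q=\tilde A_0$. The Kronecker quiver has $m=2$, $n=2$, so $c=1$, and $\bar Q=\tilde A_1$. Both are affine diagrams, so Theorem~\ref{thm:reptype}(ii) makes them tame. This shows that a single cycle feature does not by itself force wildness.
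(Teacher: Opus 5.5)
Your proposal is correct and follows the paper's proof essentially step for step: you reduce to Corollary~\ref{cor:cyclomatic}(iii), use linear independence of edge-disjoint cycles in the cycle space of $\bar Q$ for (i), count the non-tree edges among the $k\ge3$ parallel arrows for (ii), and compute $c=1$ for the two single-feature examples. For the closing tameness claim you invoke Theorem~\ref{thm:reptype}(ii) via the identification with $\tilde A_0$ and $\tilde A_1$; this is the sounder justification, since the paper cites Corollary~\ref{cor:cyclomatic}(i)--(ii), which only supplies necessary conditions, and $c\le1$ alone does not imply tameness.
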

\begin{proof}
Each cycle feature is a cycle of $\bar Q$ in the graph-theoretic sense---a loop, a $2$-cycle, or a longer cycle. Edge-disjoint cycles are linearly independent in the cycle space of $\bar Q$, whose dimension is $c$, so case~(i) gives $c\ge2$. In case~(ii), the two sorts and their $k\ge3$ connecting edges already give $c\ge k-1\ge2$. Corollary~\ref{cor:cyclomatic}(iii) then applies. The final claim is Corollary~\ref{cor:cyclomatic}(i)--(ii): a single loop has $n=m=1$ and a single parallel pair has $n=m=2$, so $c=1$ in both cases.
\end{proof}

\begin{remark}[Why the features must be edge-disjoint]
\label{rem:edge-disjoint}
The disjointness hypothesis cannot be dropped, and dropping it is a natural mistake, because two informally stated features can name the same edges. The quiver $i\xrightarrow{a}j$, $j\xrightarrow{b}i$ is at once ``an inverse pair'' and ``two distinct relation types between the same pair of sorts,'' yet these are one cycle feature and not two: representation type depends only on $\bar Q$, in which both descriptions collapse to a single double edge with $c=1$. The quiver is $\tilde A_1$ and tame.
\end{remark}

The practical reading is direct. A schema with a single self-relation and nothing else---the worked example's $\mathsf{refines}:M\to M$ in isolation---is tame, being $\tilde A_0$, whose indecomposables are Jordan blocks. A schema with one inverse pair and nothing else is $\tilde A_1$, the Kronecker quiver, also tame; the same holds for two distinct relation types between the same pair of sorts, since representation type depends only on $\bar Q$ and both cases yield a double edge. Wildness requires $c\ge2$, so by Corollary~\ref{cor:wild-features} it is combinations that force it: an inverse pair \emph{together with} a self-relation on a disjoint set of arrows, three or more relation types between one pair of sorts, or any two edge-disjoint cycle features. The criterion is mild: by Corollary~\ref{cor:cyclomatic} a connected schema need only carry one more primitive relation type than it has entity sorts. Many schemas will satisfy it, but we deliberately make no claim about how many. Establishing such a claim would require a stated counting convention for sorts and primitive relation types, applied to a sample of published schemas, and we have not carried out that study; nor does anything in PAGR require the relation vocabulary to outgrow the sort vocabulary, since a schema may keep a small set of primitive relations and carry its semantic variation in attributes, Horn rules, or evidence metadata instead. What the scope theorem supplies is a \emph{decision procedure} rather than a census: given $Q$, compute $c$ and read off which classical tools are legitimately available.

\begin{remark}[This is a delimitation, not a defect]
\label{rem:wild-scope}
Theorem~\ref{thm:reptype} does not weaken PAGR; it tells us which tools are unavailable and thereby which are worth building. Three consequences are worth recording. \emph{(a)} In the wild regime no tractable Dynkin- or affine-style classification of the indecomposables is to be expected, so the representation layer must be learned and validated empirically rather than selected from a classification---this is the formal reason the hypotheses of Section~\ref{sec:hypotheses} are stated as experiments. \emph{(b)} Auslander--Reiten theory and quiver-Grassmannian techniques remain perfectly well defined in the wild regime; what fails is the manageable global classification of indecomposables that makes them a practical design tool, so results obtained for Dynkin or affine schemas should not be transported to ours without checking that they survive. \emph{(c)} Wildness concerns the classification of \emph{all} representations and says nothing about a given one. Invariants of a particular learned $\rho$---its endomorphism algebra, its direct-sum decomposition, its kernels and images---remain computable, which is precisely the scope of Section~\ref{sec:diagnostics}.
\end{remark}

\begin{remark}[Scope of the theorem under path equations]
\label{rem:reptype-bound}
Theorem~\ref{thm:reptype} classifies $kQ$, i.e.\ the hereditary case $J=0$. Imposing path equations can only cut down the module category, so a quotient $\A_Q=kQ/J$ may have strictly smaller representation type than $kQ$. The representation type of a bound quiver algebra is \emph{not} determined by $\bar Q$ alone and its classification is substantially more delicate; moreover the standard theory presumes $J$ admissible, which Remark~\ref{rem:admissible} shows some natural PAGR constraints violate. We therefore state the scope theorem for the unconstrained representation layer, which is the regime in which relation operators are actually trained, and make no claim about $\A_Q$.
\end{remark}

\subsection{Representation-theoretic diagnostics}
\label{sec:diagnostics}
For small schemas, one may study kernels, images, invariant subspaces, or decompositions of composite maps to identify redundant relation channels or bottlenecks. By Remark~\ref{rem:wild-scope}(c) these remain available in wild type, since each is an invariant of a fixed $\rho$ rather than a classification of all of them. Homological tools such as $\operatorname{Ext}$ become relevant when there is an actual module-theoretic question---for instance, $\operatorname{Ext}^1(M,N)$ classifies the extensions of $M$ by $N$ and therefore measures the obstruction to a decomposition being a direct sum---not merely because a graph exists.

The consistency dimension $\sigma_\rho(H)$ of Section~\ref{sec:sheaf-cohomology} provides a global diagnostic computable on the retrieved subgraph without access to the full certified closure: it reports the dimension of the space of globally consistent entity-state assignments, and $\sigma_\rho(H)=0$ is a certificate that no reassignment of entity states can render the retrieved context consistent.

A further homological tool operates at the level of the algebra itself. The \emph{Hochschild cohomology} $HH^*(A_Q)$ of the schema algebra $A_Q=kQ/J$ governs the infinitesimal deformations of its representations: $HH^2(A_Q)$ classifies first-order algebra deformations, and for a specific representation $\rho$ the obstruction to deforming $\rho$ as an $A_Q$-module is controlled by $\Ext^2_{A_Q}(M_\rho,M_\rho)$, where $M_\rho$ is the module defined by $\rho$. In the hereditary case $J=0$ the global dimension of $kQ$ is $1$, so $\Ext^n_{kQ}(M,N)=0$ for all $n\ge2$ and all modules $M,N$. In particular $\Ext^2_{kQ}(M_\rho,M_\rho)=0$, so the deformation problem for any single representation is \emph{unobstructed}: every first-order deformation extends to all orders. (Formal rigidity, the stronger condition $\Ext^1(M_\rho,M_\rho)=0$, does not hold in general.) The corresponding statement about the algebra is a separate one and does not follow from the global dimension, since $HH^n(A)=\Ext^n_{A^e}(A,A)$ is computed over the enveloping algebra $A^e=A\otimes_k A^{\mathrm{op}}$ rather than over $A$. It nonetheless holds here: a path algebra admits a two-term projective bimodule resolution, so its Hochschild dimension is at most $1$ and $HH^n(kQ)=0$ for $n\ge2$ \citep{happel1989}, meaning $kQ$ has no nontrivial infinitesimal algebra deformations either. This is a theorem the framework inherits for free whenever path equations are absent, and it justifies treating unconstrained relation operators as continuously trainable without algebraic obstruction. When $J\neq0$ the vanishing may fail: $\Ext^2_{A_Q}(M_\rho,M_\rho)$ supplies a nonzero obstruction space, permitting \emph{obstructed parameter regimes} in which relation operators may resist simultaneous consistent training; whether an obstruction is actually met depends on whether a particular obstruction class is nonzero---a structural precursor to the compositional generalization gaps of Hypothesis H1 in Section~\ref{sec:hypotheses}.

\section{Learning Without Corrupting Ground Truth}

The architecture supports learning at several levels.

\subsection{Learned entity embeddings}
Train or freeze $\phi_i$ using text, images, tabular features, or multimodal encoders.

\subsection{Learned relation operators}
Given positive certified triples $(x,a,y)$, fit $A_a$ by minimizing a loss such as
\[
\Lcal_{\mathrm{rel}}=\sum_{(x,a,y)\in E}\ell(A_a z_x,z_y)+\Lcal_{\mathrm{neg}}.
\]
Add algebraic constraints $\Lcal_{\mathrm{alg}}$ and sheaf energy regularization.

\subsection{Joint objective}
A general objective is
\[
\Lcal=
\lambda_1\Lcal_{\mathrm{retr}}
+\lambda_2\Lcal_{\mathrm{rel}}
+\lambda_3\Lcal_{\mathrm{alg}}
+\lambda_4\Lcal_{\mathrm{sheaf}}
+\lambda_5\Lcal_{\mathrm{cal}}.
\]
Here $\Lcal_{\mathrm{cal}}$ may calibrate hypothesis scores against held-out certified edges.

\subsection{Hypothesis queue}
High-scoring predicted edges enter a separate queue
\[
H=\{(x,a,y,s): s\ge \tau\},
\]
where they can be used for discovery, active learning, or evidence acquisition. They remain outside $G^{\star}$ until admitted by source evidence or a symbolic proof.

This separation permits aggressive learning without contaminating certified context.

\section{Ground-Truth Establishment as an Evidence Protocol}
\label{sec:evidence}

A retrieval system cannot mathematically manufacture truth from text. It can, however, make the establishment of its \emph{internal} ground-truth set explicit.

\begin{definition}[Evidence object]
An evidence object for a base fact $f$ is
\[
\epsilon_f=(\mathrm{sourceID},\mathrm{locator},\mathrm{spanHash},\mathrm{extract},\mathrm{timestamp},\mathrm{validator}),
\]
where the locator identifies the relevant source region and the hash binds the admitted fact to a stable source representation.
\end{definition}

\begin{definition}[Admission predicate]
$\operatorname{Admit}(f,\epsilon_f)\in\{0,1\}$ is a deterministic policy deciding whether evidence is sufficient for $f$ to enter $E$.
\end{definition}

The policy may be domain-specific---peer-reviewed publication, signed database record, official API response, human curator approval, consensus threshold, or cryptographic attestation---but the mathematical framework is invariant to that choice.

\begin{principle}[Ground truth is protocol-relative]
\label{prin:protocol}
The certified base $E$ is not ``all truths in the world.'' It is the set of assertions admitted under an explicit evidence protocol. Reproducibility requires versioning both $E$ and the admission policy.
\end{principle}

\section{Contradiction and Paraconsistent Extensions}

Real knowledge bases may contain conflicting sources. Classical closure over an inconsistent theory can be problematic if arbitrary negation is allowed. The positive Horn fragment used above avoids explosion because it contains no unrestricted classical negation. For domains requiring explicit contradiction, one may maintain signed predicates $R^{+}$ and $R^{-}$ or adopt a paraconsistent truth lattice.

A simple four-valued status for an atom $f$ is Belnap's bilattice \citep{belnap1977}
\[
\{\mathbf{T},\mathbf{F},\mathbf{B},\mathbf{N}\},
\]
meaning supported true, supported false, both, or neither. Geometry may rank evidence on either side, while the generator must expose the conflict rather than collapse it to one scalar probability.

This is an extension rather than a required part of the core PAGR framework.

\section{Retraction and Non-Monotonic Maintenance}
\label{sec:retraction}

The closure of Section~\ref{sec:certified} is monotone, so it models a knowledge base that only grows. Real corpora do not behave this way: sources are corrected, superseded, withdrawn, or found to have been misextracted, and an admitted base fact may have to be removed. Because Design Principle~\ref{prin:no-promotion} forbids repairing this by re-scoring, the framework needs an explicit retraction mechanism, and provenance supplies one.

Suppose $e\in E$ is retracted. The facts that must be reconsidered are exactly those whose provenance polynomial mentions $x_e$; a fact $f$ survives if and only if $\Prov(f)$ has a monomial none of whose variables has been retracted, i.e.\ if $\Prov(f)$ evaluated with $x_e\mapsto0$ and all surviving generators mapped to $1$ is nonzero in the Boolean semiring. Provenance thus doubles as a truth-maintenance index: no re-derivation from scratch is needed, and the surviving derivations are exactly the certificates that remain valid.

Two formal routes are available for the general case. One may move from the polynomial semiring $\mathbb{N}[X]$ to an $m$-semiring equipped with a monus operator, which is what is required for a difference operation on annotated relations to be well behaved \citep{geerts2010}; or one may keep the positive semiring and treat retraction as recomputation of the least fixed point over the reduced base, using the provenance index only to bound which facts can possibly be affected. The second is simpler and adequate whenever retractions are rare relative to queries, which is the common regime.

\begin{principle}[Retraction is an evidence event]
\label{prin:retraction}
Removing a fact from $E$ is an action of the admission policy, not of the retrieval system. Its effect on $\Cl_{\T}(E)$ is determined by provenance, and the recomputation preserves both Theorem~\ref{thm:noninterference} and Corollary~\ref{cor:soundness}, since both are stated relative to whatever $E$ currently is.
\end{principle}

Versioning $E$ and the admission policy, as Design Principle~\ref{prin:protocol} already requires, is what makes a retraction auditable after the fact. Design Principle~\ref{prin:redundancy} is the counterpart on the other side: redundant base facts are precisely what makes a certified closure robust to retraction, which is why $E$ should not be minimized in production.

\section{Temporal and Dynamic Knowledge}

Let facts be indexed by validity intervals or event time. Replace $E_a$ by
\[
E_a\subseteq P_i\times P_j\times\mathbb{T}.
\]
Rules become temporal Horn rules, and provenance certificates inherit temporal constraints. A query at time $t$ retrieves from a temporal slice or interval algebra. The same epistemic separation continues to apply: learned temporal embeddings can predict changes, but predictions remain hypotheses.

\section{Complexity and Computational Boundaries}

\subsection{Symbolic closure}
For a fixed finite rule set $\T$ and finite $P$, naive saturation terminates but may be expensive. The relevant classical bounds are standard \citep{abiteboul1995}: evaluating a fixed positive Datalog program is \textsc{PTime}-complete in data complexity, and for a program whose rule bodies contain at most $v$ distinct variables the closure is computed in $O(|\T|\cdot|P|^{v})$ time by naive saturation. Semi-naive evaluation avoids re-deriving old facts and reduces the constant substantially; combined complexity is \textsc{ExpTime}-complete, which is why the rule set should be treated as fixed and small. In practice, materialization can be partial and query-driven.

\subsection{Bounded path retrieval}
Let $\mathcal{A}$ be a deterministic automaton for $L$. Evaluating the bounded regular-path query $\Reach_{L,h}$ amounts to a breadth-first search of depth $h$ in the product of the certified graph with $\mathcal{A}$, costing $O(h\cdot|\mathcal{A}|\cdot|\Cl_{\T}(E)|)$ edge traversals from a single seed and hence $O(k\cdot h\cdot|\mathcal{A}|\cdot|\Cl_{\T}(E)|)$ for $k$ seeds. Unbounded regular-path query evaluation is \textsc{NL}-complete in data complexity; the hop bound $h$ is what keeps the practical cost linear in the certified edge count and simultaneously controls semantic drift. Note the direct tension with Corollary~\ref{cor:attributable}: raising $h$ or widening $L$ removes horizon and grammar misses at exactly this linear cost. The bisimulation index of Section~\ref{sec:bisim} replaces $|\Cl_{\T}(E)|$ by $|G_h|$ in the pruning phase, at a one-off preprocessing cost of $O(h\,|\Cl_{\T}(E)|)$ and the price of a validation pass.

\subsection{Index construction and base analysis}
Computing $\approx_h$ is $O(h\,|\mathcal{E}|)$ by partition refinement, or $O(|\mathcal{E}|\log|P|)$ for the full bisimulation \citep{paige1987}. Redundancy testing over the base (Proposition~\ref{prop:prov-redundant}) is a semiring evaluation per fact, linear in the size of the stored provenance; computing a \emph{minimum} base is NP-hard (Proposition~\ref{prop:min-np}), so only the greedy variant is practical.

\subsection{Representation scoring}
A path operator $A_p$ requires $|p|$ matrix-vector applications. When relation operators are sparse, orthogonal, diagonal, block-structured, or low-rank, costs can be substantially reduced.

\subsection{Sheaf energy}
On a retrieved subgraph, computing $\delta x$ is linear in incidences times the local restriction-map cost. Full spectral decomposition of $L_{\F}$ is unnecessary for simple consistency scoring.

\section{Evaluation Framework}
\label{sec:evaluation}

A PAGR system should not be evaluated by answer accuracy alone. We propose seven independent axes: six measuring properties of the retrieval system itself, and one measuring the ingestion policy on which soundness depends.

\begin{longtable}{p{0.18\textwidth}p{0.31\textwidth}p{0.41\textwidth}}
\toprule
Axis & Question & Example metric \\
\midrule
\endhead
Semantic recall & Did geometry find the right region? & Recall@$k$ of evidence-bearing seed neighborhoods, decomposed into seed / horizon / grammar misses per Corollary~\ref{cor:attributable}. \\
Symbolic precision & Are admitted compositions sound? & Precision of derived relations under expert or held-out validation. \\
Proof coverage & How much retrieved certified context has machine-checkable witnesses? & Fraction of certified outputs with valid certificates; target $1$. \\
Representation coherence & Does learned geometry respect schema equations? & Mean normalized path-equation residual. \\
Local consistency & Do retrieved representations glue under the sheaf? & Distribution of $\Energy_{\F}$ over retrieved neighborhoods; consistency dimension $\sigma_\rho(H)=\dim H^0(H;\F^{\rho}|_H)$ (Section~\ref{sec:sheaf-cohomology}) as the dimension of the space of globally consistent configurations. \\
Generation faithfulness & Does the LLM state only what the context supports? & Claim-level entailment/attribution evaluation \citep{gao2023}, separate from citation presence. \\
Attestation fidelity & Do admitted base facts actually appear in their cited spans? & Attestation error rate of $\operatorname{Admit}$ on a held-out audited sample (Section~\ref{sec:extraction}). \\
\bottomrule
\end{longtable}

The first six axes are properties of the retrieval system and can be measured without inspecting the ingestion pipeline. The seventh, attestation fidelity, is a property of the admission policy and is the empirical content of Assumption Source Correctness: reporting it is what prevents Corollary~\ref{cor:soundness} from being read as a stronger guarantee than it actually provides.

Additional ablations should remove geometry, algebraic constraints, sheaf consistency, provenance, and graph diffusion separately. The key empirical hypothesis is that the layers are complementary: geometry should improve recall, while proof-carrying symbolic constraints should protect precision.

\section{Proposed Research Hypotheses}
\label{sec:hypotheses}

The framework yields experimentally testable hypotheses. Each is stated so that a negative result is informative; H1, H2, and H5 are cheap enough to run on a standard benchmark such as FB15k-237 or WN18RR without building a full retrieval stack, and we regard them as the minimum evidence the framework should be asked to produce.

\paragraph{H1: Algebraic regularization improves compositional retrieval.}
Relation operators trained with path-equation constraints should generalize better to unseen multi-hop compositions than unconstrained operators.

\paragraph{H2: Sheaf energy predicts retrieval incoherence.}
High $\Energy_{\F}$ neighborhoods should correlate with contradictory or semantically unstable retrieval contexts. Proposition~\ref{prop:section-path-eq} gives this hypothesis a mechanism to test rather than a correlation to hope for: energy should track localized path-equation violation, so the two quantities should be measured jointly and their correlation reported.

\paragraph{H3: Epistemic separation improves factual precision at fixed recall.}
Allowing learned hypotheses to influence ranking but not certification should preserve discovery benefits while reducing unsupported generated claims.

\paragraph{H4: Type-specific geometry dominates a single global metric in heterogeneous graphs.}
Separate spaces $\M_i$ and typed maps $A_a$ should outperform a single homogeneous embedding space when relation signatures differ substantially.

\paragraph{H5: Gauge-invariant scores improve reproducibility across equivalent parameterizations.}
Retrieval based on invariant residuals should be stable under isometric basis changes. Remark~\ref{rem:gauge-gap} sharpens this into a falsifiable pair of predictions: rankings should be exactly invariant under $\Iso(\bm d)$ and should \emph{measurably drift} under general $G_{\bm d}$ base changes unless operators are orthogonally parameterized. Observing drift under $G_{\bm d}$ is a confirmation, not a bug, and its magnitude quantifies how much a moduli-space account of the representation layer overstates the true invariance.

\section{Worked Abstract Example}

Consider three entity types
\[
\mathsf{Document},\quad \mathsf{Claim},\quad \mathsf{Method}
\]
and primitive arrows
\[
\mathsf{states}:D\to C,\quad
\mathsf{supports}:C\to C,\quad
\mathsf{uses}:C\to M,
\quad
\mathsf{refines}:M\to M.
\]
Suppose the symbolic theory contains the valid inclusion
\[
\mathsf{supports};\mathsf{uses}\subseteq\mathsf{uses}
\]
but deliberately omits
\[
\mathsf{uses};\mathsf{refines}^{-1}\subseteq\mathsf{uses},
\]
which might be unsound depending on the semantics of refinement.

A query is embedded and retrieves claim $c_0$. The certified graph expands from $c_0$ through $\mathsf{supports};\mathsf{uses}$ to method $m$. The result carries a proof tree with two source-backed leaves. Meanwhile, the quiver representation predicts that another method $m'$ has low residual under a learned composite operator, and the sheaf layer finds that inserting $m'$ would lower local energy. The system may therefore rank $m'$ as an excellent \emph{hypothesis} for further evidence search. It may not state that $c_0$ uses $m'$ as a certified fact until a source or symbolic derivation admits it.

This example illustrates the architecture's central asymmetry: continuous mathematics proposes; discrete proof certifies.

\section{What Is New and What Is Not}

\subsection{Established ingredients}
The following are not claimed as novel: quivers and path algebras; representations of quivers; category-theoretic schemas; Horn/Datalog fixed-point semantics; semiring provenance; KGE relation operators; mixed geometry; cellular sheaves; sheaf Laplacians; graph diffusion; or proof-carrying verification patterns. All have substantial prior literatures \citep{spivak2012,green2007,necula1997,gebhart2023,ganev2022,abiteboul1995,hansen2019spectral,jeh2003}.

\subsection{What concurrent work already covers}
Honesty about novelty requires naming the nearest neighbors rather than only the ancestors. Three claims that a reader might expect us to make are \emph{not} available. First, proof-carrying certificates attached to LLM pipeline outputs are not new; \citet{koomullil2026} develop a kernel-checkable version. Second, attributable symbolic execution traces over retrieved evidence are not new; \citet{gann2026} produce them from synthesized Prolog modules and report benchmark gains. Third, using personalized PageRank over a knowledge graph for multi-hop retrieval is not new \citep{gutierrez2024}. What remains, and what Section~\ref{sec:concurrent} argues is the actual gap, is that none of these separates the \emph{grammar of admissible composition} from the \emph{geometry of proposal}, and none states an invariance property over a design space of learned components.

\subsection{Proposed synthesis}
The research contribution is the combined architecture and the sharp division of epistemic responsibilities:
\begin{enumerate}[leftmargin=*]
    \item a symbolic knowledge layer generated by source-attested facts and sound closure;
    \item a quiver-algebraic representation layer in which relation types act as typed operators and path identities become structural constraints;
    \item a metric/mixed-geometry layer used for semantic discovery;
    \item a sheaf layer used for local-to-global consistency;
    \item provenance and proof objects used as an admissibility condition for certified retrieval, and as the index for retraction;
    \item an explicit non-interference theorem (Theorem~\ref{thm:noninterference}) guaranteeing that arbitrary replacement of learned retrieval components cannot introduce an uncertified fact into the certified channel, together with a matching completeness bound (Theorem~\ref{thm:completeness}) that attributes every recall failure to one of three named causes;
    \item a coupling result (Proposition~\ref{prop:section-path-eq}) showing that the sheaf layer's consistency energy is a localized measurement of the representation layer's compositional constraints, so the two layers are not parallel decorations;
    \item a scope theorem (Section~\ref{sec:reptype}) delimiting which representation-theoretic tools are available---essentially every practical schema is of wild type---and minimality results for the retrieval operator itself: a bounded-bisimulation index that exactly decides admissible-word realizability, and a provenance-based characterization of redundant base facts.
\end{enumerate}

We call this design \emph{proof-carrying geometric retrieval}. Its novelty, if validated empirically, is architectural rather than foundational: modern algebraic and geometric machinery is arranged so that statistical power and epistemic authority are intentionally non-coincident.

\section{Limitations}

First, symbolic soundness is only relative to source correctness and rule validity. The framework cannot prove that a corrupted source or mistaken curator is correct, and Section~\ref{sec:extraction} makes clear that in an LLM-ingested corpus this assumption is doing a great deal of work; the attestation error rate is the honest measure of how much. Second, constructing a high-quality typed schema and rule set is costly, and this cost is the framework's main barrier to adoption relative to schema-free GraphRAG pipelines \citep{edge2024,gutierrez2024}. Third, learned representations may add limited value when the graph is tiny or relations are already exhaustive. Fourth, sheaf construction may require learned or curated restriction maps; Definition~\ref{def:induced-sheaf} removes this freedom by tying restrictions to $\rho$, but a system that instead learns them freely reacquires the problem that poor restrictions yield misleading consistency signals. Fifth, recursive provenance can become large and requires compact DAGs, absorptive semantics, or bounded proof policies. Sixth, exact path-algebra constraints may be inappropriate where relations are intrinsically noisy; regularization rather than equality is then preferable.

Seventh, and most importantly for a reader deciding whether to build on this: the paper is entirely theoretical. Every claim about the \emph{benefit} of the architecture---that geometry improves recall, that algebraic regularization helps compositional generalization, that sheaf energy flags incoherence---is a conjecture stated in Section~\ref{sec:hypotheses}, not a result. Only the structural claims (Theorems~\ref{thm:noninterference}, \ref{thm:completeness}, \ref{thm:gauge}, Proposition~\ref{prop:section-path-eq}) are established, and those are established by construction rather than by evidence about the world. The framework should be read as a specification and a set of testable predictions, and it should not be cited as showing that proof-carrying geometric retrieval outperforms anything.

Finally, the framework does not imply that every useful knowledge graph should be represented by a finite-dimensional linear quiver representation. Nonlinear maps, probabilistic kernels, enriched categories, operator algebras, or higher-order relational structures may be better in particular domains. The quiver representation is a disciplined baseline, not a universal theorem about cognition.

\section{Future Directions}

Several mathematical extensions are natural.

\paragraph{Enriched and quantaloid semantics.}
Because $\Rel$ is locally ordered, one can formulate the schema in an order-enriched categorical setting where inclusions are native $2$-cells. Weighted relations suggest enrichment over quantales, potentially unifying symbolic reachability and graded costs.

\paragraph{Nonlinear quiver representations.}
Replace $\Vect$ by smooth manifolds and relation arrows by differentiable maps, or by Markov kernels for probabilistic semantics. Composition remains functorial while permitting nonlinear geometry.

\paragraph{Derived and homological diagnostics.}
The consistency dimension $\sigma_\rho(H)$ (Section~\ref{sec:sheaf-cohomology}) and the Hochschild cohomology sketch of Section~\ref{sec:diagnostics} open a systematic homological program. Computing $HH^*(A_Q)$ for concrete schemas via the bar resolution yields explicit generators that bound the deformation space of any trained representation and predict which regions of parameter space are compatible with the declared path equations. Derived categories $D^b(\mathrm{mod}\text{-}A_Q)$ provide the natural ambient category: $\Ext^n_{A_Q}(M,N)\cong\Hom_{D^b}(M,N[n])$, and $\Ext^1_{A_Q}(M_\rho,M_\rho)$ parameterizes the first-order deformation directions of $M_\rho$, with obstructions to integrating them controlled by $\Ext^2_{A_Q}(M_\rho,M_\rho)$; a nonzero first-order class supplies a concrete direction in operator space that could guide regularization or generate adversarial test cases for compositional generalization. Higher persistent sheaf cohomology, indexing $\sigma_\rho(H)$ across filtrations of the retrieved subgraph by evidence score or hop depth, could track how the space of consistent configurations collapses as context is expanded---a topological analogue of precision-recall curves with a certificate rather than a threshold.

\paragraph{Persistent and topological retrieval.}
Filtrations by score, hop count, or evidence threshold induce filtered complexes. Persistent homology could identify retrieval structures stable across thresholds, complementing the simple top-$k$ margin theorem.

\paragraph{Certified neural-symbolic training.}
One may train relation operators and sheaf restrictions against certified derivations while reserving symbolic proof checking as an independent verifier, analogous to a prover--checker split.

\paragraph{Evidence acquisition agents.}
Hypothesis edges with high geometric score but no proof can trigger tools that search external sources. The resulting evidence passes through the same admission predicate before promotion. This provides a principled route from statistical discovery to certified knowledge growth.

\section{Conclusion}

A grounded retrieval system should be more than a graph whose nodes happen to have embeddings. Geometry, algebra, logic, and provenance answer different questions. When these questions are made explicit, a coherent architecture emerges.

The symbolic layer defines which facts are certified and which compositions are sound. The quiver and path algebra define typed compositional structure. Quiver representations turn that structure into learnable operators while preserving type and composition information. Metric and mixed-geometry spaces support semantic discovery. Cellular sheaves quantify whether local representations are mutually compatible. Semiring provenance and proof objects explain derivations. An LLM sits downstream of these structures and synthesizes their output rather than silently redefining them.

The resulting principle is simple:
\[
\boxed{
\text{geometry proposes}
\;\longrightarrow\;
\text{algebra structures}
\;\longrightarrow\;
\text{proof certifies}
\;\longrightarrow\;
\text{generation communicates}
}
\]

This separation does not reduce the role of learning. It makes learning safer to use aggressively: retrieval and hypothesis generation may exploit sophisticated geometric models precisely because the architecture prevents latent similarity from being mistaken for ground truth. We suggest that this is a productive mathematical foundation for the next generation of graph-based retrieval systems.

\bibliographystyle{plainnat}
\bibliography{pagr}

\appendix
\section{Notation Summary}
\begin{longtable}{p{0.2\textwidth}p{0.7\textwidth}}
\toprule
Symbol & Meaning \\
\midrule
$Q$ & finite schema quiver \\
$Q_0,Q_1$ & entity sorts and primitive relation types \\
$kQ$ & path algebra of $Q$ over field $k$ \\
$J$ & ideal generated by path equations \\
$\A_Q=kQ/J$ & quotient path algebra (see Remark~\ref{rem:admissible}) \\
$\mathcal{E}_J$ & declared path-equation pairs generating $J$ \\
$P=\bigsqcup_i P_i$ & typed entity universe; $P_i$ are entities of sort $i$ \\
$E$ & source-attested extensional relation base \\
$\T$ & positive Horn relational rule set \\
$\Cl_{\T}(E)$ & least certified closure \\
$\At(\mathfrak{M})$ & atoms satisfied by the intended model $\mathfrak{M}$ \\
$\M_i$ & type-specific metric/geometric space \\
$\phi_i,\psi_i$ & entity and query encoders into $\M_i$ \\
$V_i$ & vector space assigned to sort $i$ in quiver representation \\
$A_a$ & linear operator assigned to relation arrow $a$ \\
$G_{\bm d},\Iso(\bm d)$ & base-change group and its isometry subgroup \\
$\F,\F^{\rho}$ & cellular sheaf; the sheaf induced by $\rho$ \\
$C^0(G;\F),C^1(G;\F)$ & vertex-stalk and edge-stalk cochain groups \\
$L_{\F}$ & sheaf Laplacian \\
$H^0(G;\F)$ & zeroth sheaf cohomology (global sections, $=\ker\delta$) \\
$H^1(G;\F)$ & first sheaf cohomology ($=C^1/\operatorname{im}(\delta)$; a topological residual, not a consistency diagnostic---see Remark~\ref{rem:h1-not-obstruction}) \\
$\sigma_\rho(H)$ & consistency dimension $\dim H^0(H;\F^{\rho}|_H)$ of retrieved subgraph $H$ \\
$HH^n(A_Q)$ & $n$-th Hochschild cohomology of the schema algebra $A_Q$ \\
$\Prov(f)$ & provenance object for fact $f$ \\
$\varphi$ & scalarization of a provenance object for ranking \\
$G^{\star}$ & certified internal knowledge set \\
$\theta,\Theta$ & learned configuration and its design space \\
$W$ & column-stochastic diffusion matrix (Stage V) \\
$h,k$ & hop bound and seed count \\
\bottomrule
\end{longtable}

\section{Minimal Machine-Checkable Invariants}
\label{app:invariants}
A practical implementation should enforce at least the following invariants. Invariants 1--5 are together sufficient for Theorem~\ref{thm:noninterference}; the remainder are hygiene conditions that make the paper's diagnostics meaningful.
\begin{enumerate}[leftmargin=*]
    \item Every base fact is well typed under $Q$ and resolves to an evidence object admitted by a policy independent of the extractor that proposed it (Design Principle~\ref{prin:attestation}).
    \item Every derived certified fact has a valid derivation DAG whose internal nodes instantiate rules in $\T$.
    \item Path-equation constraints in $J$ are distinguished from Horn inclusions in $\T$.
    \item Learned edges are stored in a hypothesis relation namespace separate from certified relations.
    \item Retrieval output preserves epistemic status through ranking and serialization.
    \item\label{prin:gauge-log} Any relation-representation score used for ranking is invariant to the declared gauge group, and the declared group is recorded; per Remark~\ref{rem:gauge-gap} it is $\Iso(\bm d)$ and not $G_{\bm d}$ unless operators are orthogonally parameterized.
    \item Cross-type seed scores are calibrated to a common scale before top-$k$ selection (Definition~\ref{def:calibrated}).
    \item The query seed margin is logged when robustness claims depend on top-$k$ stability.
    \item Recall failures are attributed to seed, horizon, or grammar misses (Corollary~\ref{cor:attributable}).
    \item Sheaf energy is reported as a consistency statistic, never as factual confidence.
    \item Retraction of a base fact triggers provenance-indexed invalidation of every certificate depending on it (Section~\ref{sec:retraction}), and evidential multiplicity is reported rather than minimized away (Design Principle~\ref{prin:redundancy}).
    \item Any bisimulation index used to accelerate Stage~II is treated as a sound over-approximation and its results validated against the certified graph (Remark~\ref{rem:index-caveat}).
    \item Generator citations are checked for containment in the supplied context, while claim-level source faithfulness is evaluated separately.
\end{enumerate}

\section{Alternative Categorical Formulation}
One can package parts of the framework categorically. Path equations generate a category $\mathcal{C}=\Path(Q)/\sim_J$. A strict linear representation is a functor
\[
\rho:\mathcal{C}\to\Vect_k.
\]
A relational interpretation is a functor into $\Rel$ only for equational constraints. Horn inclusions require the local order on $\Rel$ to be used explicitly: a rule $p\subseteq q$ is interpreted as a $2$-cell
\[
I(p)\Rightarrow I(q)
\]
in the order-enriched sense. Thus the most faithful categorical home for the combined schema is not an ordinary $1$-category alone but an order-enriched or locally posetal setting. This explains why a plain ``finitely presented category'' captures path identities but not all one-way inference rules.

\section{Relationship Between Quiver Representations and Sheaves}
Both constructions attach vector spaces and maps to combinatorial data, but they serve different roles here. A quiver representation attaches spaces to schema vertices and maps to \emph{relation types}; many concrete entity edges share the same typed operator. A cellular sheaf attaches spaces and restrictions to \emph{individual graph cells or incidences}; different entity edges may impose different compatibility maps. The former is therefore a global typed parameterization, while the latter is a local compatibility system.

Definition~\ref{def:induced-sheaf} exhibits one canonical coupling: the sheaf $\F^{\rho}$ in which every edge inherits its restriction from the relation operator of its label. Under that tying the two structures carry the same information, and Proposition~\ref{prop:section-path-eq} shows the consequence---sections of $\F^{\rho}$ satisfy the declared path equations on their support. The general case is strictly larger. A free cellular sheaf can assign different restrictions to two edges bearing the same relation label, which lets it model edge-specific context that no representation of $Q$ can express, at the cost of losing the guarantee that consistency implies compositional coherence. The design choice is therefore a real one: tying restrictions buys Proposition~\ref{prop:section-path-eq}, freeing them buys expressiveness. A hybrid in which $\F_{u\unlhd e}=A_{\lambda(e)}+\Delta_e$ with $\|\Delta_e\|$ penalized interpolates between the two, and the penalty weight is the knob controlling how much the guarantee degrades. We flag this as the most concrete open modeling question raised by the framework.

\end{document}